\newtheorem{definition}{Definition}
\newtheorem{claim}{Claim}
\newtheorem{theorem}[definition]{Theorem}
\newtheorem{lemma}[definition]{Lemma}
\newtheorem{conjecture}[definition]{Conjecture}
\newcommand{\comment}[1]{}
\title{Exhaustive generation of $k$-critical $\mathcal H$-free graphs}
\author{Jan Goedgebeur\thanks{Jan Goedgebeur, 
\texttt{jan.goedgebeur@ugent.be},
Department of Applied Mathematics, Computer Science \& Statistics, Ghent University, Krijgslaan 281-S9, 9000 Ghent, Belgium}
\and Oliver Schaudt\thanks{Oliver Schaudt, 
\texttt{schaudto@uni-koeln.de},
Institut f\"ur Informatik,
Universit\"at zu K\"oln, K\"oln, Germany}}
\date{}
\begin{document}
\maketitle

\begin{abstract}
We describe an algorithm for generating all $k$-critical $\mathcal H$-free graphs, based on a method of Ho\`{a}ng et al. 
Using this algorithm, we prove that there are only finitely many $4$-critical $(P_7,C_k)$-free graphs, for both $k=4$ and $k=5$.
We also show that there are only finitely many $4$-critical graphs $(P_8,C_4)$-free graphs. For each case of these cases we also give the complete lists of critical graphs and vertex-critical graphs. These results generalize previous work by Hell and Huang,
and yield certifying algorithms for the $3$-colorability problem in the respective classes.

Moreover, we prove that for every $t$, the class of 4-critical planar $P_t$-free graphs is finite.
We also determine all 27 4-critical planar $(P_7,C_6)$-free graphs.

We also prove that every $P_{11}$-free graph of girth at least five is 3-colorable, and show that this is best possible by determining the smallest 4-chromatic $P_{12}$-free graph of girth at least five.
Moreover, we show that every $P_{14}$-free graph of girth at least six and every $P_{17}$-free graph of girth at least seven is 3-colorable.
This strengthens results of Golovach et al. 
\end{abstract}



\section{Introduction}

Given a graph $G$, a \emph{$k$-coloring} is a mapping $c : V(G) \to \{1,\ldots,k\}$ with $c(u) \neq c(v)$ for all edges $uv$ of $G$.
If a $k$-coloring exists for $G$, we call $G$ \emph{$k$-colorable}.
Moreover $G$ is called \emph{$k$-chromatic} if it is $k$-colorable, but not $(k-1)$-colorable.

The graph $G$ graph is called \emph{$k$-critical} if it is $k$-chromatic, but every proper subgraph of $G$ is $(k-1)$-colorable.
For example, the class of $3$-critical graphs equals the family of odd cycles. 
To characterize $k$-critical graphs is a notorious problem in graph theory.

To get a grip on this problem, it is common to consider graphs with restricted structure, as follows.
Let a graph $H$ and a number $k$ be given.
An \emph{$H$-free} graph is a graph that does not contain $H$ as an induced subgraph.
We say that a graph $G$ is \emph{$k$-critical $H$-free} if $G$ is $H$-free, $k$-chromatic, and every $H$-free proper subgraph of $G$ is $(k-1)$-colorable. 
If $\mathcal H$ is a set of graphs, then we say that a graph $G$ is $\mathcal H$-free if $G$ is $H$-free for each $H \in \mathcal H$.
The definition of a \emph{$k$-critical $\mathcal H$-free} graph is analogous.

A notion similar to critical graphs is that of $k$-\emph{vertex-critical} graphs: $k$-chromatic graphs whose every proper induced subgraph is $(k-1)$-colorable.
We define $k$-vertex-critical $H$-free and $k$-vertex-critical $\mathcal H$-free graphs accordingly.
Note that, unlike for critical graphs, the set of $k$-vertex-critical $\mathcal H$-free graphs equals the set of $\mathcal H$-free $k$-vertex-critical graphs.

We remark that every $k$-critical graph is $k$-vertex-critical.
Moreover, as noted by Ho\`ang et al.~\cite{HMRSV15}, there are finitely many $k$-critical $\mathcal H$-free graphs if and only if there are finitely many $k$-vertex-critical $\mathcal H$-free graphs, for any family of graphs $\mathcal H$.

The study of $k$-critical graphs in a particular graph class received a significant amount of interest in the past decade, which is partly due to the interest in the design of certifying algorithms.
Given a decision problem, a solution algorithm is called \emph{certifying} if it provides, together with the yes/no decision, a polynomial time verifiable certificate for this decision.
In case of $k$-colorability for $\mathcal H$-free graphs, a canonical certificate would be either a $k$-coloring or an induced subgraph of the input graph which is (a) not $k$-colorable and (b) of constant size.
However, assertion (b) can only be realized if there is a finite list of $(k+1)$-critical $\mathcal H$-free graphs.

Let us now mention some results in this line of research. 
From Lozin and Kami\'nski~\cite{K-L-col-g} and Kr\`al et al.~\cite{K-K-T-W-col-g} we know that the $k$-colorability problem remains NP-complete on $H$-free graphs, unless $H$ is the disjoint union of paths.
This motivates the study of graph classes in which some path is forbidden as induced subgraph.

Bruce et al.~\cite{BHS09} proved that there are exactly six 4-critical $P_5$-free graphs, where $P_t$ denotes the path on $t$ vertices.
Later, Maffray and Morel~\cite{MM12}, by characterizing the 4-vertex-critical $P_5$-free graphs, designed a linear time algorithm to decide 3-colorability of $P_5$-free graphs.
Randerath et al.~\cite{randerath_04} have shown that the only 4-critical $(P_6,C_3)$-free graph is the Gr\"otzsch graph.
More recently, Hell and Huang~\cite{hell_14} proved that there are four 4-critical $(P_6,C_4)$-free graphs.
They also proved that in general, there are only finitely many $k$-critical $(P_6,C_4)$-free graphs.

In a companion paper, we proved the following dichotomy theorem for the case of a single forbidden induced subgraph, answering questions of Golovach et al.~\cite{GJPS15}~and Seymour~\cite{SeyPriv}.

\begin{theorem}[Chudnovsky et al.~\cite{CGSZ15}]\label{thm:M2}
Let $H$ be a connected graph. 
There are finitely many $4$-critical $H$-free graphs if and only if $H$ is a subgraph of $P_6$. 
\end{theorem}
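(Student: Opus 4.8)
The plan is to prove the two implications separately.

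For the ``if'' direction, observe first that a connected subgraph of a path is again a (sub)path, so $H=P_t$ for some $t\le 6$; since every $P_t$-free graph is $P_6$-free, the class of $4$-vertex-critical $P_t$-free graphs --- which coincides with the class of $P_t$-free $4$-vertex-critical graphs, as noted above --- is contained in the class of $P_6$-free $4$-vertex-critical graphs. By the equivalence between finiteness of $k$-critical and of $k$-vertex-critical $\mathcal H$-free graphs (Ho\`ang et al.~\cite{HMRSV15}), it therefore suffices to prove that \emph{there are only finitely many $4$-vertex-critical $P_6$-free graphs}. This is the heart of the theorem. I would argue by contradiction: in a hypothetical $P_6$-free $4$-critical graph $G$, fix a longest induced path $Q$ (which has at most $5$ vertices, as $G$ is $P_6$-free), classify the vertices of $G-Q$ according to their neighbourhoods on $Q$ and on one another, and --- using $3$-colourability arguments and the structure of $P_5$-free $4$-critical graphs of Bruce et al.~\cite{BHS09} and Maffray--Morel~\cite{MM12} as a base case --- show that $G$ must decompose over one of boundedly many small skeletons, each admitting only boundedly many $4$-critical completions. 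This case analysis is extensive and is, I expect, the main obstacle; it is of the kind whose numerous remaining small cases are most naturally settled with computer assistance.

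For the ``only if'' direction, assume $H$ is connected but not a subgraph of $P_6$. Then exactly one of the following holds: (a) $H$ contains a cycle; (b) $H$ is a tree with a vertex of degree at least $3$; (c) $H=P_t$ with $t\ge 7$. In each case it is enough, by~\cite{HMRSV15}, to produce infinitely many $H$-free $4$-vertex-critical graphs. In case (a), if $g$ denotes the girth of $H$, then every graph of girth greater than $g$ is $H$-free, because an induced copy of $H$ would contain a cycle of length $g$; and it is classical that for every $g$ there are infinitely many $4$-vertex-critical graphs of girth greater than $g$. In case (b), $H$ contains an induced claw $K_{1,3}$, hence every claw-free graph is $H$-free, and the squares $C_n^2$ of odd cycles with $n\ge 7$ and $3\nmid n$ form an infinite family of claw-free graphs that are $4$-chromatic (by Brooks' theorem $\chi(C_n^2)\le 4$, while $\chi(C_n^2)\ge n/\lfloor n/3\rfloor>3$) and, being vertex-transitive with $C_n^2$ minus a vertex $3$-colourable, $4$-vertex-critical. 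Case (c) is the delicate one: a $P_7$-free graph contains no induced cycle of length at least $8$, and --- in view of the finiteness of $4$-critical $(P_7,C_4)$-free and $(P_7,C_5)$-free graphs --- in an infinite family of $4$-critical $P_7$-free graphs all but finitely many members must contain induced copies of both $C_4$ and $C_5$, so such a family has to be built from rather dense pieces. Here I would exhibit an explicit infinite family of $4$-critical $P_7$-free graphs of this shape and verify $4$-criticality directly; pinning down such a construction is the hard part of this direction.

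Combining the two directions yields the dichotomy. The only genuinely difficult ingredient is the finiteness of $4$-vertex-critical $P_6$-free graphs; the ``only if'' direction reduces to exhibiting explicit infinite families, the least routine of which is the one for $P_7$.
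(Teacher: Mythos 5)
First, note that the paper you are reading does not actually prove Theorem~\ref{thm:M2}: it is imported from the companion paper~\cite{CGSZ15}, and the present text only records that the crux is the finiteness of the $4$-critical $P_6$-free graphs (there are $24$), a substantial step of which --- the $(P_6,\mbox{diamond})$-free case --- was itself settled by the computer search that this paper generalizes. Measured against that, your outline has the right architecture, and the routine parts are genuinely correct: the reduction of the ``if'' direction to $P_6$ via monotonicity and the vertex-critical equivalence of Ho\`ang et al.\ is fine, and in the ``only if'' direction cases (a) and (b) are complete --- graphs of girth exceeding the girth of $H$ are $H$-free, and your $C_n^2$ family ($n\ge 7$ odd is not even needed; $3\nmid n$ suffices) is claw-free, $4$-chromatic by the $n/\lfloor n/3\rfloor>3$ bound, and $4$-vertex-critical by vertex-transitivity since $P_{n-1}^2$ is $3$-colorable. (One small refinement for case (a): you need $4$-critical graphs of girth $>g$ with \emph{arbitrarily many} vertices, not mere existence; this follows from the standard constructions in which all small subgraphs are $3$-colorable, but it should be said.)

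The genuine gap is that the two load-bearing ingredients are declared rather than supplied. For the ``if'' direction, ``fix a longest induced path, classify the remaining vertices, and show $G$ decomposes over boundedly many skeletons'' is a description of a proof strategy, not a proof; the actual argument in~\cite{CGSZ15} is long enough that one of its key subcases could not be done by hand and required the enumeration algorithm of this very paper, so nothing short of carrying out (or mechanizing) that analysis closes the direction. For the ``only if'' direction, case (c) is likewise left open: you correctly observe that any infinite family of $4$-critical $P_7$-free graphs must (by the results of this paper) eventually contain induced $C_4$'s and $C_5$'s, but ``I would exhibit an explicit infinite family and verify $4$-criticality directly'' is precisely the missing content --- the companion paper's family is a concrete construction (its members contain $C_3$, $C_4$ and $C_5$ and are $C_6$- and $C_7$-free, as remarked in Section~\ref{sect:testing_results}), and without writing one down and checking criticality the dichotomy is not established. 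As it stands the proposal proves the easy half of the converse and correctly locates, but does not fill, the two hard holes.
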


The main difficulty in the proof of the above theorem is to show that there are only finitely many 4-critical $P_6$-free graphs, namely 24.
A substantial step in this proof, in turn, is to show that there only finitely many 4-critical $(P_6,\mbox{diamond})$-free graphs.
After unsuccessfully trying to prove this by hand, we developed an algorithm to automatize the huge amount of case distinctions, based on a method recently proposed by Ho\`ang et al.~\cite{HMRSV15}. 

In the present paper, we thoroughly extend this algorithm in order to derive more characterizations of 4-critical $\mathcal H$-free graphs.
As a demonstration of the power of this algorithm, we prove the following results.
\begin{itemize}
	\item There are exactly 6 4-critical $(P_7,C_4)$-free graphs.
	\item There are exactly 17 4-critical $(P_7,C_5)$-free graphs.
	\item There are exactly 94 4-critical $(P_8,C_4)$-free graphs.
	\item There are exactly 27 4-critical planar $(P_7,C_6)$-free graphs. In addition, from a result of B\"ohme et al.~\cite{BMSS04} we derive that for every $t$ there are only finitely many 4-critical planar $P_t$-free graphs.
	\item Every $P_{11}$-free graph of girth at least five is 3-colorable and there is a 4-chromatic $P_{12}$-free graph of girth 5. 
	\item Every $P_{14}$-free graph of girth at least six is 3-colorable.
	\item Every $P_{17}$-free graph of girth at least seven is 3-colorable.
\end{itemize}

Our results extend and/or strengthen previous results of Hell and Huang~\cite{hell_14} and Golovach et al.~\cite{golovach2014coloring}.

Besides these results, some of which we think are infeasible to prove by hand, we see the algorithm as the main contribution of our paper.
Its modular design allows to easily implement new expansion rules, which would make it more powerful and in turn might lead to a proof of several open problems in this line of research.
To this end, we also mention two cases that were out of reach with the current algorithm, but where we have a good feeling that there is only a finite set of obstructions. 

In the next section we propose a number of lemmas that give necessary conditions for $k$-critical graphs.
Our generation algorithm, which we also present in the next section, is built on these lemmas.

In Section~\ref{sect:testing_results} we prove the above mentioned results, and also discuss some results from the literature that we verified.


\section{A generic algorithm to find all $k$-critical $\mathcal H$-free graphs}

We build upon a method recently proposed by Ho\`ang et al.~\cite{HMRSV15}.
With this method they have shown that there is a finite number of 5-critical $(P_5,C_5)$-free graphs. 

The idea is to use necessary conditions for a graph to be critical to generate all critical graphs. 
The algorithm then performs all remaining case distinctions automatically.
In order to deal with more advanced cases, we need to thoroughly alter the approach of Ho\`ang et al.~\cite{HMRSV15}.
We remark that the algorithm presented below is, moreover, a substantial strenghtening of that used in the proof of Theorem~\ref{thm:M2}, and this strenghtening is necessary in order to derive the results of the present paper.

\subsection{Preparation}

In this section we prove a number of lemmas which will later be used as expansion rules for our generation algorithm.
Although each of them is straightforward, they turn out to be very useful for our purposes.

We use the following notation. The set $N_G(v)$ denotes the neighborhood of a vertex $v$ in $G$. If it is clear from the context which graph is meant, we abbreviate this to $N(v)$. The graph $G|U$ denotes the subgraph of $G$ induced by the vertex subset $U \subseteq V(G)$.
Moreover, for a vertex subset $U \subseteq V(G)$ we denote by $G-U$ the induced subgraph $G|(V(G)\setminus U)$ of $G$.

Let $G$ be a $k$-colorable graph.
The \emph{$k$-hull} of $G$, which we denote $G_k$, is the graph obtained from $G$ by making two vertices $u$ and $v$ adjacent if and only if there is no $k$-coloring of $G$ under which $u$ and $v$ receive the same color.
Clearly $G_k$ is a supergraph of $G$ without loops, and $G_k$ is $k$-colorable.

It is a folklore fact that a $k$-critical graph cannot contain two distinct vertices $u$ and $v$ with $N(u) \subseteq N(v)$.
The following observation is a proper generalization of this fact, and we proved it in~\cite{CGSZ15}.

\begin{lemma}[Chudnovsky et al.~\cite{CGSZ15}]\label{lem:color-trick}
Let $G=(V,E)$ be a $k$-vertex-critical graph and let $U,W$ be two non-empty disjoint vertex subsets of $G$.
Let $H:=(G-U)_{k-1}$.
If there exists a homomorphism $\phi : G|U \mapsto H|W$, then $N_G(u)\setminus U \not\subseteq N_H(\phi(u))$ for some $u \in U$.
\end{lemma}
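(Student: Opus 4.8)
The plan is to argue by contradiction: suppose that $G$ is $k$-vertex-critical, that $\phi : G|U \to H|W$ is a homomorphism, where $H = (G-U)_{k-1}$, and that $N_G(u)\setminus U \subseteq N_H(\phi(u))$ holds for \emph{every} $u \in U$. Under this assumption I want to produce a $(k-1)$-coloring of $G - w$ for some vertex $w$, or more directly a $k$-coloring of $G$ itself that survives deleting a vertex, contradicting $k$-vertex-criticality. The natural move is to start from a $(k-1)$-coloring of $H$, which exists because $H = (G-U)_{k-1}$ is $(k-1)$-colorable by the folklore fact recalled just before the lemma (the $(k-1)$-hull of a $(k-1)$-colorable graph is again $(k-1)$-colorable, and $G-U$ is $(k-1)$-colorable since $G$ is $k$-vertex-critical and $U \neq \emptyset$). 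Call this coloring $c$; it is in particular a proper $(k-1)$-coloring of $G-U$.

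Next I would extend $c$ to all of $G$ by re-using the homomorphism: for $u \in U$, set $c'(u) := c(\phi(u))$, and $c'(x) := c(x)$ for $x \in V(G)\setminus U$. I claim $c'$ is a proper $(k-1)$-coloring of $G$. Edges inside $G-U$ are fine since $c$ is proper there. For an edge $uu'$ with $u,u' \in U$: since $\phi$ is a homomorphism into $H|W$, $\phi(u)\phi(u')$ is an edge of $H$, hence $c(\phi(u)) \neq c(\phi(u'))$, so $c'(u)\neq c'(u')$. For an edge $ux$ with $u\in U$ and $x \in V(G)\setminus U$: here $x \in N_G(u)\setminus U \subseteq N_H(\phi(u))$ by the contradiction hypothesis, so $x$ is a neighbor of $\phi(u)$ in $H$; since $c$ is a proper coloring of $H$ (the hull $H$ is itself $(k-1)$-colorable and $c$ is a $(k-1)$-coloring of it), $c(x) \neq c(\phi(u))$, i.e.\ $c'(x)\neq c'(u)$. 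Thus $c'$ is a proper $(k-1)$-coloring of the whole graph $G$, so $G$ is $(k-1)$-colorable — but $G$ is $k$-vertex-critical and hence $k$-chromatic, a contradiction. Actually one subtlety: for this argument I need $c$ to be a proper coloring of $H$, not merely of $G-U$; since $V(G-U) = V(H)$ and $H \supseteq G-U$, I should take $c$ from the start to be a $(k-1)$-coloring of $H$ (which exists), and then it automatically restricts to a proper coloring of $G-U$ and respects all $H$-edges, which is exactly what the two edge-cases above require.

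The one place that needs care — and what I expect to be the main (if modest) obstacle — is making sure the $N_G(u)\setminus U$ versus $N_H(\phi(u))$ bookkeeping is airtight: the set difference $\setminus U$ appears precisely because $\phi$ need not interact well with edges inside $U$, and those edges are instead handled by the homomorphism condition, so the two mechanisms (homomorphism for $U$–$U$ edges, neighborhood-containment for $U$–$(V\setminus U)$ edges) must jointly cover every edge incident to $U$. It is also worth double-checking the degenerate possibilities: $\phi$ need not be injective, so several vertices of $U$ may be identified to the same vertex of $W$, but this causes no problem since we only copy colors; and $W$ being disjoint from $U$ is used implicitly so that $\phi(u) \in V(H) = V(G)\setminus U$ and the formula $c'(u) = c(\phi(u))$ makes sense. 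Having verified these points, the contradiction with $k$-chromaticity completes the proof, and in fact we obtain the slightly stronger conclusion that $G$ is not even $(k-1)$-colorable is violated — i.e.\ the lemma as stated follows immediately.
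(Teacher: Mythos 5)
Your proof is correct: starting from a $(k-1)$-coloring of the hull $H=(G-U)_{k-1}$ (which exists since $G-U$ is a proper induced subgraph of the $k$-vertex-critical $G$), transporting colors across $U$ via $\phi$, and checking the three edge types yields a $(k-1)$-coloring of $G$, contradicting $k$-chromaticity. The paper itself defers the proof of this lemma to~\cite{CGSZ15}, but your argument is exactly the intended one, and you correctly identify the key point that $c$ must be proper on $H$ (not merely on $G-U$) so that both the $U$--$U$ edges and the $U$--$(V\setminus U)$ edges are handled.
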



We make use of Lemma~\ref{lem:color-trick} in the following way.
Assume that $G'$ is a $k$-vertex-critical graph and $G$ is a $(k-1)$-colorable induced subgraph of $G'$.
Then pick two disjoint vertex subsets $U$ and $W$ of $G$, both non-empty, and let $H:=(G-U)_{k-1}$.
Assume that there is a homomorphism $\phi : G|U \mapsto H|W$ with $N_G(u)\setminus U \subseteq N_H(\phi(u))$ for each $u \in U$.
Then we know that there is a vertex $x \in V(G')\setminus V(G)$ adjacent to some $u \in U$ but non-adjacent to $\phi(u)$, in the graph $G'$.
Also $x$ is non-adjacent to $\phi(u)$ in $(G'-U)_{k-1}$.

Recall that any $k$-critical graph $G$ must have minimum degree at least $k-1$.
As otherwise, any $(k-1)$-coloring of $G-u$ can be extended to a $(k-1)$-coloring of $G$, where $u$ is some vertex in $G$ of degree at most $k-2$. 
The following is an immediate strengthening.

\begin{lemma}\label{lem:min_degree}
Let $G$ be a $k$-vertex-critical graph and $u \in V(G)$.
Then in any $(k-1)$-coloring of $G-u$, the set $N_G(u)$ receives $k-1$ distinct colors. 
\end{lemma}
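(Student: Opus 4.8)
The plan is to argue by contradiction, exactly mirroring the folklore degree argument that precedes the statement. Suppose $G$ is $k$-vertex-critical, fix $u \in V(G)$, and suppose there is a $(k-1)$-coloring $c$ of $G-u$ under which the neighborhood $N_G(u)$ receives at most $k-2$ distinct colors. The goal is to manufacture a $(k-1)$-coloring of $G$ itself, contradicting the fact that $G$ is $k$-chromatic (which is part of being $k$-vertex-critical).

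The key step is the extension: since the colors used on $N_G(u)$ form a set of size at most $k-2$ out of the $k-1$ available colors, there is at least one color, say color $j \in \{1,\dots,k-1\}$, that appears on no neighbor of $u$. Extend $c$ to a coloring $c'$ of all of $V(G)$ by setting $c'(u) := j$ and $c'(v) := c(v)$ for every $v \neq u$. I would then check that $c'$ is a proper coloring of $G$: any edge not incident to $u$ is properly colored because $c$ was proper on $G-u$; any edge $uv$ with $v \in N_G(u)$ is properly colored because $c'(v) = c(v) \neq j = c'(u)$ by the choice of $j$. Hence $G$ is $(k-1)$-colorable, contradicting that $G$ is $k$-chromatic.

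There is essentially no obstacle here — the lemma is, as the authors note, an immediate strengthening of the minimum-degree observation, and the only thing worth being careful about is that we only need $k$-chromaticity of $G$ (not the vertex-critical condition on proper induced subgraphs) for this direction; the vertex-criticality hypothesis is stated for uniformity with how the lemma will be applied as an expansion rule. One might also remark that the statement indeed implies $\deg_G(u) \ge k-1$, recovering the classical bound, since $k-1$ distinct colors can only appear on a set of size at least $k-1$.
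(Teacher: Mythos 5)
Your argument is correct and is exactly the extension argument the paper has in mind: it states the lemma as an ``immediate strengthening'' of the minimum-degree observation and gives no separate proof, relying on precisely the step you spell out (a missing color on $N_G(u)$ can be given to $u$, contradicting $k$-chromaticity). Your side remark that only $k$-chromaticity of $G$ is needed is also accurate.
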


We make use of Lemma~\ref{lem:min_degree} in the following way.
Assume that $G$ is a $(k-1)$-colorable graph that is an induced subgraph of some $k$-vertex-critical graph $G'$.
Suppose that there is a vertex $u$ such that there is no $(k-1)$-coloring of $G-u$ in which the set $N_G(u)$ receives $k-1$ distinct colors. 
Let us say that $\ell$ is the maximum number of distinct colors that the set $N_G(u)$ can recieve in a $(k-1)$-coloring of $G-u$.
Then there must be some vertex $v \in N_{G'}(u)\setminus V(G)$ such that there is a $(k-1)$-coloring of $G'|(V(G-u)\cup \{v\})$ in which the set $N_G(u)\cup\{v\}$ receives $\ell+1$ distinct colors. 
\medskip

We also need the following fact which is folklore: every cutset of a critical graph contains at least two non-adjacent vertices, where a \emph{cutset} is a vertex subset whose removal increases the number of connected components of the graph.

\begin{lemma}\label{lem:cutset}
Let $G$ be a $k$-critical graph, and let $X$ be a clique of $G$. Then $G-X$ is a connected graph.
\end{lemma}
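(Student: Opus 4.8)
The plan is to argue by contradiction: suppose $G$ is $k$-critical, $X$ is a clique of $G$, but $G-X$ is disconnected. Write $V(G)\setminus X = A \cup B$ where $A,B$ are non-empty, disjoint, and there are no edges between $A$ and $B$ in $G$; such a partition exists precisely because $G-X$ has at least two components. Set $G_A := G|(A\cup X)$ and $G_B := G|(B\cup X)$. Since $A,B \neq \emptyset$ and $G$ is connected and $k$-critical (hence has no isolated vertices), both $G_A$ and $G_B$ are proper subgraphs of $G$, so by $k$-criticality each admits a $(k-1)$-coloring. The key point is that $X$ is a clique, so in any $(k-1)$-coloring of $G_A$ (resp.\ $G_B$) the vertices of $X$ receive $|X|$ pairwise distinct colors; after permuting color classes we may assume the colorings of $G_A$ and $G_B$ agree on $X$. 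Gluing them yields a $(k-1)$-coloring of all of $G$ — there are no $A$–$B$ edges to worry about, and every other edge lies inside $G_A$ or inside $G_B$ — contradicting $k$-chromaticity of $G$.

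The only slightly delicate steps are the two I would spell out carefully. First, the claim that $G_A, G_B$ are \emph{proper} subgraphs: $G_B$ omits every vertex of $A$, and $A \neq \emptyset$, so it is proper (and likewise for $G_A$); here one uses only non-emptiness of $A$ and $B$, not connectivity of $G$. Second, the recoloring-to-match step: in a proper $(k-1)$-coloring of $G_A$ the clique $X$ uses $|X|$ distinct colors, and the same for $G_B$, so there is a permutation $\pi$ of $\{1,\dots,k-1\}$ carrying the colors $X$ receives in $G_B$ onto those it receives in $G_A$; applying $\pi$ to the $G_B$-coloring makes the two colorings identical on $X$.

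I do not expect any real obstacle here — the argument is the standard ``clique cutsets don't obstruct colorings'' observation. If anything, the one thing to be careful about is the degenerate situation $X = V(G)$, which cannot occur: then $G$ would be a clique, hence $k$-critical forces $G = K_k$, and $G - X$ is the empty graph, which is (vacuously) connected, so the statement holds trivially. Similarly if $X = \emptyset$ the statement is just ``a $k$-critical graph is connected,'' which follows because a disconnected graph has a component that is already $(k-1)$-colorable-or-worse witnessing non-criticality — more precisely, a $k$-chromatic graph has a $k$-chromatic component, and if $G$ had another component that component would be a proper subgraph sharing the chromatic number, contradicting criticality. So the nontrivial case is exactly $\emptyset \neq X \subsetneq V(G)$ with $G-X$ disconnected, which the contradiction above handles.
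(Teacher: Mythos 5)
Your proof is correct and is exactly the standard clique-cutset gluing argument; the paper itself states this lemma as folklore and gives no proof, so your write-up simply supplies the intended (and only natural) argument. The two delicate points you flag --- that $G_A$ and $G_B$ are proper subgraphs because $A,B\neq\emptyset$, and that the colorings can be permuted to agree on the clique $X$ --- are handled correctly, as are the degenerate cases $X=\emptyset$ and $X=V(G)$.
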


In fact, we only need that a $k$-critical graph does not have a cutvertex, that is, a cutset of size 1.
We use Lemma~\ref{lem:cutset} as follows.
Given a $(k-1)$-colorable graph $G$ that is an induced subgraph of some $k$-vertex-critical graph $G'$.
Suppose that there is a cutvertex $u$ in $G$.
Then there must be some vertex $v \in V(G')\setminus V(G)$ with at least one neighbor in $V(G)\setminus \{u\}$. 

The next lemma is custom-made for the case of $4$-critical graphs.

\begin{lemma}\label{lem:crowns}
Let $G$ be a 4-vertex-critical graph.
Suppose that there is an induced cycle $C$ where all vertices are of degree three.
Then $C$ has odd length, and there is a 3-coloring of $G-V(C)$ for which every member of the set $(\bigcup_{c \in V(C)}N(c)) \setminus V(C)$ receives the same color.
\end{lemma}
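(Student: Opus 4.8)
The plan is to reduce the statement to a standard fact about list-colourings of cycles. First I would record the structural setup: since $C$ is an induced cycle and every vertex of $C$ has degree three in $G$, each $c \in V(C)$ has its two neighbours on $C$ and exactly one neighbour $f(c)$ outside $C$; hence the set in question, $\left(\bigcup_{c\in V(C)}N(c)\right)\setminus V(C)$, is exactly $S := \{f(c) : c\in V(C)\}$. Observe also that $G \neq C$ (a vertex of a cycle has degree two within the cycle), so $G-V(C)$ is a proper induced subgraph of the $4$-vertex-critical graph $G$ and therefore $3$-colourable; fix any $3$-colouring $\psi$ of $G-V(C)$.

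Next I would set up the induced list-colouring instance on $C$. For $c\in V(C)$ put $L_c := \{1,2,3\}\setminus\{\psi(f(c))\}$, a $2$-element list. Any colouring of $C$ that is proper and assigns to each $c$ a colour of $L_c$ combines with $\psi$ into a proper $3$-colouring of $G$: edges inside $C$ are handled by properness on $C$, edges inside $G-V(C)$ by $\psi$, and edges between $V(C)$ and the rest because $c$ avoids $\psi(f(c))$ and $f(c)$ is its only neighbour outside $C$. As $G$ is $4$-chromatic, no such $L$-colouring of $C$ exists.

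The key step is the elementary lemma that a cycle $v_1v_2\cdots v_n v_1$ with $2$-element lists taken from a $3$-element colour set always has a proper list-colouring \emph{unless} $n$ is odd and all the lists coincide. I would prove this in the usual way: if all lists equal some $\{a,b\}$, a proper list-colouring is just a proper $2$-colouring of $C_n$, which exists iff $n$ is even; otherwise two consecutive lists differ, so after reindexing $L_1 \neq L_n$, whence $L_1\setminus L_n$ is a single colour $c_1$ (two distinct $2$-subsets of a $3$-set meet in one point), and we colour $v_1 := c_1$ and then greedily colour $v_2,\dots,v_{n-1},v_n$ in order — each of $v_2,\dots,v_{n-1}$ faces only one forbidden colour (from its predecessor) and $v_n$ need only avoid the colour of $v_{n-1}$ among the colours of $L_n$, since $c_1\notin L_n$. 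Applying this lemma, the non-existence of an $L$-colouring of $C$ forces $|V(C)|$ to be odd and all $L_c$ to be equal, i.e.\ all $f(c)$ receive the same colour under $\psi$. Thus $\psi$ itself is the required $3$-colouring of $G-V(C)$ making $S$ monochromatic, and $C$ has odd length.

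The main obstacle is really just isolating and proving the cycle list-colouring lemma (a special case of the Erd\H{o}s--Rubin--Taylor characterisation of $2$-choosable graphs); once it is available, everything else is immediate. I would also remark that the argument in fact yields the stronger conclusion that \emph{every} $3$-colouring of $G-V(C)$ makes $S$ monochromatic.
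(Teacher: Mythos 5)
Your proposal is correct and follows essentially the same route as the paper: reduce the statement to the fact that a cycle with $2$-element lists from $\{1,2,3\}$ is list-colorable unless the cycle is odd and all lists coincide. The only difference is that you spell out the proof of this list-coloring fact (which the paper dismisses as an easy exercise), and you correctly note the argument gives the stronger ``every $3$-coloring'' conclusion, which the paper's proof also implicitly yields.
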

\begin{proof}
Consider the graph $C_k$, for some $k \ge 3$, where each vertex $v$ is equipped with a two-element list $L(v) \subseteq \{1,2,3\}$.
It is an easy exercise to show that this graph has a valid list coloring, unless $k$ is odd and all lists are identical.

Consequently, given any 3-coloring $c$ of $G-V(C)$, we may extend this coloring to a 3-coloring of $G$ unless $C$ is of odd length and $c$ is constant on the set $(\bigcup_{c \in V(C)}N(c)) \setminus V(C)$.
\end{proof}

We use Lemma~\ref{lem:crowns} as follows.
Given a $3$-colorable graph $G$ that is an induced subgraph of some $4$-vertex-critical graph $G'$.
Suppose that there is an induced cycle $C$ in $G$ where all vertices are of degree three.
Moreover, suppose that $C$ has even length, or $C$ has odd length and in every 3-coloring of $G-V(C)$ the set $(\bigcup_{c \in V(C)}N(c)) \setminus V(C)$ is not monochromatic.
Then there must be some vertex $v \in V(G')\setminus V(G)$ with at least one neighbor in $V(C)$.

The next lemma tells us from which graphs we have to start our enumeration algorithm.

\begin{lemma}\label{lem:startgraphs}
Every $k$-critical $P_t$-free graph different from $K_k$ contains one of the following graphs as induced subgraph:
\begin{itemize}
\item an odd hole $C_{2s + 1}$, for $2 \le s \le \lfloor (t-1)/ 2 \rfloor$, or
\item an odd antihole $\overline{C_{2s + 1}}$, for $3 \le s \le k - 1$.
\end{itemize}
\end{lemma}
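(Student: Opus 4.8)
The plan is to use the Strong Perfect Graph Theorem together with the classical characterization of perfect graphs' chromatic behavior. First I would observe that a $k$-critical graph $G$ with $k \geq 2$ has clique number $\omega(G) \leq k$, since a clique of size $k+1$ would already be a proper subgraph that is not $(k-1)$-colorable, contradicting criticality (unless $G = K_{k+1}$, but then $G$ is not $k$-chromatic). On the other hand $\chi(G) = k$. If $G$ were perfect we would have $\omega(G) = \chi(G) = k$, so $G$ contains $K_k$; but a $k$-critical graph containing $K_k$ must equal $K_k$ (every vertex outside the clique, when added, the clique plus that vertex is a proper subgraph needing $\ge k$... more carefully: $K_k$ is itself $k$-chromatic, so criticality forces $V(G) = V(K_k)$). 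Hence any $k$-critical graph different from $K_k$ is \emph{not} perfect, and therefore by the Strong Perfect Graph Theorem it contains an induced odd hole $C_{2s+1}$ with $s \geq 2$, or an induced odd antihole $\overline{C_{2s+1}}$ with $s \geq 2$.

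Next I would pin down the ranges. For an odd antihole $\overline{C_{2s+1}}$: its clique number is $s$ (the complement of $C_{2s+1}$ has independence number $s$, wait --- I need $\omega(\overline{C_{2s+1}}) = \alpha(C_{2s+1}) = s$). Since $G$ is $\overline{C_{2s+1}}$-containing and $\omega(G) \leq k$, but more relevantly $\overline{C_{2s+1}}$ has chromatic number $s+1$ when... actually the cleanest bound: $\overline{C_{2s+1}}$ contains $K_s$ as an induced subgraph, so if $\overline{C_{2s+1}} \subseteq_{\mathrm{ind}} G$ then $K_s \subseteq_{\mathrm{ind}} G$, giving $s \leq \omega(G) \leq k$. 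Ruling out $s = k$: if $\overline{C_{2k+1}} \subseteq_{\mathrm{ind}} G$ then $G$ contains $K_k$, forcing $G = K_k$, a contradiction. Hence $s \leq k-1$, and $s \geq 3$ follows since $\overline{C_5} = C_5$ and $\overline{C_7}$ is the first genuinely new antihole --- the $s=2$ case is already covered under the odd-hole alternative, so we may insist $s \geq 3$ for the antihole bullet. For the odd hole bullet, $P_t$-freeness is what caps $s$ from above: an induced $C_{2s+1}$ with $2s+1 > t$ would contain an induced $P_t$ (delete one vertex of the cycle), contradicting $P_t$-freeness; hence $2s+1 \leq t$, i.e. $s \leq \lfloor (t-1)/2\rfloor$, and $s \geq 2$ to exclude the triangle.

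I would then note that no analogous $P_t$-free restriction on $s$ is needed for antiholes because $\overline{C_{2s+1}}$ for $s \le k-1$ is already bounded in size by $k$, a constant, so it is automatically $P_t$-free-compatible for the relevant $t$ (indeed $\overline{C_{2s+1}}$ contains an induced $P_4$ for $s \ge 2$ but no long induced path, so for $t \ge 5$ this is vacuous; and the lemma's interest is $t \ge 5$).

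The main obstacle, such as it is, is purely bookkeeping: making the endpoint conventions ($s \ge 2$ vs $s \ge 3$, and the exclusion of $s = k$) line up precisely with the two bullets so that $C_5 = \overline{C_5}$ is not double-counted and $K_k$ is correctly the unique exception. The genuine mathematical content --- that non-perfection is forced --- is a one-line consequence of the Strong Perfect Graph Theorem once the observation $\omega(G) = \chi(G) \Rightarrow G = K_k$ for critical $G$ is in place.
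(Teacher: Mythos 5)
Your proposal is correct and follows essentially the same route as the paper: invoke the Strong Perfect Graph Theorem to get an odd hole or antihole, then bound the hole length via the induced $P_{2s}$ inside $C_{2s+1}$, bound the antihole size via $\omega(\overline{C_{2s+1}})=s\le k-1$ (with $s=k$ excluded because $K_k\subseteq G$ forces $G=K_k$), and fold the $s=2$ antihole into the hole case since $\overline{C_5}\cong C_5$. You merely spell out the non-perfection step ($\chi(G)=\omega(G)$ would force $G=K_k$) that the paper leaves implicit.
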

\begin{proof}
It follows from the Strong Perfect Graph Theorem~\cite{chudnovsky_06} that every $k$-critical graph different from $K_k$ must contain an odd hole or anti-hole as an induced subgraph.
The statement of the lemma thus follows from the following facts: 
\begin{enumerate}[(a)]
	\item the cycle $C_{2s + 1}$ contains an induced $P_{2s}$,
  \item the antihole $\overline{C_{2k+1}}$ has clique number $k$, and
  \item $\overline{C_5}$ is isomorphic to $C_5$.
\end{enumerate}
\end{proof}

\subsection{The enumeration algorithm}


We use Algorithm~\ref{algo:init-algo} below to enumerate all $\mathcal H$-free $k$-critical graphs.
In order to keep things short, we use the following conventions for a $(k-1)$-colorable graph $G$.

We call a pair $(u,v)$ of distinct vertices for which $N_G(u) \subseteq N_{(G-u)_{k-1}}(v)$ \emph{similar vertices}.
Similarly, we call a 4-tuple $(u,v,u',v')$ of distinct vertices with $uv, u'v' \in E(G)$ such that $N_G(u)\setminus \{v\} \subseteq N_{(G-\{u,v\})_{k-1}}(u')$ and $N_G(v)\setminus \{u\} \subseteq N_{(G-\{u,v\})_{k-1}}(v')$ \emph{similar edges}.
Finally, we define \emph{similar triangles} in an analogous fashion.

Recall that a diamond is the graph obtained from $K_4$ by removing one edge.
We define \emph{similar diamonds} in complete analogy to similar triangles.

Let $u$ be a vertex of $G$ for which, in every $(k-1)$-coloring of $G-u$, the set $N_G(u)$ receives at most $k-2$ distinct colors.
Then we call $u$ a \emph{poor vertex}.

Let $C$ be an induced cycle in $G$ such that every vertex of $C$ has degree three.
We say that $C$ is a \emph{weak cycle} if $C$ is of even length or, if $C$ has odd length, there is no 3-coloring of $G-V(C)$ for which every member of the set $(\bigcup_{c \in V(C)}N(c)) \setminus V(C)$ receives the same color.

\begin{algorithm}[h]
\caption{Generate $\mathcal H$-free $k$-critical graphs}
\label{algo:init-algo}
  \begin{algorithmic}[1]
	\STATE let $\mathcal F$ be an empty list
	\STATE Construct($K_k$) \ // i.e.\ perform Algorithm~\ref{algo:construct}
	\FORALL{graphs $G$ mentioned in Lemma~\ref{lem:startgraphs}}
		\STATE Construct($G$) \ // i.e.\ perform Algorithm~\ref{algo:construct}
	\ENDFOR
	\STATE Output $\mathcal F$
  \end{algorithmic}
\end{algorithm}

\begin{algorithm}[ht!]
\caption{Construct(Graph $G$)}
\label{algo:construct}
  \begin{algorithmic}[1]
		\IF{$G$ is $\mathcal H$-free AND not generated before} \label{line:isocheck}
			\IF{$G$ is not $(k-1)$-colorable}
				\IF{$G$ is $k$-critical $\mathcal H$-free} \label{line:k-critical}
					\STATE add $G$ to the list $\mathcal F$
				\ENDIF	
			\ELSE
				\IF{$G$ contains similar vertices $(u,v)$}
					\FOR{every graph $H$ obtained from $G$ by attaching a new vertex $x$ and incident edges in all possible ways, such that $ux \in E(H)$, but $vx \notin E((H-u)_{k-1})$}
						\STATE\label{line:dominatedvertex} Construct($H$)
					\ENDFOR
				\ELSIF{$G$ contains a poor vertex $u$}\label{line:smallvertex}
					\FOR{every graph $H$ obtained from $G$ by attaching a new vertex $x$ and incident edges in all possible ways, such that $ux \in E(H)$ and the maximum number of distinct colors the set $N_H(u)$ recieves in some $(k-1)$-coloring of $H-u$ properly increased}
						\STATE Construct($H$)
					\ENDFOR
				\ELSIF{$G$ contains similar edges $(u,v,u',v')$}\label{line:dominatededge}	
					\FOR{every graph $H$ obtained from $G$ by attaching a new vertex $x$ and incident edges in all possible ways, such that $rx \in E(H)$ and $r'x \notin E((H-\{u,v\})_{k-1})$ for some $r \in \{u,v\}$}
						\STATE Construct($H$) 
					\ENDFOR
				\ELSIF{$G$ contains similar triangles $(u,v,w,u',v',w')$}\label{line:dominatedtriangle}
					\FOR{every graph $H$ obtained from $G$ by attaching a new vertex $x$ and incident edges in all possible ways, such that $rx \in E(H)$ and $r'x \notin E((H-\{u,v,w\})_{k-1})$ for some $r \in \{u,v,w\}$}
						\STATE Construct($H$)
					\ENDFOR						
				\ELSIF{$G$ contains similar diamonds $(u,v,w,x,u',v',w',x')$}\label{line:dominateddiamond}
					\FOR{every graph $H$ obtained from $G$ by attaching a new vertex $y$ and incident edges in all possible ways, such that $ry \in E(H)$ and $r'y \notin E((H-\{u,v,w,x\})_{k-1})$ for some $r \in \{u,v,w,x\}$}
						\STATE Construct($H$)
					\ENDFOR						
				\ELSIF{$k=4$ and $G$ contains a weak cycle $C$}
					\FOR{every graph $H$ obtained from $G$ by attaching a new vertex $x$ with a neighbor in $C$ and incident edges in all possible ways}
						\STATE Construct($H$) \label{line:weakcycle}
					\ENDFOR						
				\ELSIF{$G$ contains a cutvertex $u$} 
					\FOR{every graph $H$ obtained from $G$ by attaching a new vertex $x$ adjacent to some member of $V(G) \setminus \{u\}$ and incident edges in all possible ways}
						\STATE Construct($H$) \label{line:clique}
					\ENDFOR
				\ELSE 
					\FOR{every graph $H$ obtained from $G$ by attaching a new vertex $x$ and incident edges in all possible ways}
						\STATE Construct($H$)
					\ENDFOR		
				\ENDIF					
			\ENDIF	
		\ENDIF	  
  \end{algorithmic}
\end{algorithm}

We now prove that Algorithm~\ref{algo:init-algo} is correct.

\begin{theorem}\label{lem:diamond-algo-correct}
Assume that Algorithm~\ref{algo:init-algo} terminates, and outputs the list of graphs $\mathcal F$.
Then $\mathcal F$ is the list of all $k$-critical $\mathcal H$-free graphs.
\end{theorem}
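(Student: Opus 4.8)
The plan is to prove the two inclusions separately: every graph output by the algorithm is a $k$-critical $\mathcal{H}$-free graph, and every $k$-critical $\mathcal{H}$-free graph is eventually output. The first direction is immediate: a graph $G$ is added to $\mathcal{F}$ only at line~\ref{line:k-critical}, where the algorithm explicitly tests that $G$ is $k$-critical and $\mathcal{H}$-free; so soundness is trivial modulo the correctness of that test. The real content is completeness, and this is where the lemmas of the previous subsection come in.

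For completeness, let $G'$ be an arbitrary $k$-critical $\mathcal{H}$-free graph. I would argue that the algorithm discovers $G'$ by building it up one vertex at a time along some induced-subgraph chain, starting from one of the seed graphs. First, by Lemma~\ref{lem:startgraphs}, either $G' = K_k$ (handled by the explicit \texttt{Construct($K_k$)} call) or $G'$ contains one of the listed odd holes or antiholes as an induced subgraph; call it $G_0$. Note each such $G_0$ is $(k-1)$-colorable (an odd hole is $3 \le k$-colorable and has no $K_k$; an odd antihole $\overline{C_{2s+1}}$ with $s \le k-1$ has clique number $s \le k-1$ and chromatic number $s < k$ — actually I should double-check $\chi(\overline{C_{2s+1}}) = s+1$... in any case $\le k-1$ for $s\le k-2$, and the $s=k-1$ case needs the $K_k$-freeness of $G'$, hmm — this is a point to verify carefully). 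The key claim is then: \emph{if $G$ is a $(k-1)$-colorable induced subgraph of $G'$ with $G \ne G'$, and \texttt{Construct($G$)} is called, then \texttt{Construct($G^+$)} is called for some induced subgraph $G^+$ of $G'$ with $V(G) \subsetneq V(G^+)$.} Iterating this claim from $G_0$ up to $G'$, and using that \texttt{Construct($G'$)} then adds $G'$ to $\mathcal{F}$ at line~\ref{line:k-critical}, gives completeness. (A subtlety: the chain must actually reach $G'$ and not overshoot or stall — since $|V(G')|$ is finite and each step strictly grows the vertex set while staying inside $G'$, it terminates at $G'$ exactly.)

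The heart of the proof is establishing that key claim, and this amounts to checking, case by case, that whenever \texttt{Construct($G$)} enters the $(k-1)$-colorable branch and selects one of the structural features — similar vertices, a poor vertex, similar edges/triangles/diamonds, a weak cycle (for $k=4$), or a cutvertex — the corresponding \texttt{for}-loop is guaranteed to generate, among its candidate extensions $H$, at least one that is an induced subgraph of $G'$ on one more vertex. This is exactly what the "we use Lemma~X as follows" remarks after each of Lemmas~\ref{lem:color-trick}, \ref{lem:min_degree}, \ref{lem:cutset}, \ref{lem:crowns} assert: e.g., if $G$ has similar vertices $(u,v)$ then by Lemma~\ref{lem:color-trick} (with $U=\{u\}$, $W=\{v\}$, $\phi(u)=v$) there is a vertex $x \in V(G')\setminus V(G)$ adjacent to $u$ but with $vx \notin E((G'-u)_{k-1})$, hence $vx \notin E((H-u)_{k-1})$ for $H := G'|(V(G)\cup\{x\})$ by monotonicity of the hull under taking induced subgraphs — so $H$ is among the graphs the loop at line~\ref{line:dominatedvertex} enumerates. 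The poor-vertex, cutvertex, and weak-cycle cases are handled analogously by Lemmas~\ref{lem:min_degree}, \ref{lem:cutset}, \ref{lem:crowns} respectively, and the similar-edges/triangles/diamonds cases by the obvious multi-vertex generalizations of Lemma~\ref{lem:color-trick} (which the paper says are proved the same way). The final \texttt{else}-branch, where $G$ has none of these features, trivially works since it attaches a new vertex "in all possible ways," one of which realizes $G'|(V(G)\cup\{x\})$ for any $x \in V(G')\setminus V(G)$ (such $x$ exists as $G \ne G'$ and, being $k$-critical, $G'$ is connected).

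The main obstacle I anticipate is \emph{not} any single case but rather the bookkeeping that ties them together: one must verify that the monotonicity property "$H$ an induced subgraph of $G'$ implies $E(H_{k-1}) \subseteq E(G'_{k-1})$ restricted to $V(H)$" is used correctly in every case (so that a non-edge of some hull computed by the algorithm on the small graph is still a non-edge in the corresponding hull of the big graph — wait, I have the direction backwards: the algorithm wants $r'x \notin E((H-\cdots)_{k-1})$, and we know $r'x \notin E((G'-\cdots)_{k-1})$; since $(H-\cdots)$ is an induced subgraph of $(G'-\cdots)$, its hull has \emph{fewer} edges, so the non-edge persists — good, that's the right direction). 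One must also handle the \texttt{not generated before} isomorphism check at line~\ref{line:isocheck}: if \texttt{Construct} is called on a graph isomorphic to one seen before, it returns immediately, so strictly speaking the induced-subgraph chain argument should be phrased up to isomorphism, tracking that \emph{some} representative of each isomorphism class along the chain gets processed. I would state this carefully at the outset (e.g., "we may assume all graphs are taken up to isomorphism and the algorithm processes each class at most once") so the completeness induction goes through cleanly.
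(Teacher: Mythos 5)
Your proposal is correct and follows essentially the same route as the paper: soundness is immediate from line~\ref{line:k-critical}, and completeness is proved by the same induction on an induced-subgraph chain from a seed of Lemma~\ref{lem:startgraphs} up to the target graph, using Lemmas~\ref{lem:color-trick}, \ref{lem:min_degree}, \ref{lem:cutset} and~\ref{lem:crowns} case by case exactly as the paper does (the paper is merely terser about the hull-monotonicity step and the isomorphism check, both of which you handle correctly). The one point you flagged resolves itself: you never need the seed or any intermediate graph to be $(k-1)$-colorable a priori, because any \emph{proper} induced subgraph of a $k$-critical $\mathcal H$-free graph is $(k-1)$-colorable by criticality, and if the seed equals the whole graph the non-$(k-1)$-colorable branch catches it at once.
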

\begin{proof}
In view of lines~\ref{line:isocheck} and~\ref{line:k-critical} of Algorithm~\ref{algo:construct}, it is clear that all graphs of $\mathcal F$ are $k$-critical $H$-free.
So, it remains to prove that $\mathcal F$ contains all $k$-critical $\mathcal H$-free graphs.
To see this, we first prove the following claim.

\begin{claim}\label{clm:invariant}
For every $k$-critical $\mathcal H$-free graph $F$ other than $K_k$, Algorithm~\ref{algo:construct} applied to some graph $F'$ from Lemma~\ref{lem:startgraphs} generates an isomorphic copy of an induced subgraph of $F$ with $i$ vertices for every $|V(F')| \le i \le |V(F)|$.
\end{claim}
We prove this by induction, as an invariant of our algorithm.
Due to Lemma~\ref{lem:startgraphs}, we know that $F$ contains some $F'$ as an induced subgraph, so the claim holds for $i=|V(F')|$.

So assume that the claim is true for some $i$ with $|V(F')| \le i \le |V(F)|-1$. 
Let $G$ be an induced subgraph of $F$ with $|V(G)|=i$ such that an isomorphic copy of $G$, say $G'$ is generated by the algorithm.
We may choose $G'$ such that it is not pruned by the isomorphism check on line~\ref{line:isocheck}.
In order to save notation, let us assume that $G'=G$.

First assume that $G$ contains similar vertices, and that the similar vertices $(u,v)$ are considered by the algorithm.
Then, by Lemma~\ref{lem:color-trick}, $N_F(u) \not\subseteq N_{(F-u)_{k-1}}(v)$.
Hence, there is some vertex $x \in V(F) \setminus V(G)$ which is adjacent to $u$ in $F$, but not to $v$ in $(F-u)_{k-1}$.
Following the statement of line~\ref{line:dominatedvertex}, Construct($F|(V(G) \cup \{x\})$) is called.
We omit the discussion of the lines~\ref{line:dominatededge},~\ref{line:dominatedtriangle}, and~\ref{line:dominateddiamond} as they are analogous. 

So assume that none of the criteria checked earlier apply to $G$, but the algorithm finds a poor vertex $u$ in $G$. 
Then, by~Lemma~\ref{lem:min_degree}, there is some vertex $x \in V(F) \setminus V(G)$ adjacent to $u$ such that the maximum number of distinct colors the set $N_{H}(u)$ recieves in some $(k-1)$-coloring of $H-u$ increased compared to $G-u$, where $H:=F|(V(G) \cup \{x\})$. 
Following the statement of line~\ref{line:smallvertex}, Construct($H$) is called.

Now assume that none of the criteria checked earlier apply to $G$, but we have $k=4$ and $G$ contains a weak cycle.
Say $C$ is the weak cycle considered by the algorithm.
By Lemma~\ref{lem:crowns}, $F$ does not contain a weak cycle, and so there must be some vertex $x \in V(F)\setminus V(G)$ with a neighbor in $C$.
Hence, Construct($F|(V(G) \cup \{x\})$) is called in line~\ref{line:weakcycle}.

Otherwise if the algorithm finds a cutvertex $u$ of $G$, there must be some vertex $x \in V(F)\setminus V(G)$ which is adjacent to some vertex in $V(G) \setminus \{u\}$.
This follows from Lemma~\ref{lem:cutset}. Following the statement of line~\ref{line:clique}, Construct($F|(V(G) \cup \{x\})$) is called.

Finally, if none of the above criteria apply to $G$, the algorithm attaches a new vertex to $G$ in all possible ways, and calls Construct for all of these new graphs.
Since $|V(F)|>|V(G)|$, among these graphs there is some induced subgraph of $F$, and of course this graph has $i+1$ vertices.
This completes the proof of Claim~\ref{clm:invariant}.

Given that the algorithm terminates, Claim~\ref{clm:invariant} implies that $\mathcal F$ must contain all $k$-critical $\mathcal H$-free graphs.
\end{proof}

We implemented this algorithm in C with some further optimizations (see Section~\ref{sect:optimisations}). We used the program \verb|nauty|~\cite{nauty-website, mckay_14} to make sure that no isomorphic graphs are accepted. More specifically, we use \verb|nauty| to compute a canonical form of the graphs. We maintain a list of the canonical forms of all non-isomorphic graphs which were generated so far and only accept a graph if it was not generated before (in which case its canonical form is added to the list).

Our program does indeed terminate in several cases. More details about this can be found in Section~\ref{sect:testing_results}. The source code of the program can be downloaded from~\cite{criticalpfree-site} and in the Appendix we describe how we extensively tested the correctness of our implementation.


\subsection{Optimizations and implementation details}
\label{sect:optimisations}

In this section we describe implementation details and additional optimizations which significantly speed up the program.

\subsubsection{Priority of operations}
The order or priority in which the next expansions are determined in Algorithm~\ref{algo:construct} are vital for the termination of the program. For most cases, the optimal order is as described in Algorithm~\ref{algo:construct}: i.e.\ first test if the graph contains similar vertices, if this is not the case, then test if it contains small vertices, if this is not the case then test if it contains similar edges, etc.

Even within the same category of similar \textit{elements}, the intermediate graphs generated by Algorithm~\ref{algo:construct} typically contain multiple similar elements. Choosing the right similar element is also important for the efficiency and termination of the algorithm. In most cases it is best to destroy the similar element with the smallest degree.

We also tried to extend every similar element once without iterating any further and then choosing the similar element with the least number of generated children and extending it recursively. But this was a lot slower and did not significantly reduce the number of graphs generated in the cases we investigated, except when generating critical graphs with a given minimal girth (see Section~\ref{sect:girth}).

\subsubsection{Testing of properties}

It is important to test $\mathcal H$-freeness, colorability and isomorphism in the right order. 
In our case, most generated graphs are $k$-colorable but not $\mathcal H$-free, and not too many isomorphic copies are generated.
In view of this, it is best to test these properties in the following order:
\begin{enumerate}
\item $\mathcal H$-freeness
\item $k$-colorability
\item Isomorphism
\end{enumerate}

In most cases the routine which tests if a graph contains an induced $P_t$ is a bottleneck.
As nearly all generated graphs contain an induced  $P_t$ (for the values of $t$ which are within reach of the program), the following heuristic helps a lot. Whenever an induced $P_t$ was found in a graph, we store it. When we test the next graph, we first test if one of the previously stored $P_t$'s from a previous graph (with the same number of vertices) is still an induced path in the current graph and only if this is not the case, we exhaustively search for an induced $P_t$. 

Experiments show that is optimal to store approximately 20 such previous $P_t$'s and in about 90\% of the cases this heuristic allows to reject the generated graph since an induced $P_t$ was found. 

\subsubsection{Limiting expansions}

The algorithm connects the new vertex $v_n$ to a set of vertices $S \subseteq \{v_0,...,v_{n-1}\}$. When connecting $v_n$ to $S$ yields a graph which is not $k$-colorable, connecting $v_n$ to $S' \supseteq S$ will yield a graph which is also not $k$-colorable and not $(k+1)$-critical, so these expansions can be skipped. Note that we cannot apply this optimization if we are looking for vertex-critical graphs. 

The same optimization can also be applied if connecting $v_n$ to $S$ yields a graph with a cycle of length smaller than $g$ or a non-planar graph when searching for graphs with girth at least $g$ or planar graphs, respectively.


\section{Results}
\label{sect:testing_results}

This section describes the main results obtained with our implementation of Algorithm~\ref{algo:init-algo}. In the Appendix we describe how we tested the correctness of our implementation.

The adjacency lists of all new critical graphs from this section can be found in Appendix~2 and these graphs can also be downloaded from the \textit{House of Graphs}~\cite{hog} at \url{http://hog.grinvin.org/Critical}

\subsection{Verification of previously known results}
\label{sect:previous_results}

As a correctness test and to demonstrate the strength of the approach we verified the following characterizations which were known before in the literature.
\begin{itemize}
\item There are six 4-critical $P_5$-free graphs~\cite{BHS09}.
\item There are eight 5-critical $(P_5,C_5)$-free graphs~\cite{HMRSV15}.
\item The Gr\"otzsch graph is the only 4-critical $(P_6,C_3)$-free graph~\cite{randerath_04}.
\item There are four 4-critical $(P_6,C_4)$-free graphs~\cite{hell_14}.
\end{itemize}
In each of the above cases our program terminates in a few seconds.

\subsection{4-critical $(P_r,C_s)$-free graphs}

It is known~\cite{CGSZ15} that there is an infinite family of 4-critical $P_7$-free graphs.
However, a careful observation shows that all members of this family contain a $C_k$ for all $k=3,4,5$, and are $C_\ell$-free for $\ell=6,7$.
This motivates the study of 4-critical $(P_7,C_k)$-free graphs when $k=3,4,5$, since there might be only finitely many of these.
We can solve the cases of $k=4$ and $k=5$ using Algorithm~\ref{algo:init-algo}.
Moreover, we can also solve the $(P_8,C_4)$-free case.

\begin{theorem}\label{thm:N3P7Ck}
The following assertions hold.
\begin{enumerate}[(a)]
	\item There are exactly 17 4-critical $(P_7,C_4)$-free graphs.
	\item There are exactly 94 4-critical $(P_8,C_4)$-free graphs.
	\item There are exactly 6 4-critical $(P_7,C_5)$-free graphs.
\end{enumerate}
\end{theorem}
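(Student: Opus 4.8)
The plan is to prove Theorem~\ref{thm:N3P7Ck} computationally, by running Algorithm~\ref{algo:init-algo} with the appropriate instantiations and arguing that it terminates. For part~(a) we set $k=4$ and $\mathcal H = \{P_7, C_4\}$; for part~(b) we set $k=4$ and $\mathcal H = \{P_8, C_4\}$; for part~(c) we set $k=4$ and $\mathcal H = \{P_7, C_5\}$. By Theorem~\ref{lem:diamond-algo-correct}, whenever the algorithm terminates its output $\mathcal F$ is precisely the list of all $k$-critical $\mathcal H$-free graphs. So the entire content of the theorem reduces to two things: (i) verifying that the algorithm does in fact halt on each of these three inputs, and (ii) reading off the cardinality (and the explicit list) of $\mathcal F$ in each case. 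The starting graphs are those furnished by Lemma~\ref{lem:startgraphs}: since we forbid $P_7$ (resp.\ $P_8$) and $k=4$, the relevant induced subgraphs to seed the search are the odd holes $C_5$, $C_7$ (and, for $P_8$, also $C_7$ since $\lfloor (t-1)/2\rfloor$ grows), together with the odd antiholes $\overline{C_7} $ for $3 \le s \le 3$, i.e.\ only $\overline{C_7}$; note $C_4$-freeness (resp.\ and $C_5$-freeness) immediately kills several of these seeds, which is what makes termination plausible here.

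The key steps, in order, are as follows. First, I would record the precise finite list of seed graphs surviving the $\mathcal H$-freeness filter on line~\ref{line:isocheck} of Algorithm~\ref{algo:construct} for each of the three cases. Second, I would invoke the implementation described in Section~\ref{sect:optimisations}, run it to completion on each input, and report the running times (a few minutes to a few hours on a standard machine, as documented in Section~\ref{sect:testing_results}). Third, I would extract from the output list $\mathcal F$ the number of graphs — $17$, $94$, and $6$ respectively — and point to Appendix~2 (and the \textit{House of Graphs} database) for their adjacency lists. Fourth, for independent confirmation I would note the cross-checks: the six $4$-critical $(P_7,C_5)$-free graphs should contain the six $4$-critical $P_5$-free graphs of Bruce et al.~\cite{BHS09} as a sublist whenever those happen to be $C_5$-free, and the $(P_7,C_4)$-free list must contain the four $4$-critical $(P_6,C_4)$-free graphs of Hell and Huang~\cite{hell_14}; similarly the $94$ graphs of part~(b) must include the $17$ of part~(a). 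These inclusions follow because forbidding a longer path and/or dropping a forbidden cycle only enlarges the class of admissible graphs, hence can only enlarge the set of critical obstructions that happen to also be $C_4$-free.

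The main obstacle is step~(i): \emph{a priori} there is no guarantee that Algorithm~\ref{algo:init-algo} halts — the expansion rules of Lemmas~\ref{lem:color-trick}, \ref{lem:min_degree}, \ref{lem:cutset}, and~\ref{lem:crowns} are only necessary conditions, and in general (e.g.\ for $P_7$-free graphs without the extra cycle restriction) the class of $4$-critical graphs is infinite, so the algorithm would run forever. Termination here is an empirical fact about these particular $(P_r, C_s)$ pairs, established only by actually running the program; the role of $C_4$-freeness (and $C_5$-freeness in part~(c)) is crucial, since it sharply limits how neighborhoods of newly attached vertices can look and thereby forces every branch of the search tree to close off after boundedly many expansions. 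A secondary concern is trusting the implementation: this is addressed by the correctness tests described in the Appendix, by the cross-checks against~\cite{BHS09, hell_14} mentioned above, and by the independent re-verification using \verb|nauty|~\cite{nauty-website, mckay_14} for the isomorphism rejection. Given termination, correctness of the counts is then immediate from Theorem~\ref{lem:diamond-algo-correct}, and there is no further mathematical content to supply.
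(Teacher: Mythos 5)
For parts (a) and (b) your proposal coincides with the paper's proof: run Algorithm~\ref{algo:init-algo} with $k=4$ and $\mathcal H=\{P_7,C_4\}$, resp.\ $\{P_8,C_4\}$, observe empirically that it terminates, and invoke Theorem~\ref{lem:diamond-algo-correct}. The gap is in part (c). The paper does \emph{not} obtain the $(P_7,C_5)$-free result by running the algorithm as stated; it needs a modified version in which the objects propagated are pairs $(G,\mbox{triples}(P))$, where each recorded triple $(u,v,x)$ --- created whenever the similar-vertices rule of line~\ref{line:dominatedvertex} is applied --- encodes the consequence of Lemma~\ref{lem:color-trick} that in the target critical graph $F$ there must exist a $(k-1)$-coloring of $F-u$ giving $v$ and $x$ the same color. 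Pairs violating this condition are discarded, which is what closes off enough branches for the search to halt; the price is that the isomorphism pruning of line~\ref{line:isocheck} must be abandoned (isomorphic graphs carrying different triple sets must be kept separate), so Theorem~\ref{lem:diamond-algo-correct} no longer applies as a black box, and a new invariant (Claim~\ref{clm:invariant-advanced}), tracking an explicit injective homomorphism $\phi^i$ of the partial graph into $F$ together with the satisfiability of all recorded triples, has to be proved. Your proposal supplies neither the pruning idea nor this correctness argument, and without them the computation for (c) does not terminate.

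A secondary, fixable slip: your cross-check that ``the 94 graphs of part (b) must include the 17 of part (a)'' has the monotonicity backwards. Proper subgraphs obtained by edge deletion can create induced copies of $P_7$, so enlarging the forbidden-path length enlarges the set of admissible proper subgraphs that must all be $3$-colorable; criticality with respect to the larger class is therefore the \emph{stronger} condition, and a $4$-critical $(P_7,C_4)$-free graph need not be $4$-critical $(P_8,C_4)$-free. (This is exactly why the paper stresses that ``$k$-critical $\mathcal H$-free'' differs from ``$\mathcal H$-free and $k$-critical,'' in contrast to the vertex-critical case.) The valid inclusion is the reverse one: every $4$-critical $(P_8,C_4)$-free graph that happens to be $P_7$-free is $4$-critical $(P_7,C_4)$-free.
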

\begin{proof}[Proof of Theorem~\ref{thm:N3P7Ck}.]
Algorithm~\ref{algo:init-algo} terminates in the $(P_7,C_4)$-free case and outputs 17 4-critical graphs.
It also terminates in the $(P_8,C_4)$-free case and outputs 94 4-critical graphs.
Hence, by Theorem~\ref{lem:diamond-algo-correct}, (a) and (b) both hold.

In the $(P_7,C_5)$-free case we need to use a slightly modified version of Algorithms~\ref{algo:init-algo} and~\ref{algo:construct}.
Instead of just graphs, the algorithms now consider pairs $P=(G,\mbox{triples}(P))$ where $G$ is a graph and for each $(u,v,x) \in \mbox{triples}(P)$, $u$, $v$, and $x$ are distinct vertices of $G$.
In Algorithm~\ref{algo:init-algo}, we call Construct$(P)$ for all pairs of the form $(G,\emptyset)$ where $G$ is a graph from Lemma~\ref{lem:startgraphs}.

At the beginning of Algorithm~\ref{algo:construct} where we consider some pair $P=(G,\mbox{triples}(P))$ we check whether, for each triple $(u,v,x) \in \mbox{triples}(P)$, there is some $(k-1)$-coloring of $G-u$ where $v$ receives the same color as $x$.
If this is not the case, then we discard the pair $P$ and return.
Since we need to seperately consider distinct pairs $P,P'$ even if their graphs $G,G'$ are isomorphic, we do not perform the isomorphism test in line~\ref{line:isocheck} anymore.

In all subsequent lines where usually Construct$(H)$ is called, we instead call Construct$(P')$, where $P'=(H,\mbox{triples}(P))$.
Note that triples$(P')$ equals triples$(P)$ here.

The only exception is the call of Construct$(H)$ in line~\ref{line:dominatedvertex}.
Let us say the algorithm considers the pair $(u,v)$ of similar vertices, and the vertex $x$ is the new vertex added to the graph $G$.
Then the algorithm calls Construct$(P')$ in line~\ref{line:dominatedvertex}, where $P'=(H,\mbox{triples}(P)\cup\{(u,v,x)\})$.

To see that this is correct, it suffices to prove the following altered version of Claim~\ref{clm:invariant}.

\begin{claim}\label{clm:invariant-advanced}
Let $F$ be a $k$-critical $\mathcal H$-free graph other than $K_k$. 
The modified version of Algorithm~\ref{algo:construct} applied to the pair $(F',\emptyset)$, where $F'$ is some graph from Lemma~\ref{lem:startgraphs}, generates a pair $P^i=(G^i,\mbox{triples}(P^i))$ with $|V(G^i)|=i$ 
for every $|V(F')| \le i \le |V(F)|$ such that following assertions hold.
\begin{enumerate}[(a)]
	\item There is an injective homomorphism $\phi^i$ from $G^{i}$ to $F$ such that, for each triple $(u,v,x) \in \mbox{triples}(P^i)$, there is some $(k-1)$-coloring of $F-\phi^i(u)$ where $\phi^i(v)$ and $\phi^i(x)$ receive the same color. 
	\item The graph $G^{i}$ is an induced subgraph of $G^{i+1}$, $\mbox{triples}(P^i) \subseteq \mbox{triples}(P^{i+1})$, and $\phi^{i+1}|_{V(G^i)} \equiv \phi^i$ for all $|V(F')| \le i \le |V(F)|-1$.
\end{enumerate}
\end{claim}
We prove this by induction, as an invariant of our algorithm.
Due to Lemma~\ref{lem:startgraphs}, we know that $F$ contains some $F'$ as an induced subgraph, so the claim holds for $i=|V(F')|$.

So assume that the claim is true for some $i$ with $|V(F')| \le i \le |V(F)|-1$. 
Let $P^i=(G^i,\mbox{triples}(P^i))$ and $\phi^i$ be as desired.
First assume that $G^i$ contains similar vertices $(u,v)$.
By Lemma~\ref{lem:color-trick}, $N_F(\phi^i(u)) \not\subseteq N_{(F-\phi^i(u))_{k-1}}(\phi^i(v))$.
Hence, there is some vertex $x \in V(F) \setminus \mbox{im}~\phi^i$ which is adjacent to $\phi^i(u)$ in $F$, but not to $\phi^i(v)$ in $(F-\phi^i(u))_{k-1}$.
Here, $\mbox{im}~\phi^i$ denotes the image of $\phi^i$.
Thus, we may safely define $\phi^{i+1}$ and the pair $P^{i+1}=(G^{i+1},\mbox{triples}(P^{i+1}))$ as follows.
\begin{enumerate}[(a)]
	\item $V(G^{i+1}) = V(G^i) \cup \{y\}$ and $G^{i+1}|V(G^i)=G^i$, where $y$ is the new vertex added by the algorithm in line~\ref{line:dominatedvertex},
	\item $\phi^{i+1}|_{V(G^i)} \equiv \phi^i$, $\phi^{i+1}(y)=x$,
	\item $\phi^{i+1}$ is an injective homomorphism from $G^{i+1}$ to $F$, and
	\item $P^{i+1}=P^i\cup (u,v,y)$.
\end{enumerate}
By construction, for each triple $(u,v,x) \in \mbox{triples}(P^{i+1})$ there is some $(k-1)$-coloring of $F-\phi^{i+1}(u)$ where $\phi^{i+1}(v)$ and $\phi^{i+1}(x)$ receive the same color.
Moreover, Construct$(P^{i+1})$ is called in line~\ref{line:dominatedvertex}.

If $G^i$ does not contain similar vertices, we proceed in complete analogy.
The only difference is that we have to set $P^{i+1}=P^i$.
\medskip

The remainder of the proof of Claim~\ref{clm:invariant-advanced} is literally the same as that of Claim~\ref{clm:invariant}.
\end{proof}

These 4-critical $(P_7,C_4)$-free and $(P_7,C_5)$-free graphs are shown in Figure~\ref{fig:animals_N3P7C4} and~\ref{fig:animals_N3P7C5}, respectively.
The adjacency lists of these graphs can be found in Appendix~2.

We also determined that there are exactly 35 4-vertex-critical $(P_7,C_4)$-free graphs, 164 4-vertex-critical $(P_8,C_4)$-free graphs, and 27 4-vertex-critical $(P_7,C_5)$-free graphs (details on how we obtained these graphs can be found in the Appendix). 

The Tables~\ref{table:counts_animals_N3P7C4},~\ref{table:counts_animals_N3P8C4}, and~\ref{table:counts_animals_N3P7C5} give an overview of the counts of the 4-critical and 4-vertex-critical graphs mentioned in Theorem~\ref{thm:N3P7Ck}.

\begin{figure}[h!t]
\centering
\includegraphics[width=.15\textwidth]{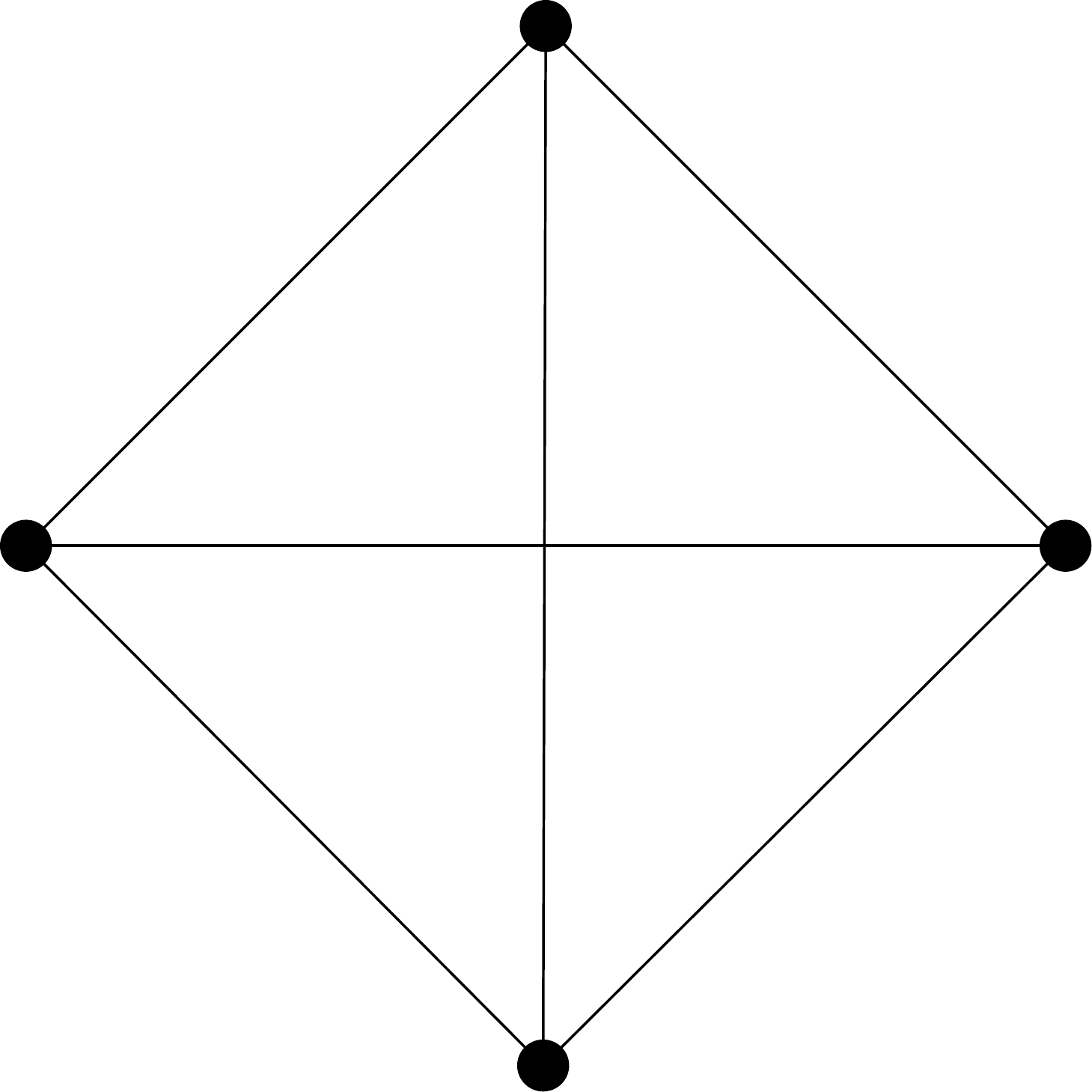}\ \ 
\includegraphics[width=.15\textwidth]{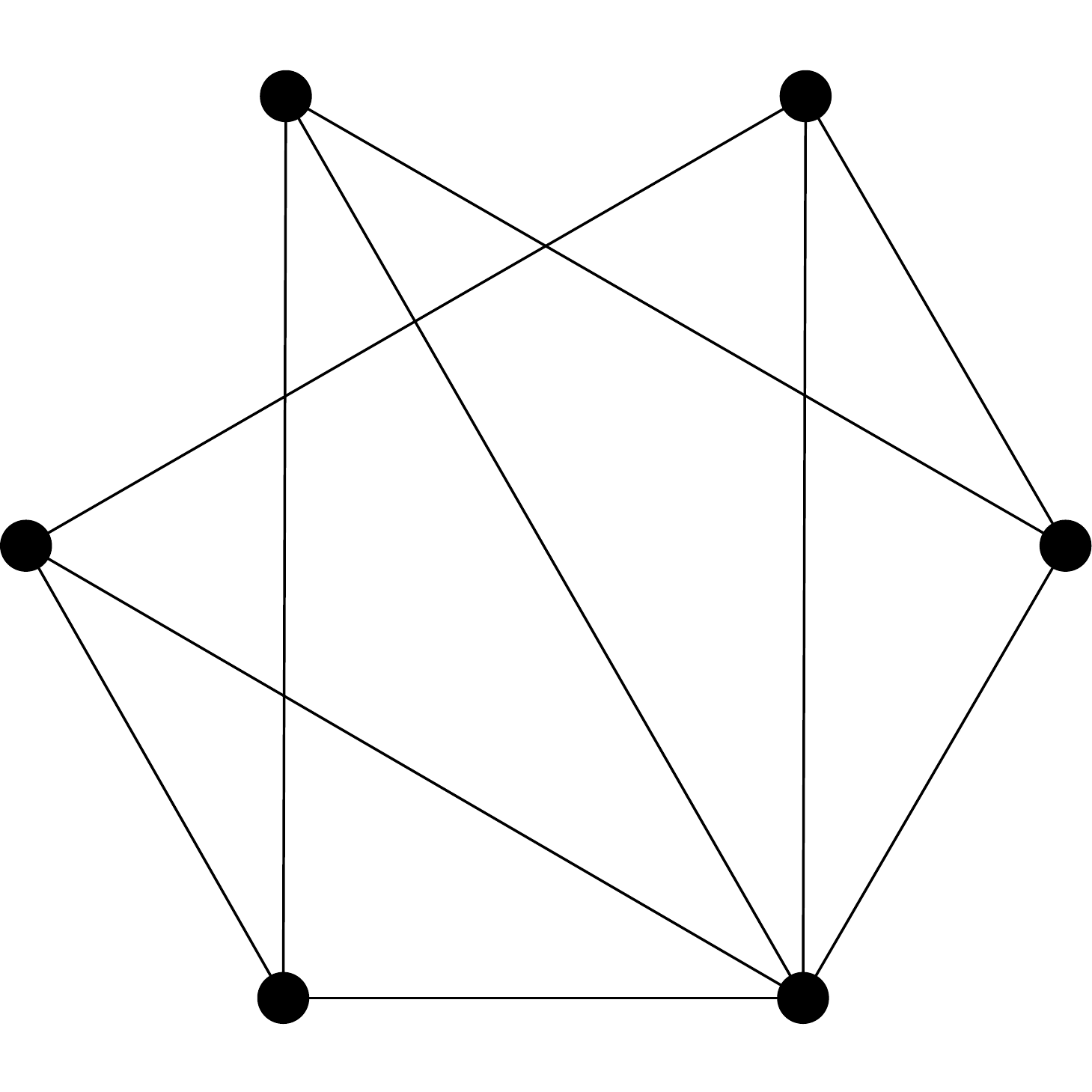}\ \ 
\includegraphics[width=.15\textwidth]{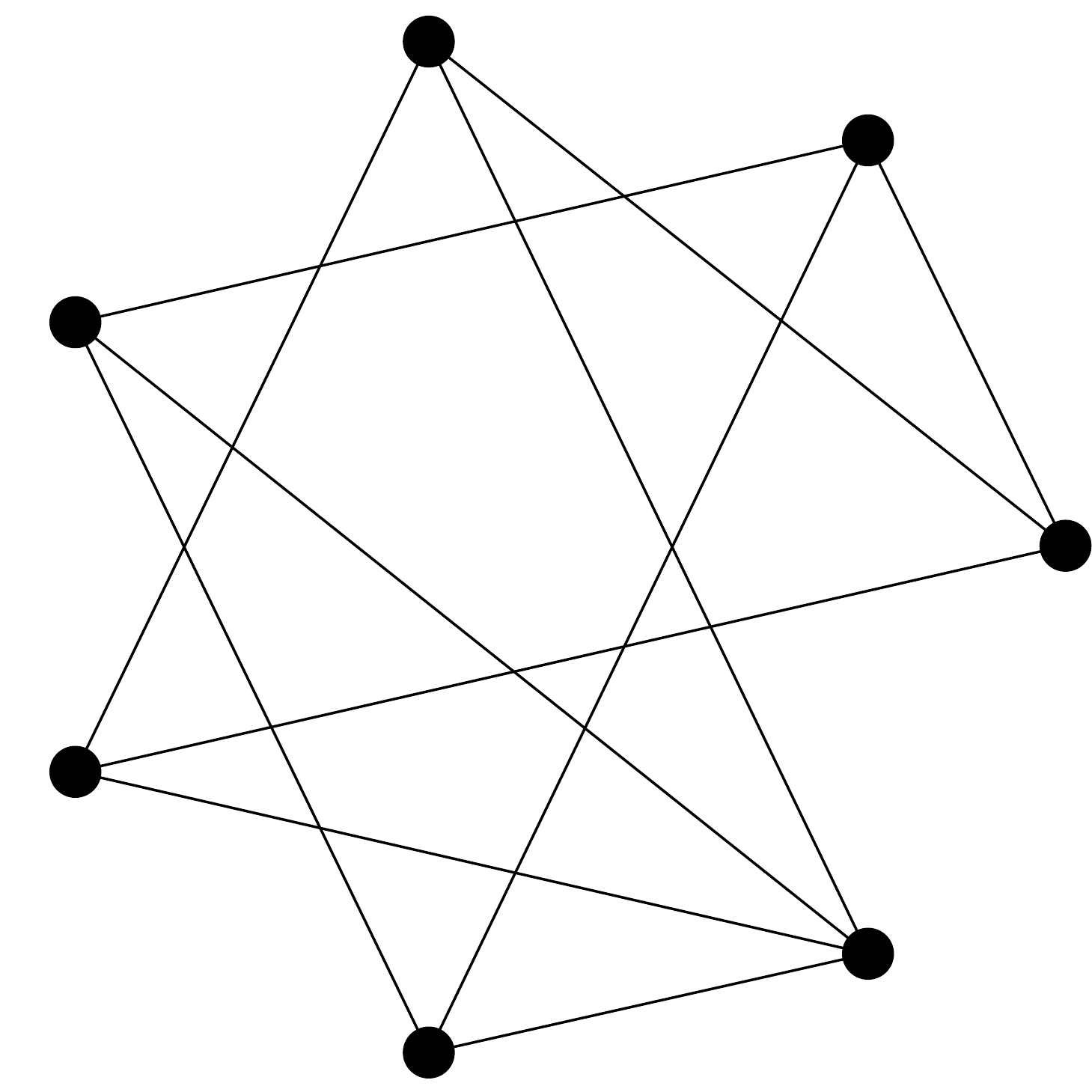}\ \ 
\includegraphics[width=.15\textwidth]{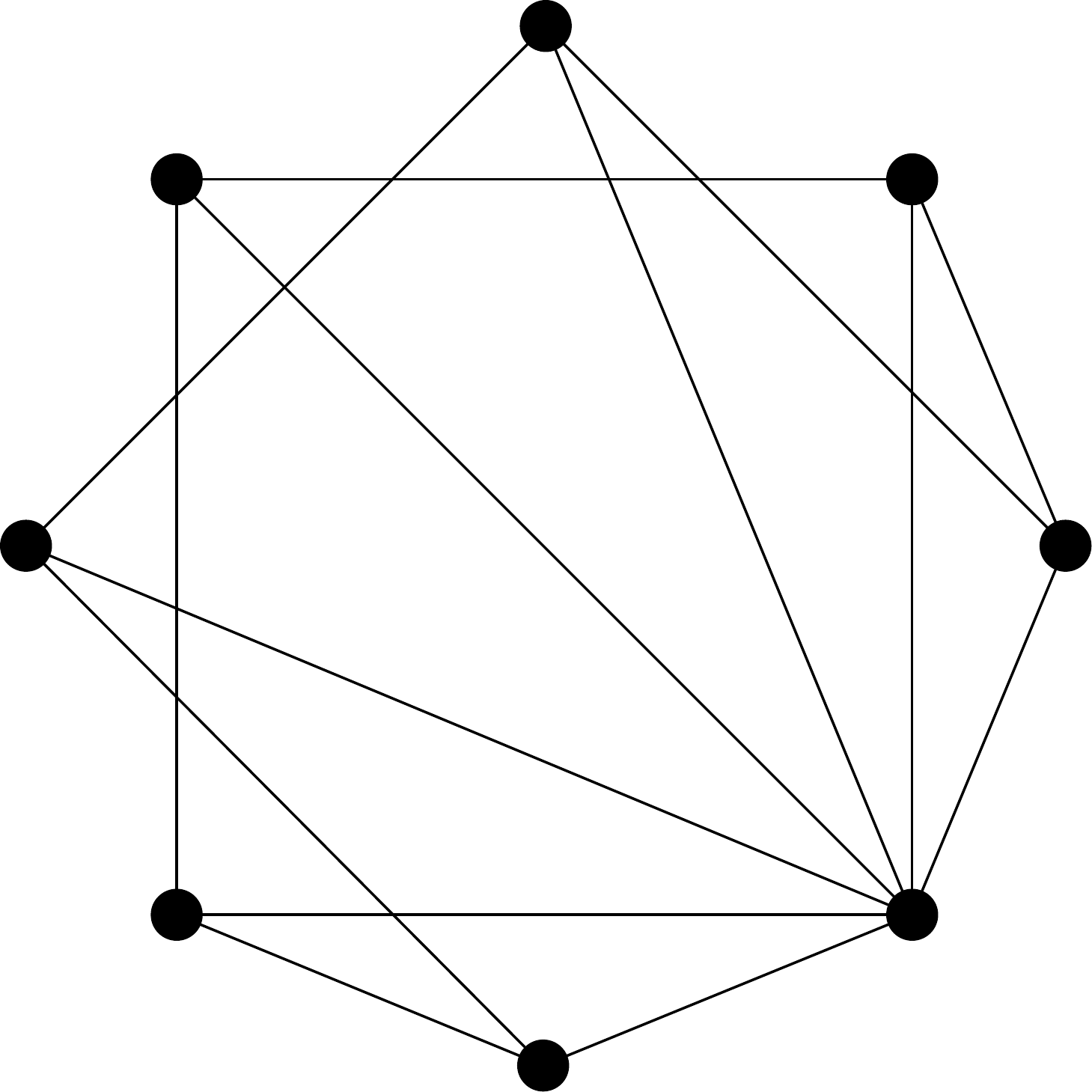}\ \ 
\includegraphics[width=.15\textwidth]{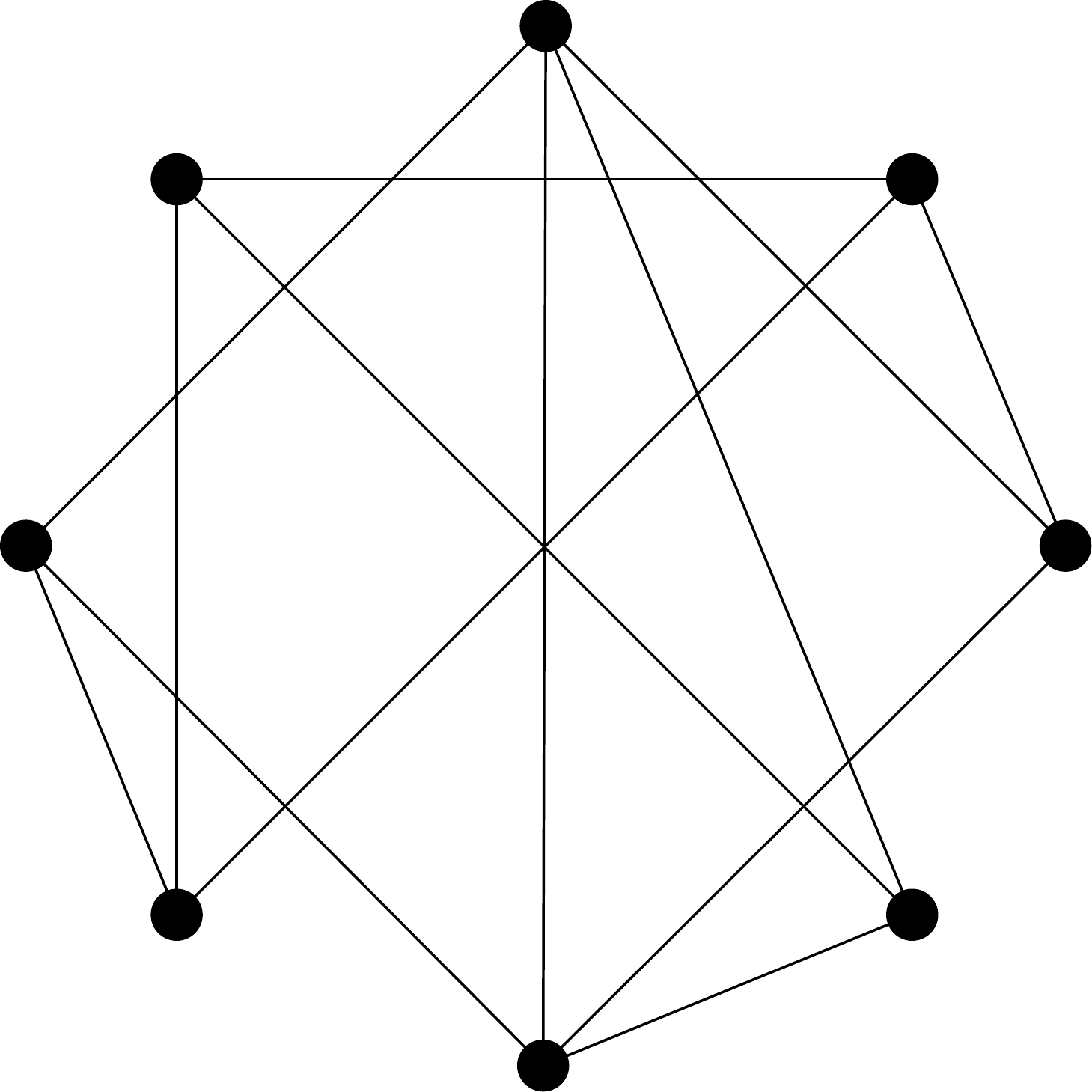}\ \ 
\includegraphics[width=.15\textwidth]{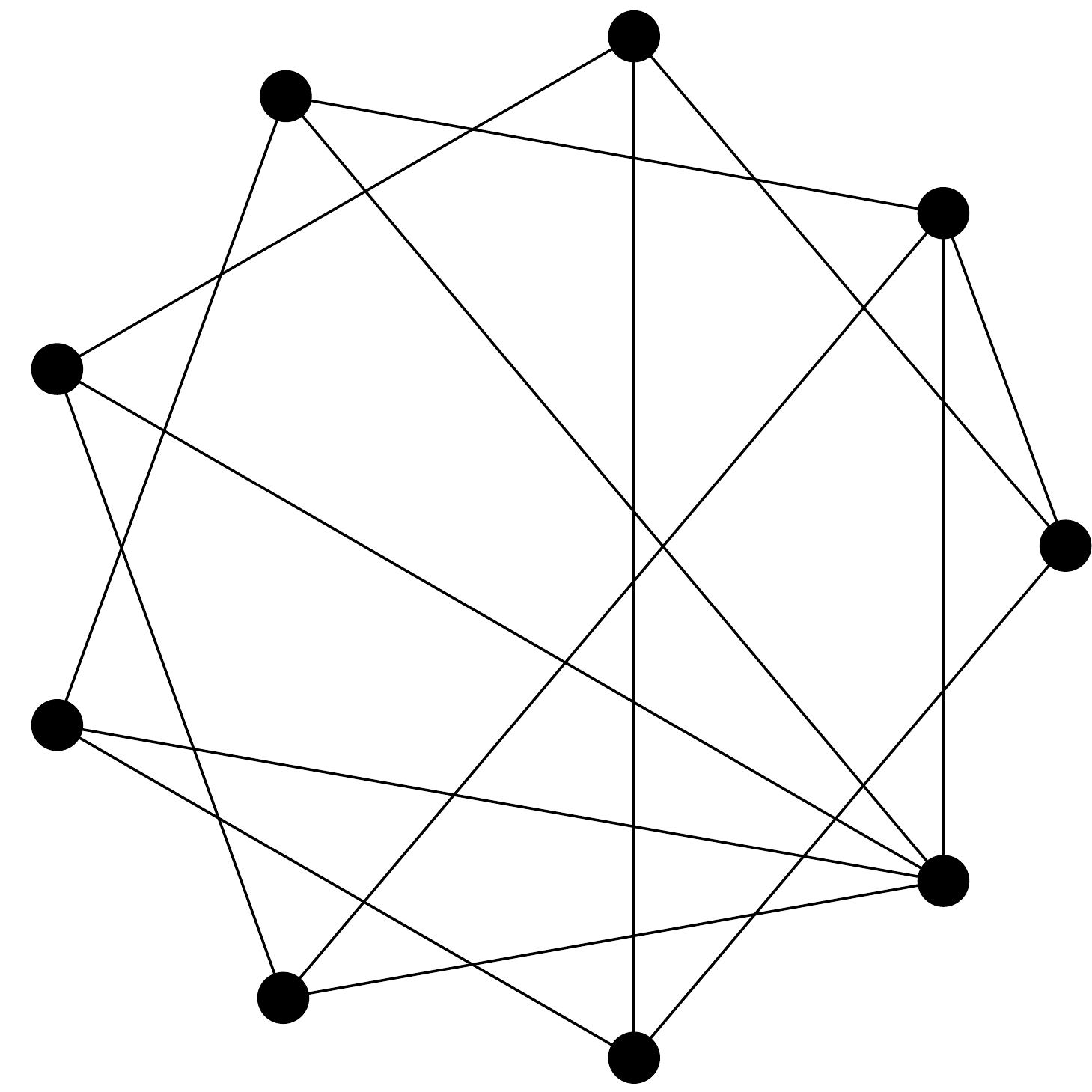}

\bigskip

\includegraphics[width=.15\textwidth]{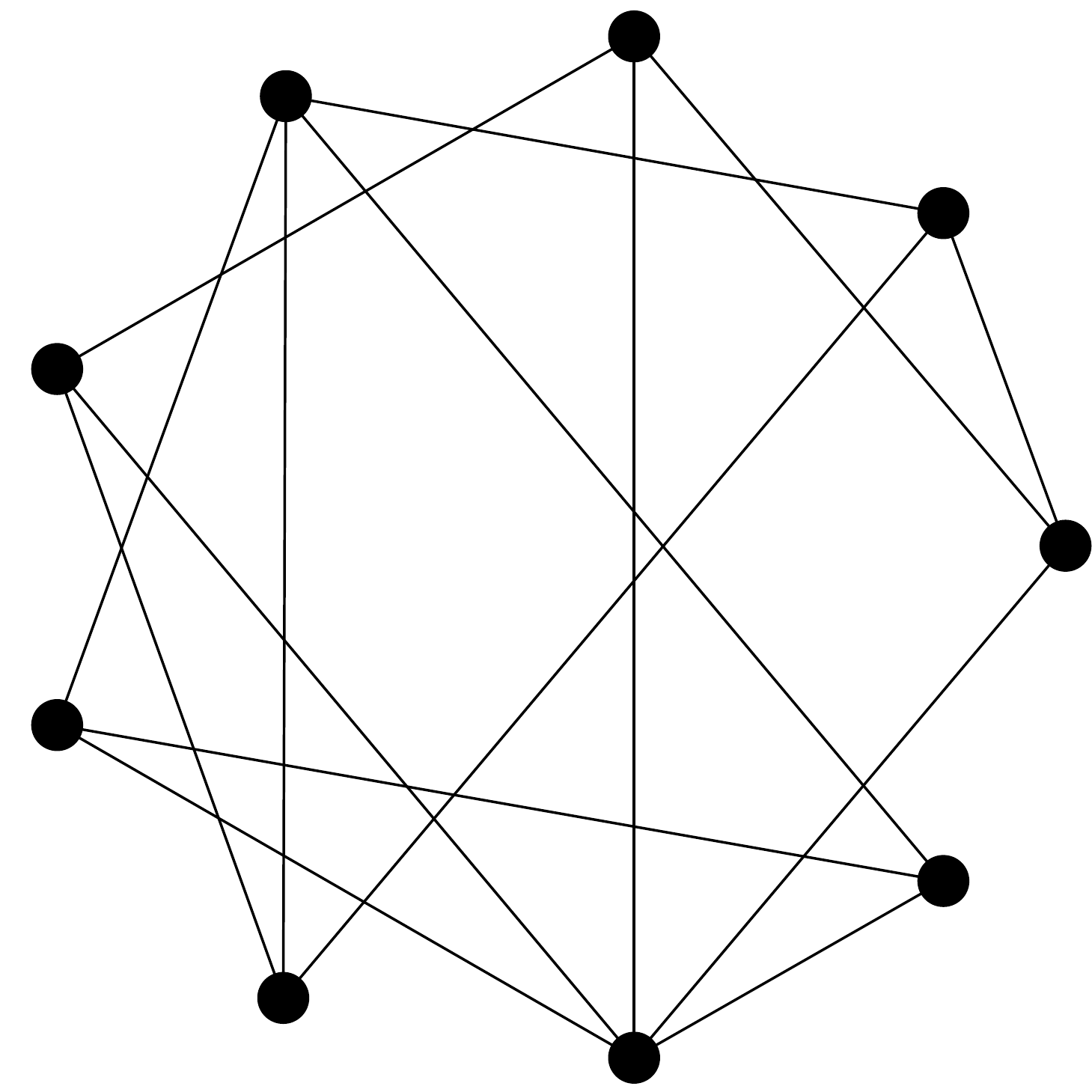}\ \ 
\includegraphics[width=.15\textwidth]{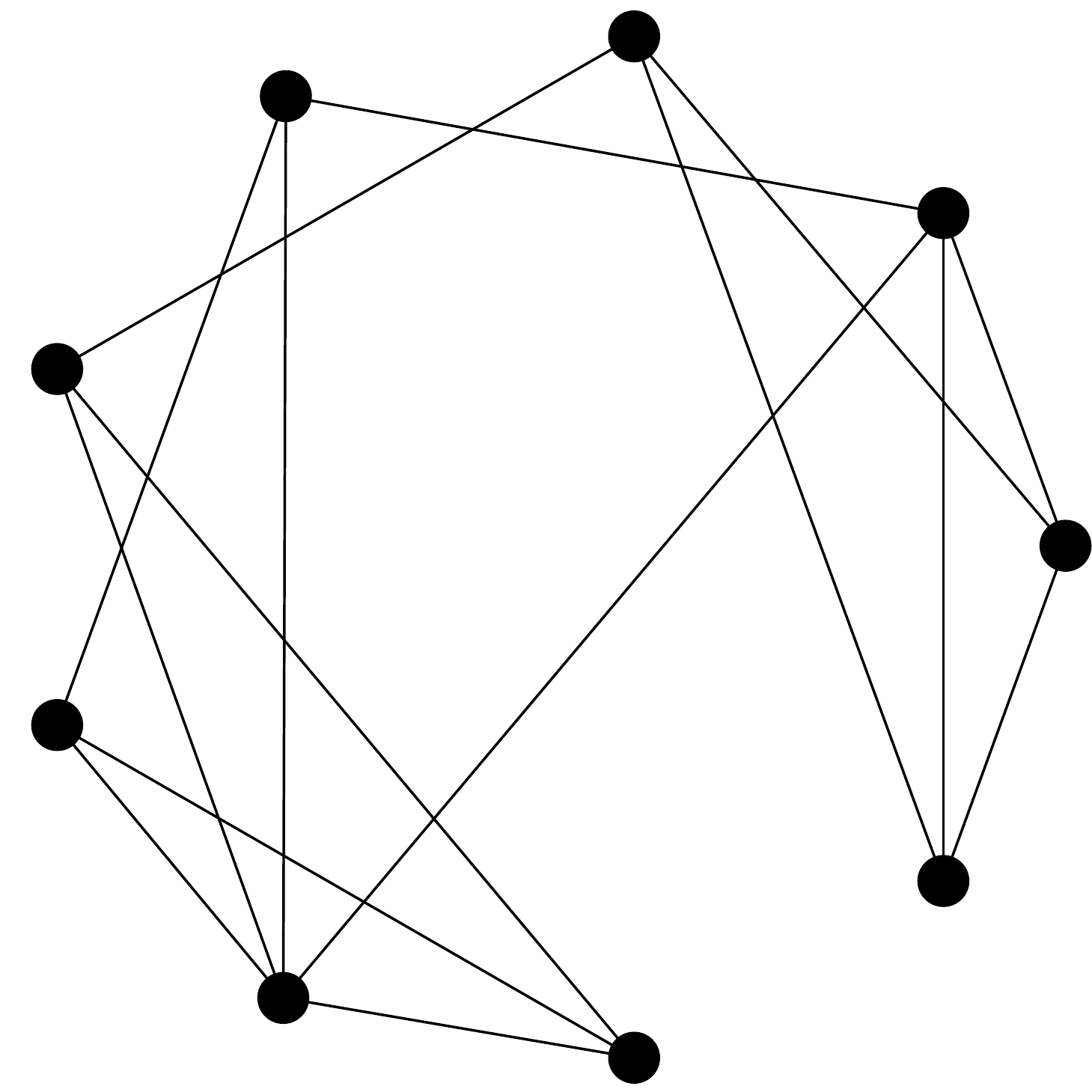}\ \ 
\includegraphics[width=.15\textwidth]{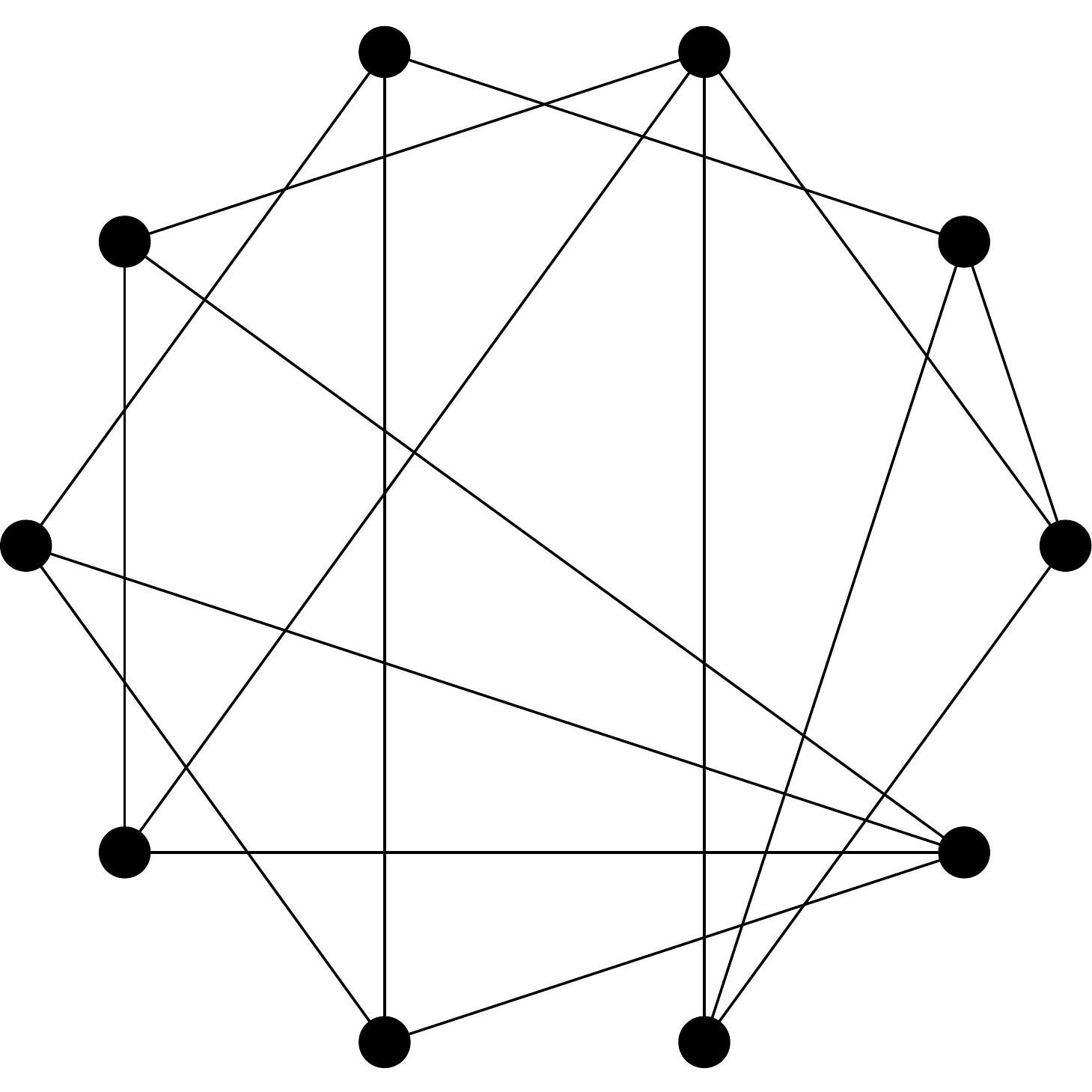}\ \ 
\includegraphics[width=.15\textwidth]{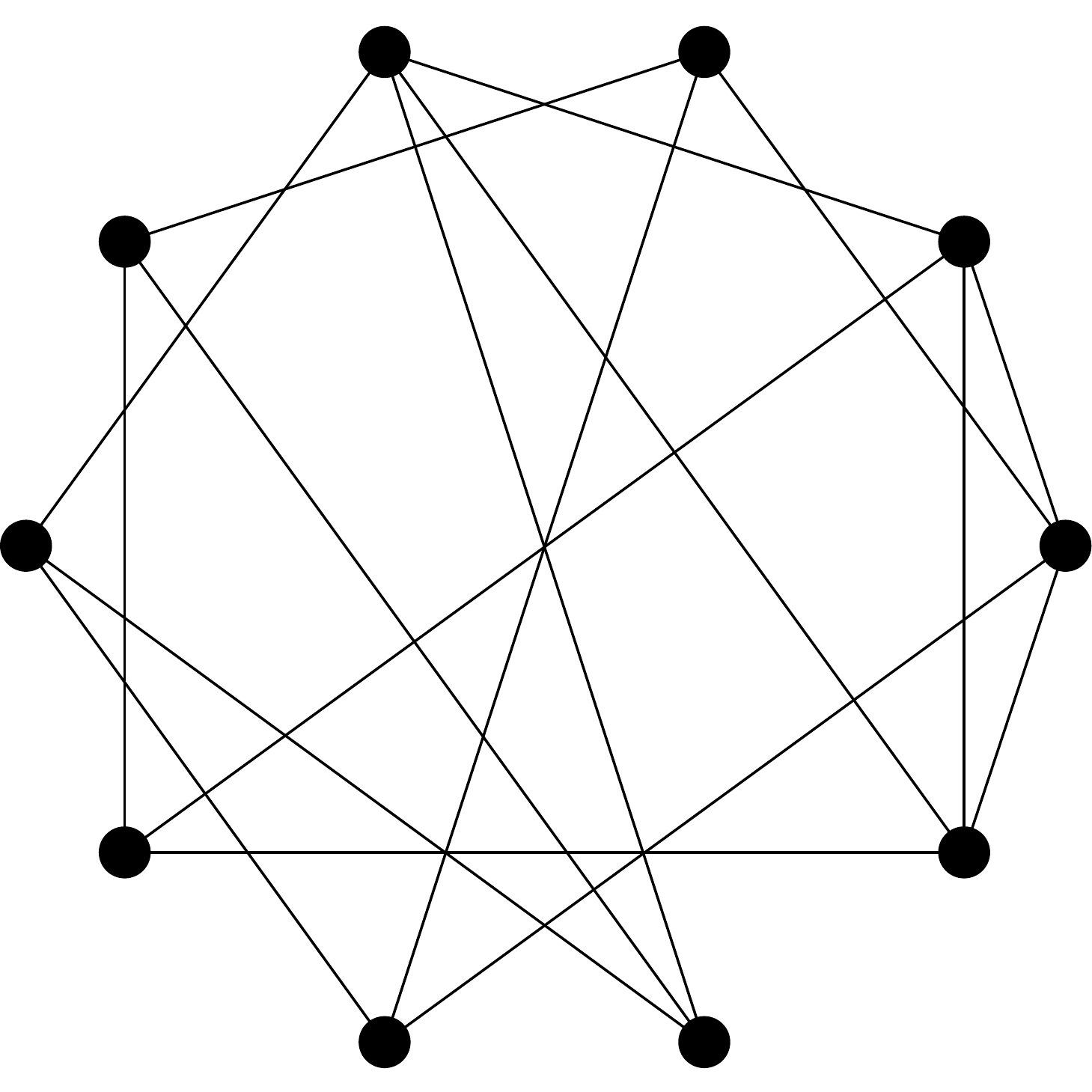}\ \ 
\includegraphics[width=.15\textwidth]{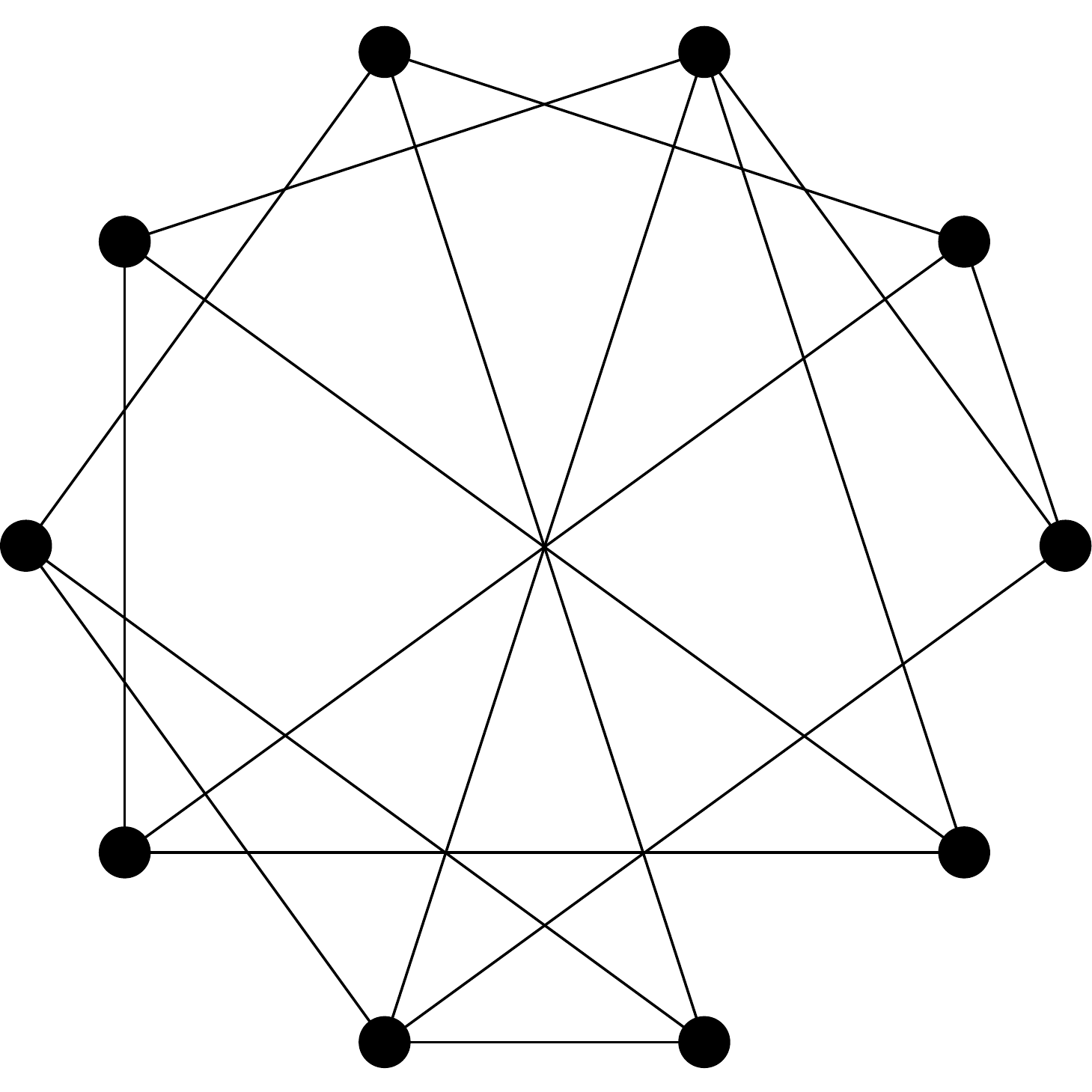}\ \ 
\includegraphics[width=.15\textwidth]{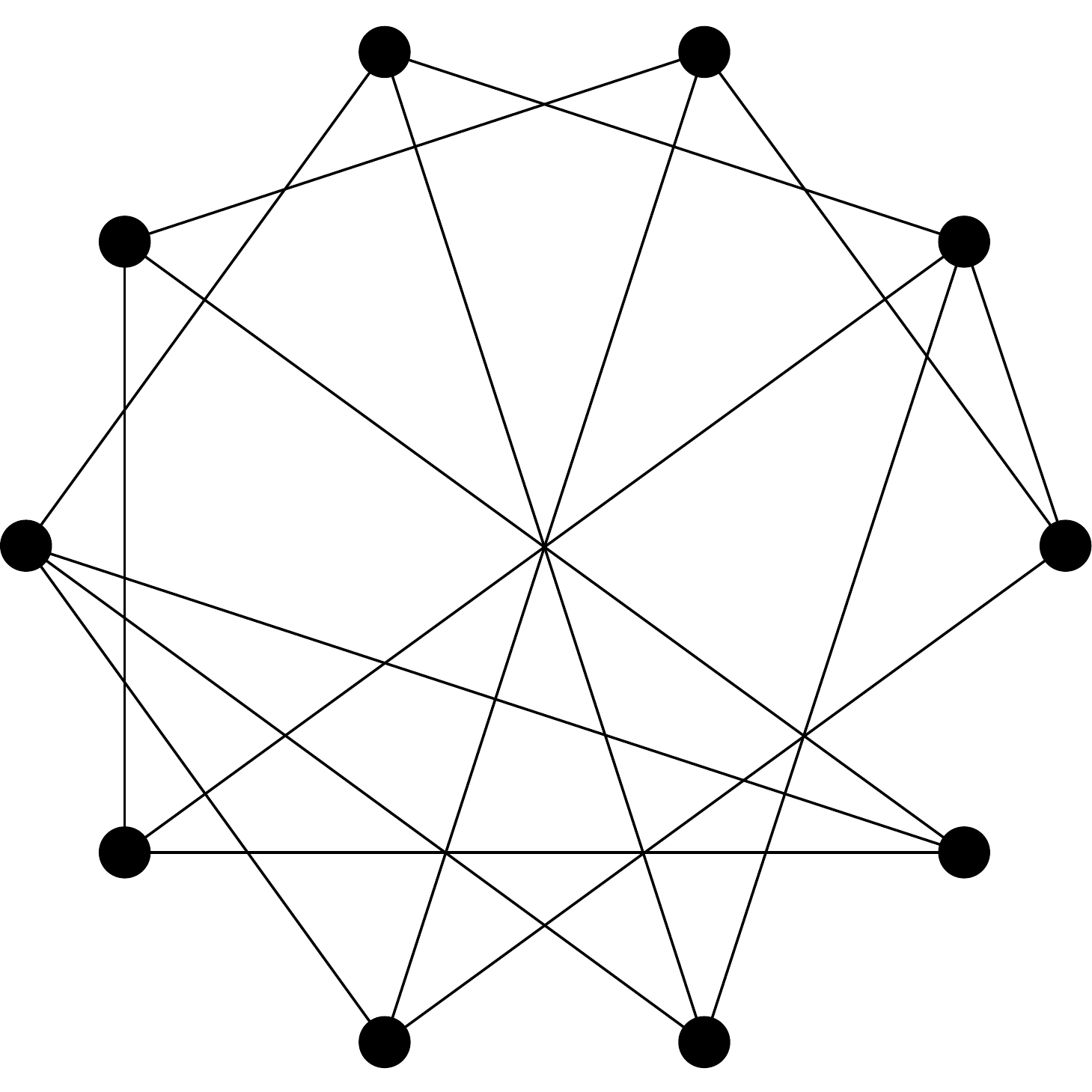}

\bigskip

\includegraphics[width=.15\textwidth]{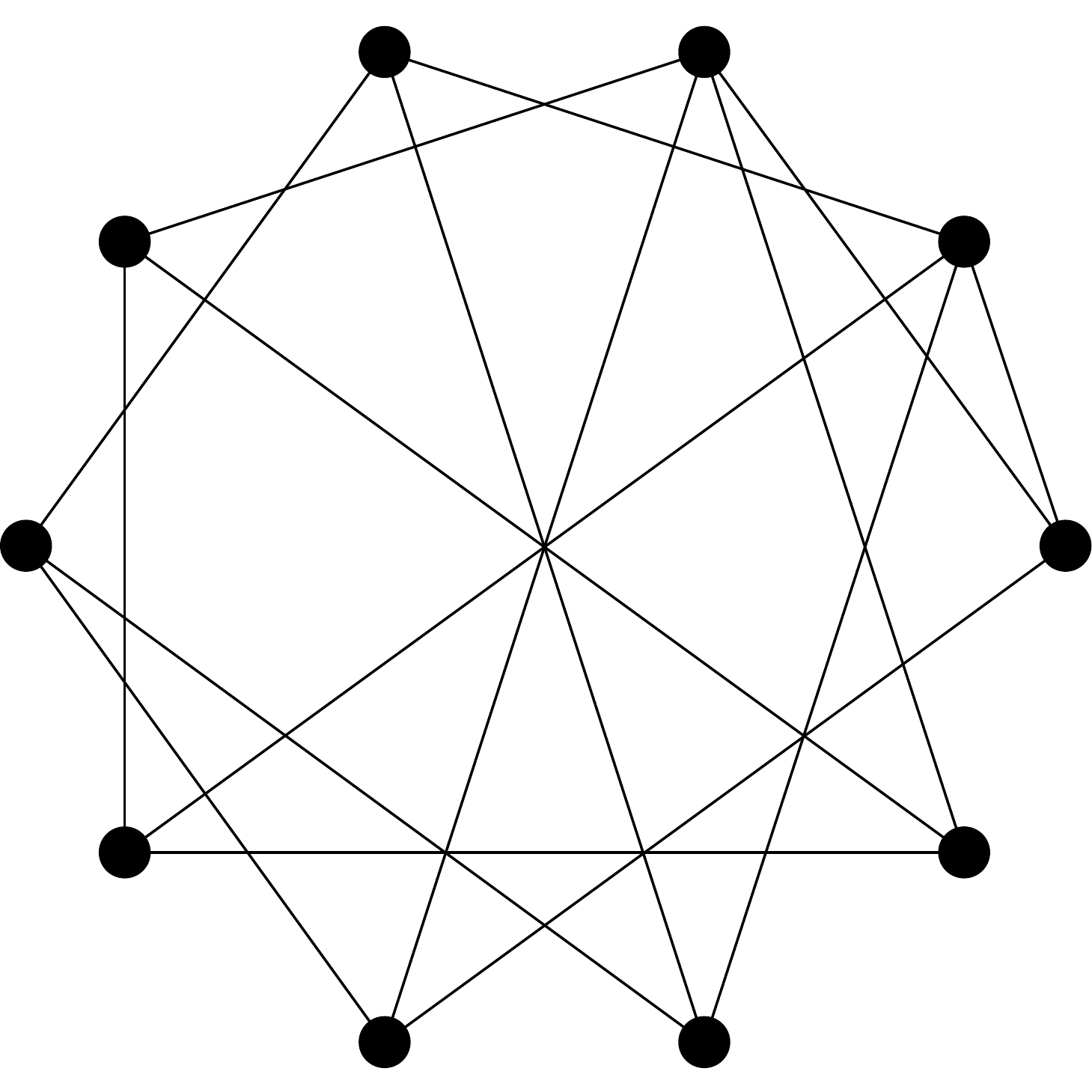}\ \ 
\includegraphics[width=.15\textwidth]{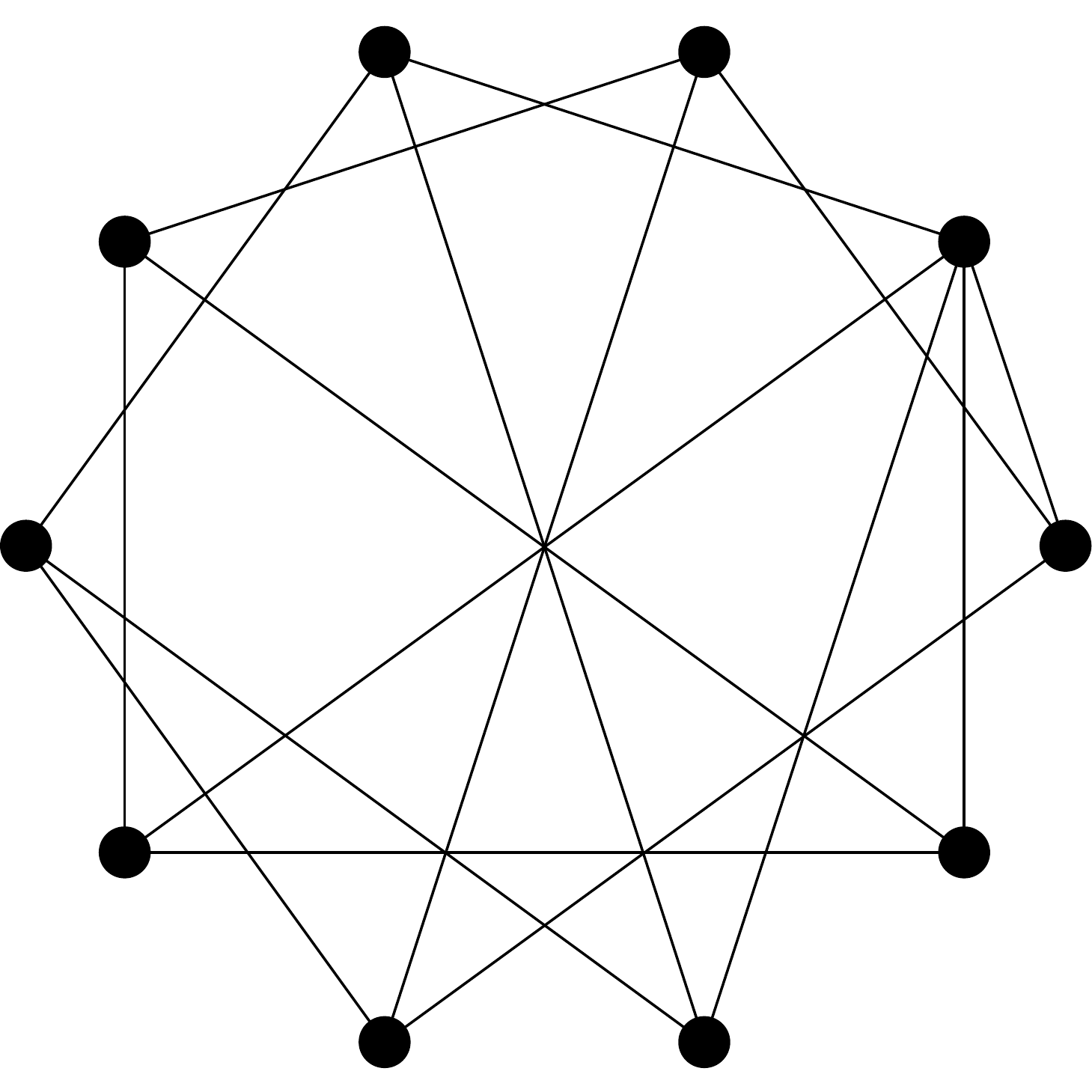}\ \ 
\includegraphics[width=.15\textwidth]{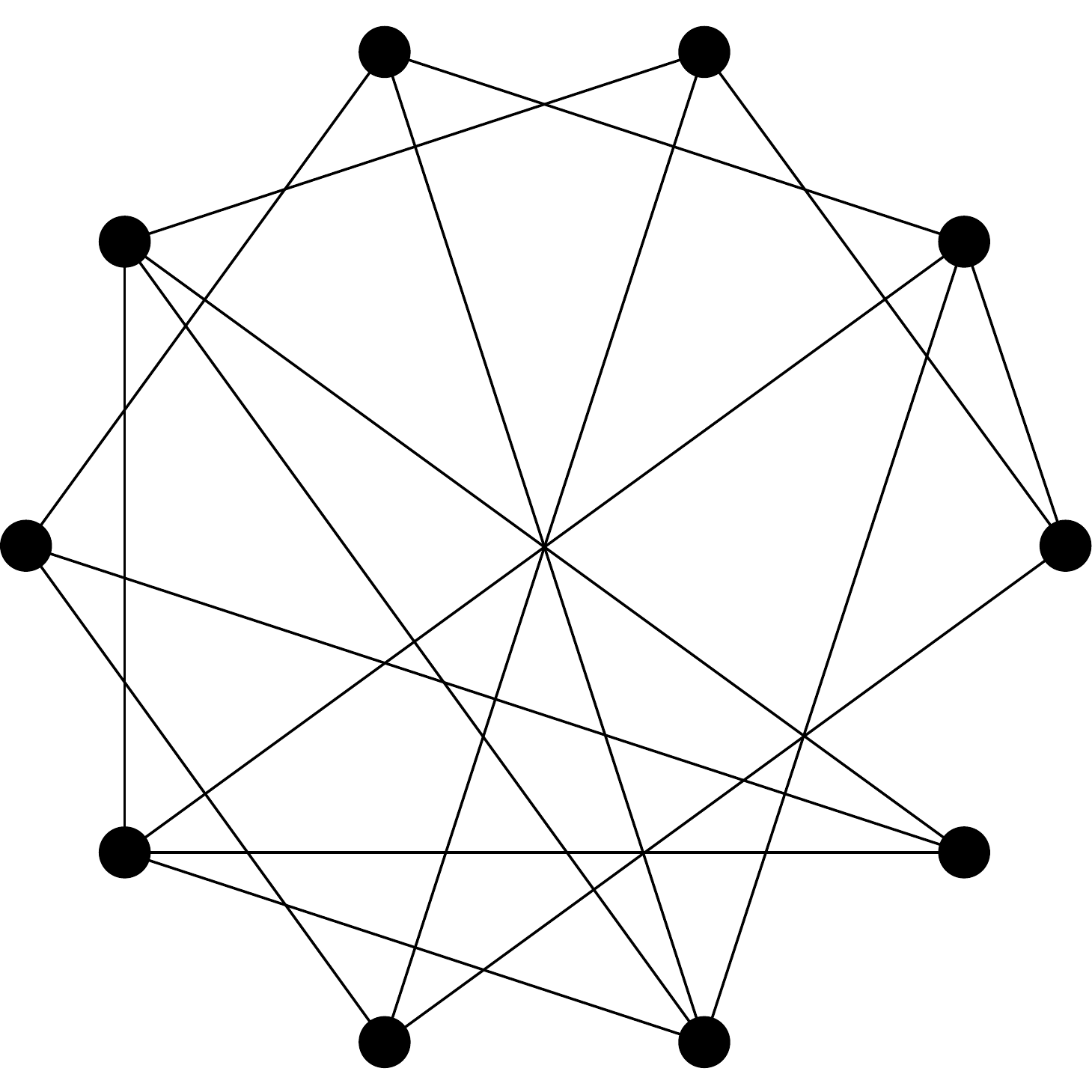}\ \ 
\includegraphics[width=.15\textwidth]{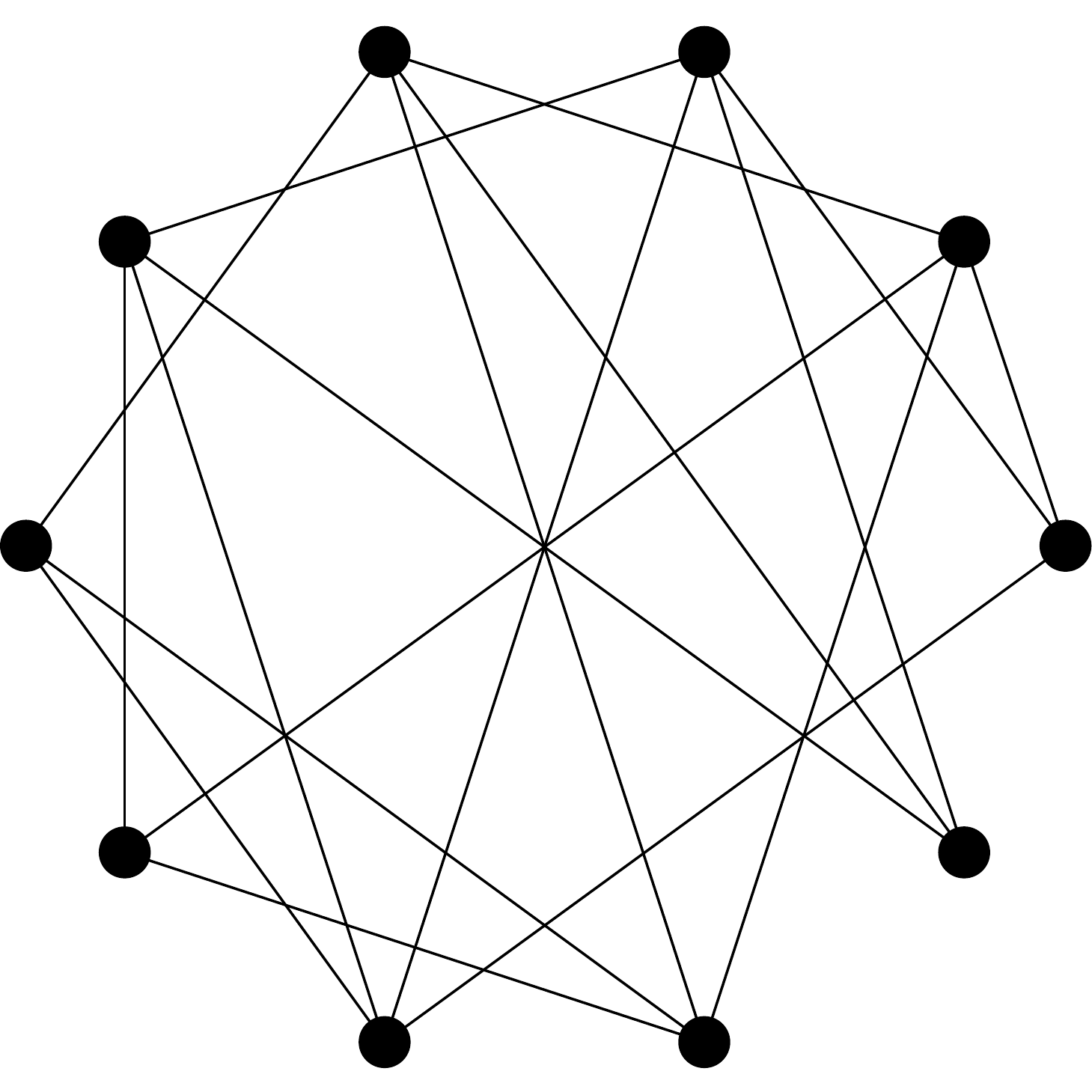}\ \ 
\includegraphics[width=.15\textwidth]{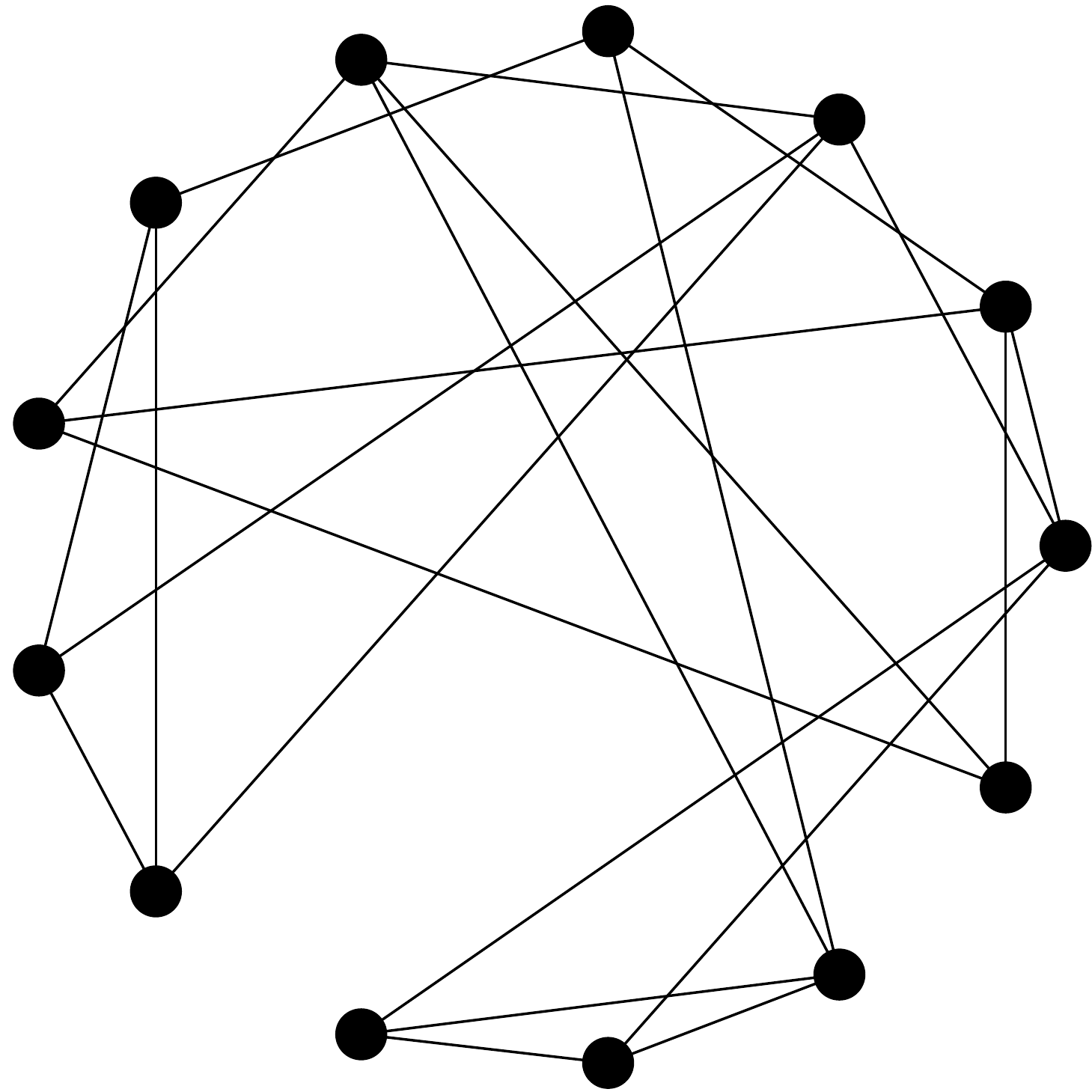}

\caption{All 17 4-critical $(P_7,C_4)$-free graphs.}
\label{fig:animals_N3P7C4}

\end{figure}

\begin{figure}[h!t]
\centering
\includegraphics[width=.15\textwidth]{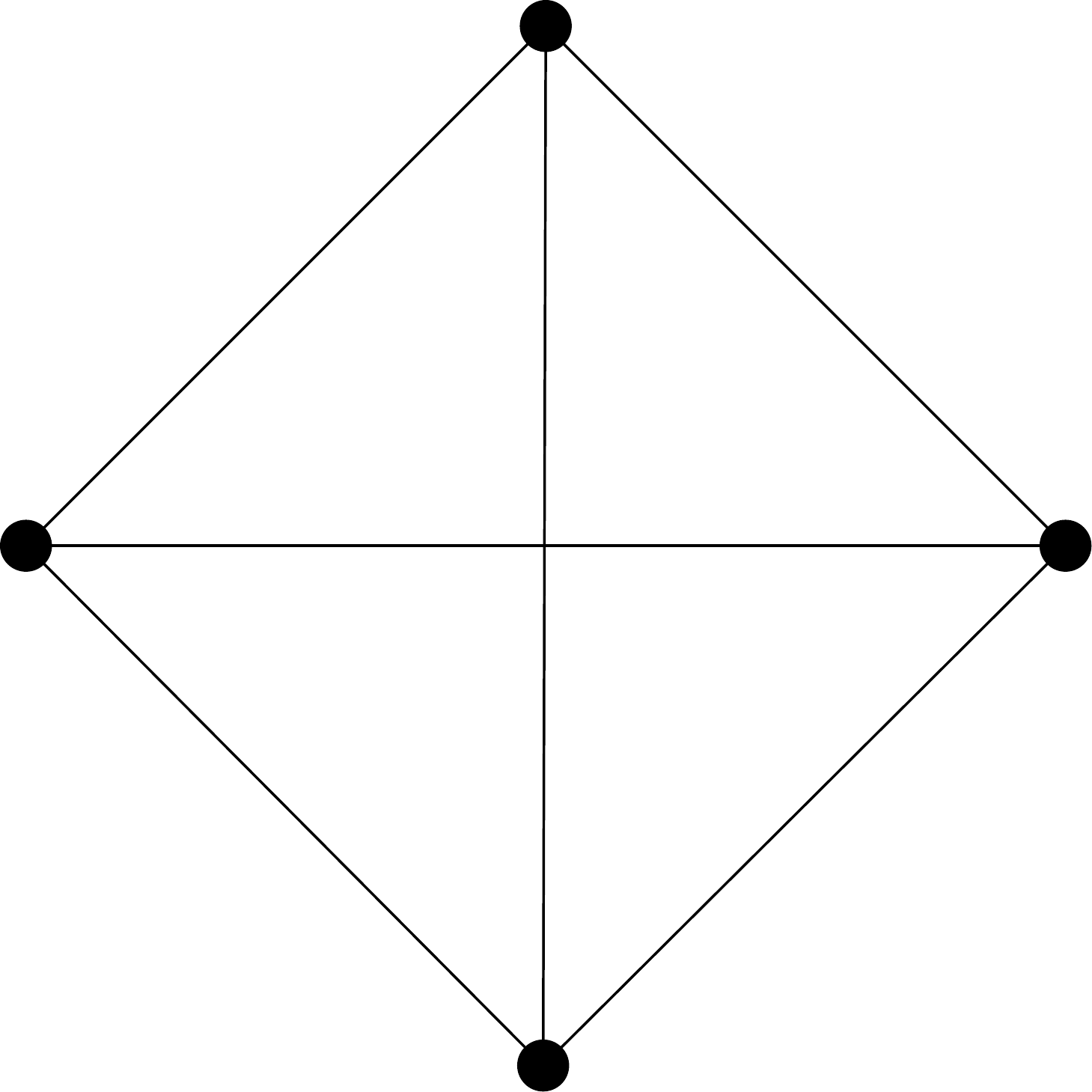}\ \ 
\includegraphics[width=.15\textwidth]{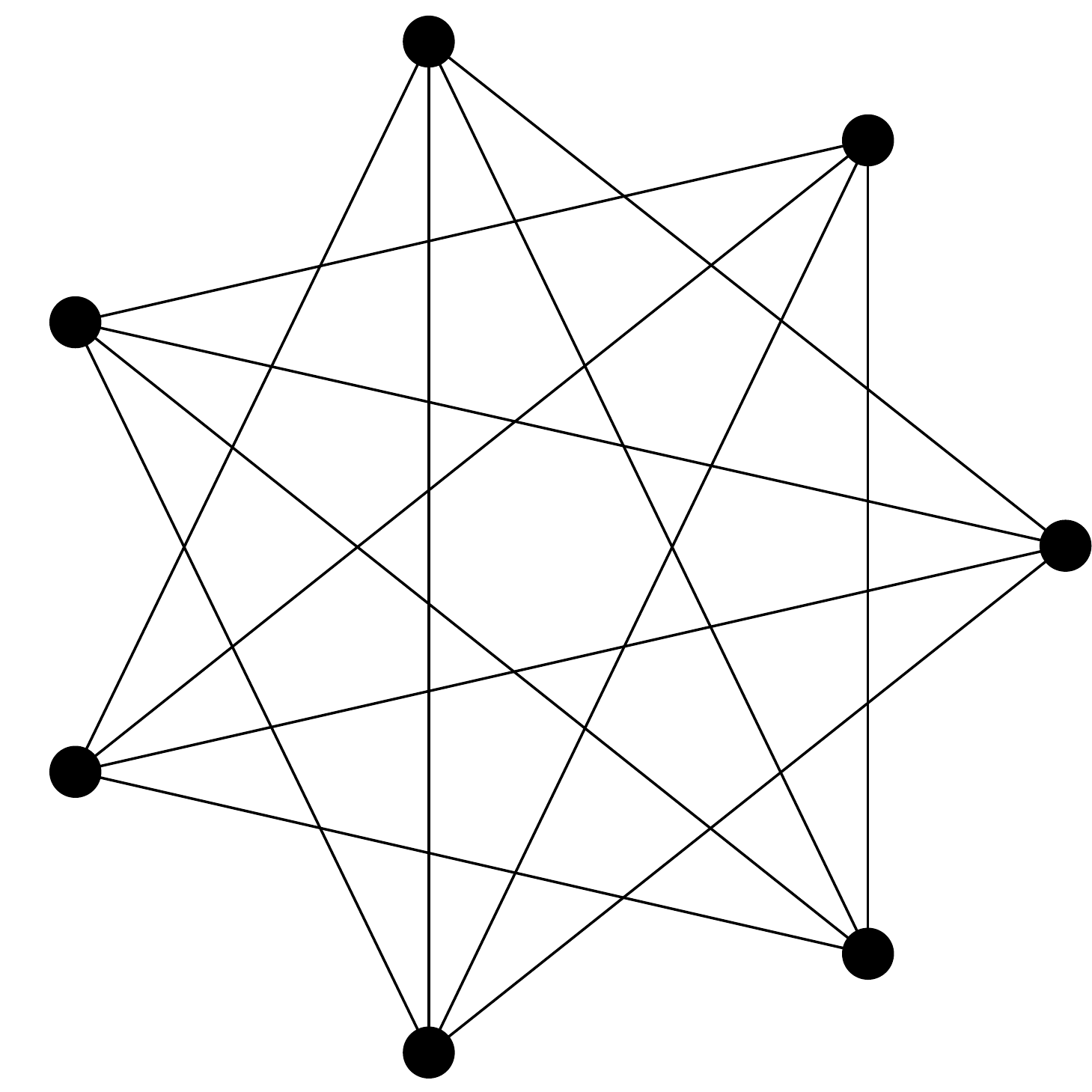}\ \ 
\includegraphics[width=.15\textwidth]{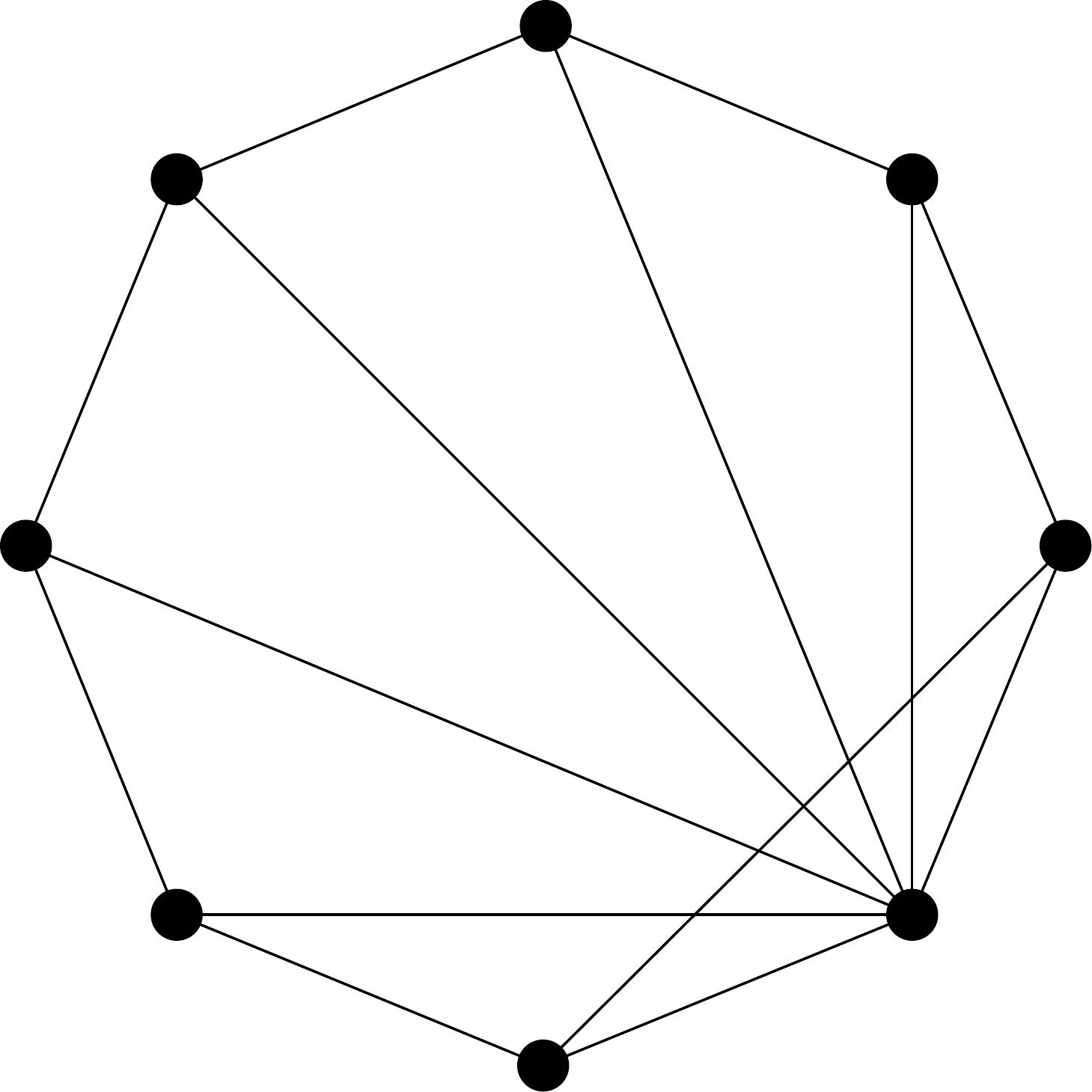}\ \ 
\includegraphics[width=.15\textwidth]{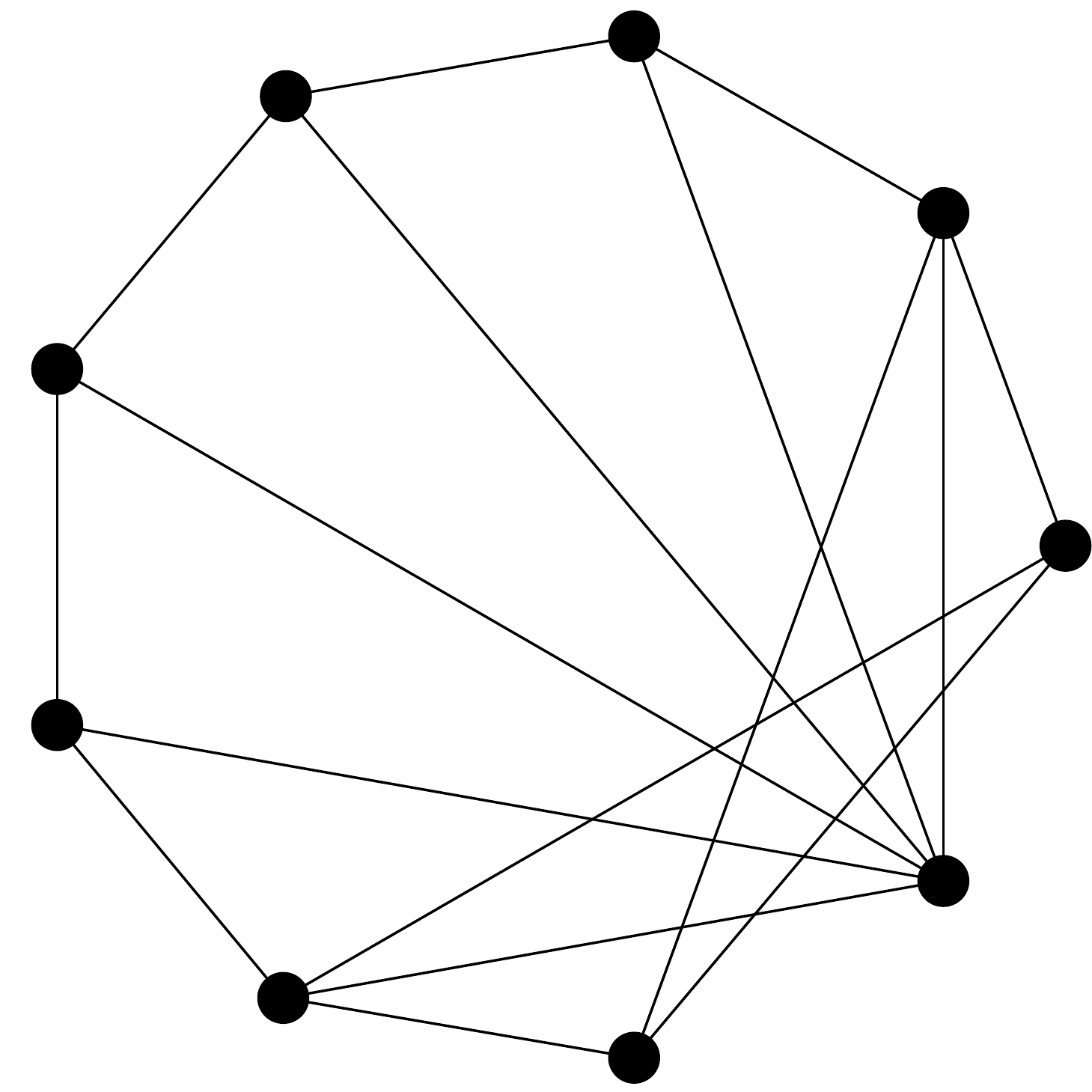}\ \ 
\includegraphics[width=.15\textwidth]{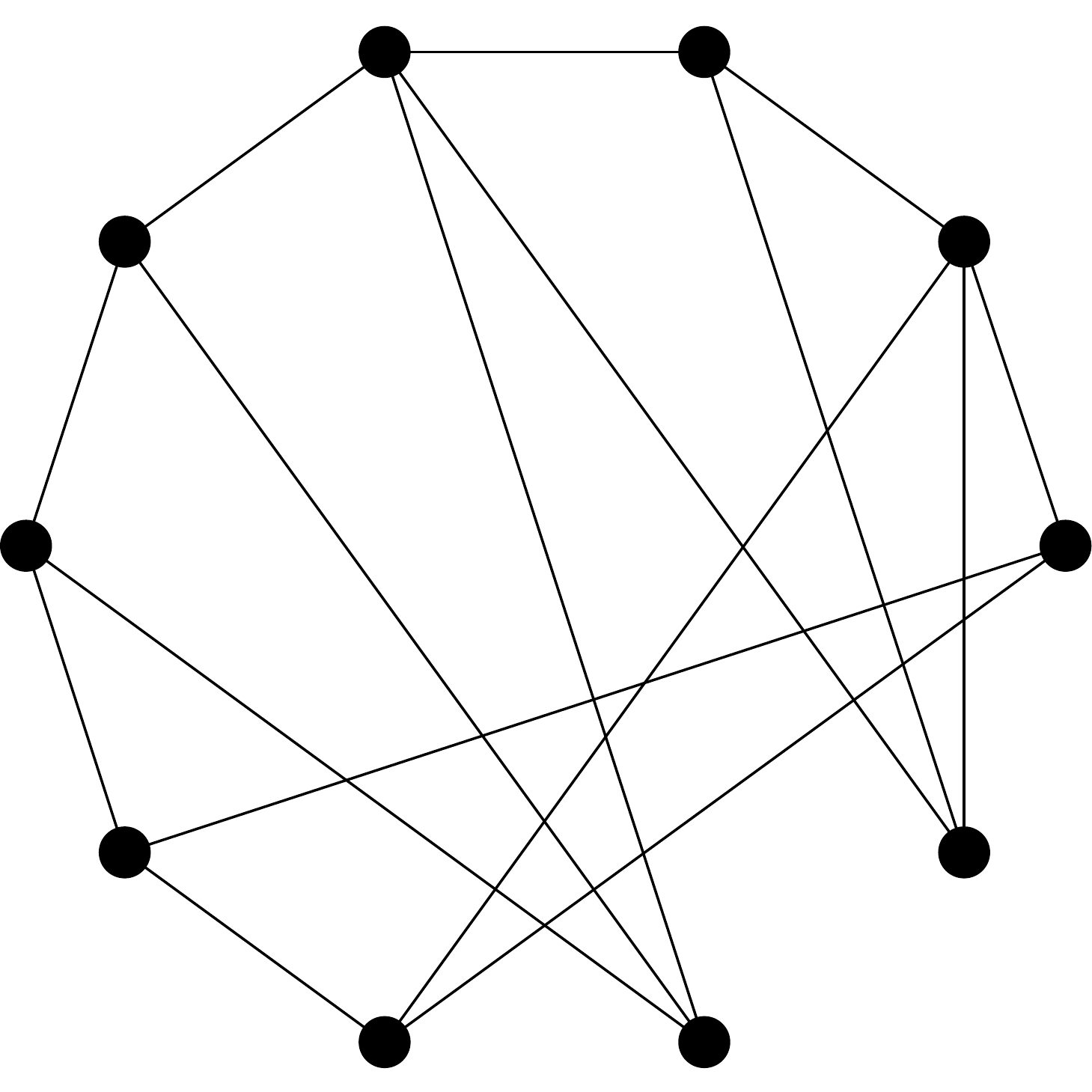}\ \ 
\includegraphics[width=.15\textwidth]{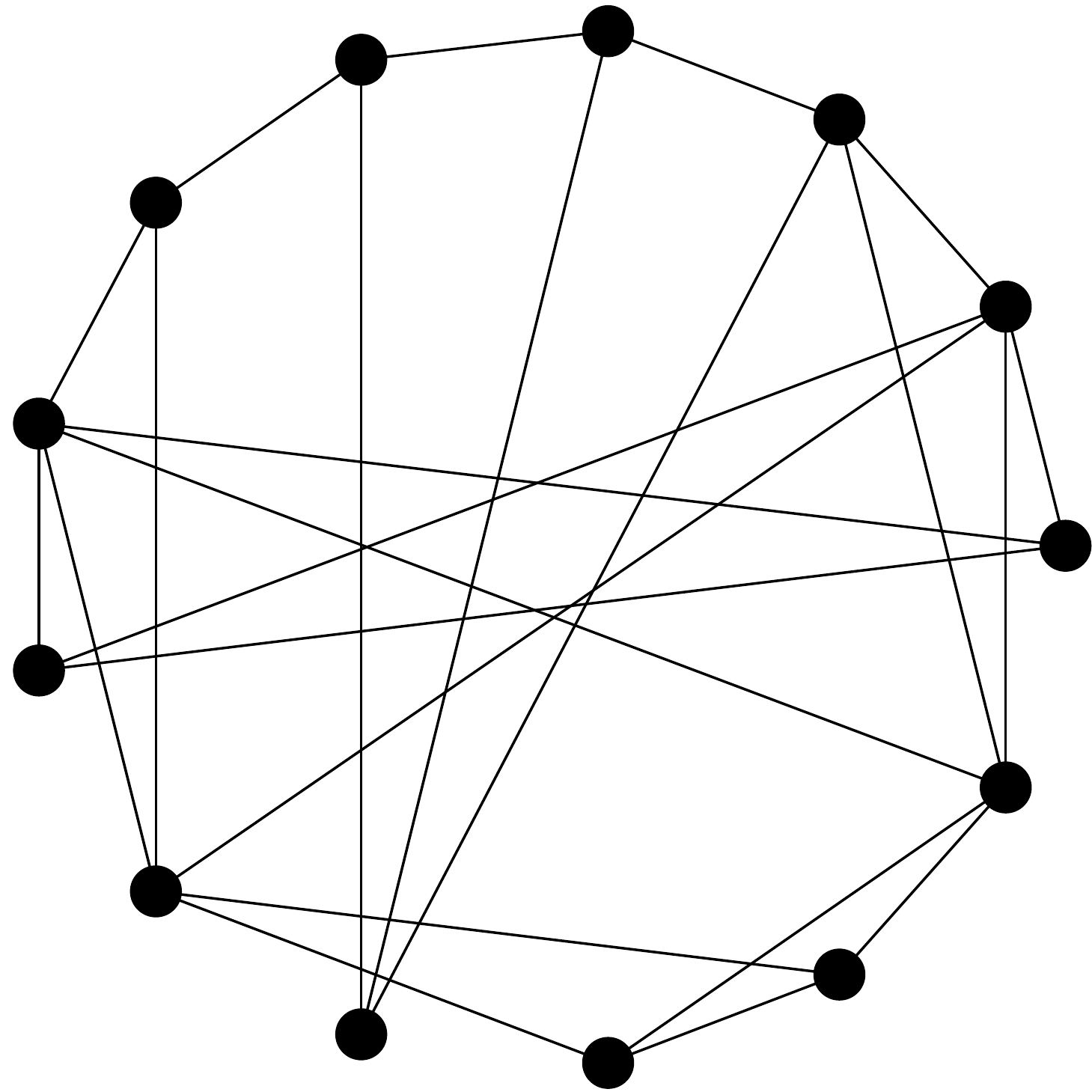}

\caption{All 6 4-critical $(P_7,C_5)$-free graphs.}
\label{fig:animals_N3P7C5}

\end{figure}

%

\begin{table}[ht!]
\centering
\begin{tabular}{| l | c c c c c c c | c |}
\hline 
Vertices 								& 4		& 6		&	7		&	8		&	9		&	10	& 13 	& total		\\ \hline
Critical graphs 				& 1		& 1		& 1		& 2		& 3		& 8		& 1		& 17			\\
Vertex-critical graphs 	& 1		& 1		& 1		& 2		& 4		& 24	& 2		& 35			\\
\hline 
\end{tabular}
\caption{Counts of all 4-critical and 4-vertex-critical $(P_7,C_4)$-free graphs.}

\label{table:counts_animals_N3P7C4}
\end{table}


%

\begin{table}[ht!]
\centering
\begin{tabular}{| l | c c c c c c c c c c | c |}
\hline 
Vertices 								& 4		& 6		&	7		&	8		&	9		&	10		& 11		& 12 		&	13		&	14		& total		\\ \hline
Critical graphs 				& 1		& 1		& 1		& 2		& 3		& 15		& 28		& 34		& 8			& 1			& 94			\\
Vertex-critical graphs 	& 1		& 1		& 1		& 2		& 4		& 33		& 54		& 53		&	14		& 1			& 164			\\
\hline 
\end{tabular}
\caption{Counts of all 4-critical and 4-vertex-critical $(P_8,C_4)$-free graphs.}

\label{table:counts_animals_N3P8C4}
\end{table}


%

\begin{table}[ht!]
\centering
\begin{tabular}{| l | c c c c c c | c |}
\hline 
Vertices 								& 4		& 7		&	8		&	9		&	10	&	13	& total		\\ \hline
Critical graphs 				& 1		& 1		& 1		& 1		& 1		& 1		& 6				\\
Vertex-critical graphs 	& 1		& 1		& 1		& 6		& 17	& 1		& 27			\\
\hline 
\end{tabular}
\caption{Counts of all 4-critical and 4-vertex-critical $(P_7,C_5)$-free graphs.}

\label{table:counts_animals_N3P7C5}
\end{table}

We did not succeed in solving the $(P_7,C_3)$-free case. However, we were able to determine all 4-critical $(P_7,C_3)$-free graphs with at most 35 vertices. These are shown in Figure~\ref{fig:animals_N3P7C3}, the counts are shown in Table~\ref{table:counts_animals_N3P7C3} and their adjacency lists can be found in Appendix~2. Since the largest 4-critical $(P_7,C_3)$-free graph up to 35 vertices has only 16 vertices, we conjecture the following.

\begin{conjecture}\label{conj:N3P7C3}
The seven graphs from Figure~\ref{fig:animals_N3P7C3} are the only 4-critical $(P_7,C_3)$-free graphs.
\end{conjecture}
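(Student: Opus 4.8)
The plan is to stay inside the generation framework of this paper. By Theorem~\ref{lem:diamond-algo-correct}, if Algorithm~\ref{algo:init-algo} terminates for $k=4$ and $\mathcal H=\{P_7,C_3\}$ then it outputs \emph{exactly} the list of all $4$-critical $(P_7,C_3)$-free graphs, so the whole difficulty is termination --- which, as observed above, fails with the expansion rules currently implemented. I would attack this on two fronts that can be combined.

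\emph{An a priori size bound.} First I would try to prove that every $4$-critical $(P_7,C_3)$-free graph $G$ has at most $N$ vertices, for an explicit constant $N$. Such a $G$ is triangle-free (so every neighbourhood is independent), $2$-connected with $\delta(G)\ge 3$ by Lemma~\ref{lem:min_degree}, has no clique cutset by Lemma~\ref{lem:cutset}, and --- being connected and $P_7$-free --- has diameter at most $5$, since a shortest path on seven vertices would be an induced $P_7$. Fixing a vertex $v$ and the distance layers $L_0=\{v\},L_1,\dots,L_5$, triangle-freeness keeps each $L_i$ well behaved and $P_7$-freeness sharply limits induced paths zig-zagging between consecutive layers; the decisive extra ingredient must be $4$-criticality itself, namely that $G-u$ is $3$-colourable for every vertex $u$ together with the consequences of Lemma~\ref{lem:color-trick} and Lemma~\ref{lem:crowns}. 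I would use these to bound first the maximum degree and then the number of ``private'' vertices each $L_i$ can send to $L_{i+1}$, modelling the argument on the Gr\"otzsch-graph analysis of Randerath et al.~\cite{randerath_04} for the $(P_6,C_3)$-free case and on the girth-five arguments developed later in this paper (Section~\ref{sect:girth}).

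\emph{Finishing the computation.} Given a bound $N$, it suffices to run the size-increasing generation of Algorithm~\ref{algo:init-algo} up to $N$ vertices. The data already reported certifies that there are precisely the seven listed graphs on at most $35$ vertices, the largest having only $16$ vertices; hence it is enough to prove $N\le 35$, and if the bound one can actually establish is larger one simply pushes the exhaustive search to $N$ vertices using the optimisations of Section~\ref{sect:optimisations}. Alternatively, if one can justify a new \emph{valid} expansion rule --- a triangle-free, girth-exploiting strengthening of the weak-cycle rule (Lemma~\ref{lem:crowns}), or a rule that forces growth from a local configuration that no $4$-vertex-critical graph can contain --- then the algorithm would terminate outright and no size bound is needed; this fits the modular design emphasised in the paper and is the variant I would prototype first.

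The main obstacle is the size bound. Diameter $\le 5$ together with triangle-freeness does not by itself bound $|V(G)|$ (bipartite graphs of bounded diameter can be arbitrarily large), so the proof must genuinely convert the \emph{global} $3$-colourability of every $G-u$ into a \emph{local} counting bound on $|L_2|,\dots,|L_5|$; making that conversion quantitative is exactly where the $(P_7,C_3)$ case has resisted a clean treatment so far. A secondary risk is that the true bound, though finite, exceeds $35$, in which case the decisive effort shifts from sharpening the structure theory to extending the computer search far enough to cover it.
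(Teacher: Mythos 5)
The statement you are asked to prove is stated in the paper as Conjecture~\ref{conj:N3P7C3}, and the paper itself offers \emph{no} proof of it: the authors explicitly say they did not succeed in solving the $(P_7,C_3)$-free case, that Algorithm~\ref{algo:init-algo} does not terminate there, and that the seven graphs are only known to be the complete list among graphs with at most $35$ vertices. So there is no ``paper proof'' to compare against, and the honest benchmark is whether your argument closes the gap the authors could not.

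It does not. Your proposal is a research plan rather than a proof, and its decisive step --- an explicit a priori bound $N$ on the order of a $4$-critical $(P_7,C_3)$-free graph, or alternatively a new expansion rule that makes the generation terminate --- is asserted as a goal but never established. The structural facts you list (triangle-freeness, $\delta\ge 3$, $2$-connectedness, no clique cutset, diameter at most $5$ since a shortest path is induced) are all correct, but as you yourself concede they do not bound $|V(G)|$: triangle-free graphs of diameter $5$ and minimum degree $3$ can be arbitrarily large, and the layer-counting argument that is supposed to convert ``$G-u$ is $3$-colourable for every $u$'' into a bound on $|L_2|,\dots,|L_5|$ is exactly the part that is missing and exactly where the problem has resisted solution. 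The second half of the plan (run the generator up to $N$ vertices, or invoke the existing search up to $35$) is sound \emph{conditional} on the first half, so nothing in the proposal constitutes progress beyond what the paper already reports. In short: the approach is reasonable and consistent with the paper's framework, but the statement remains a conjecture, and your write-up should present it as such rather than as a proof.
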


\begin{figure}[h!t]
\centering
\includegraphics[width=.20\textwidth]{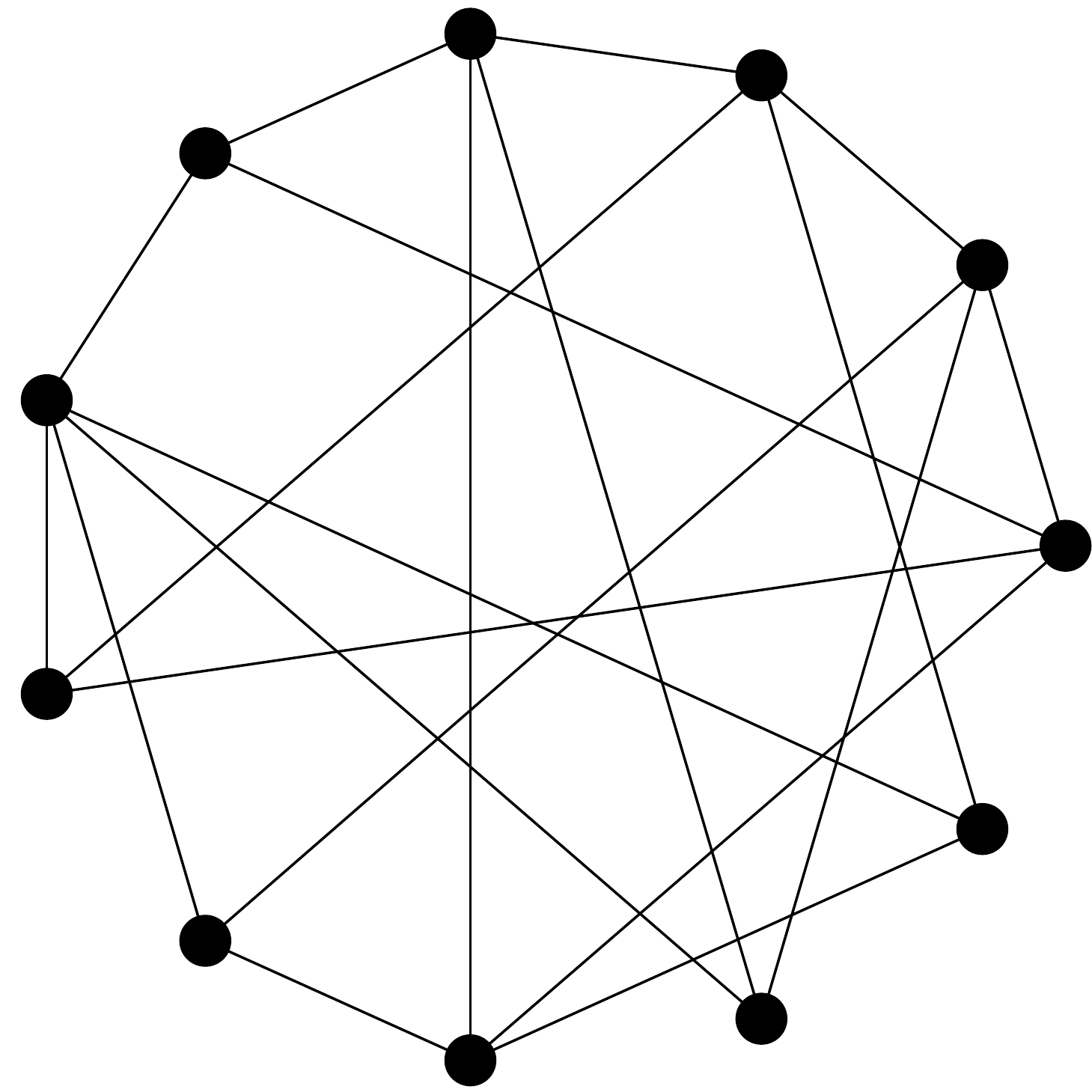}\ \ 
\includegraphics[width=.20\textwidth]{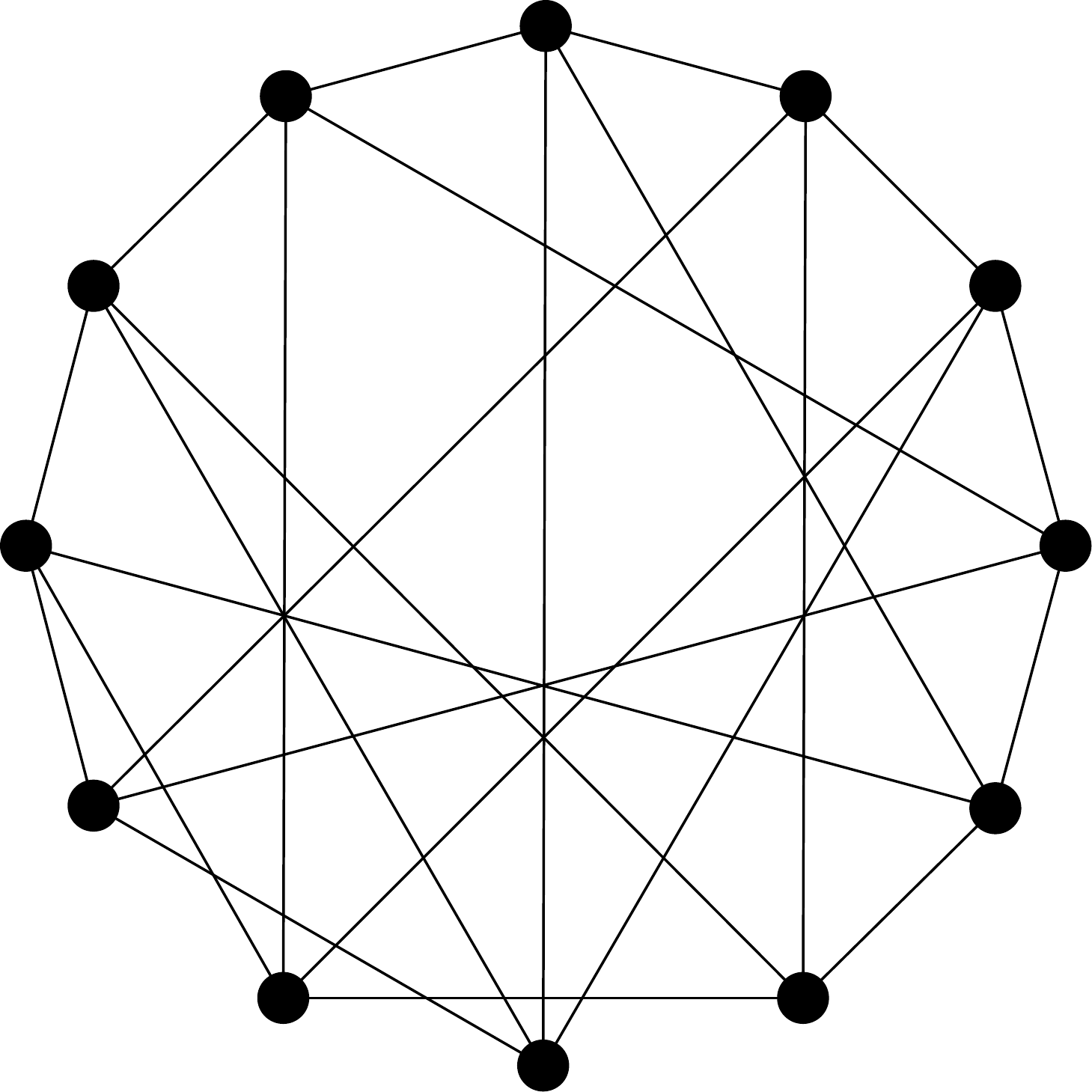}\ \ 
\includegraphics[width=.20\textwidth]{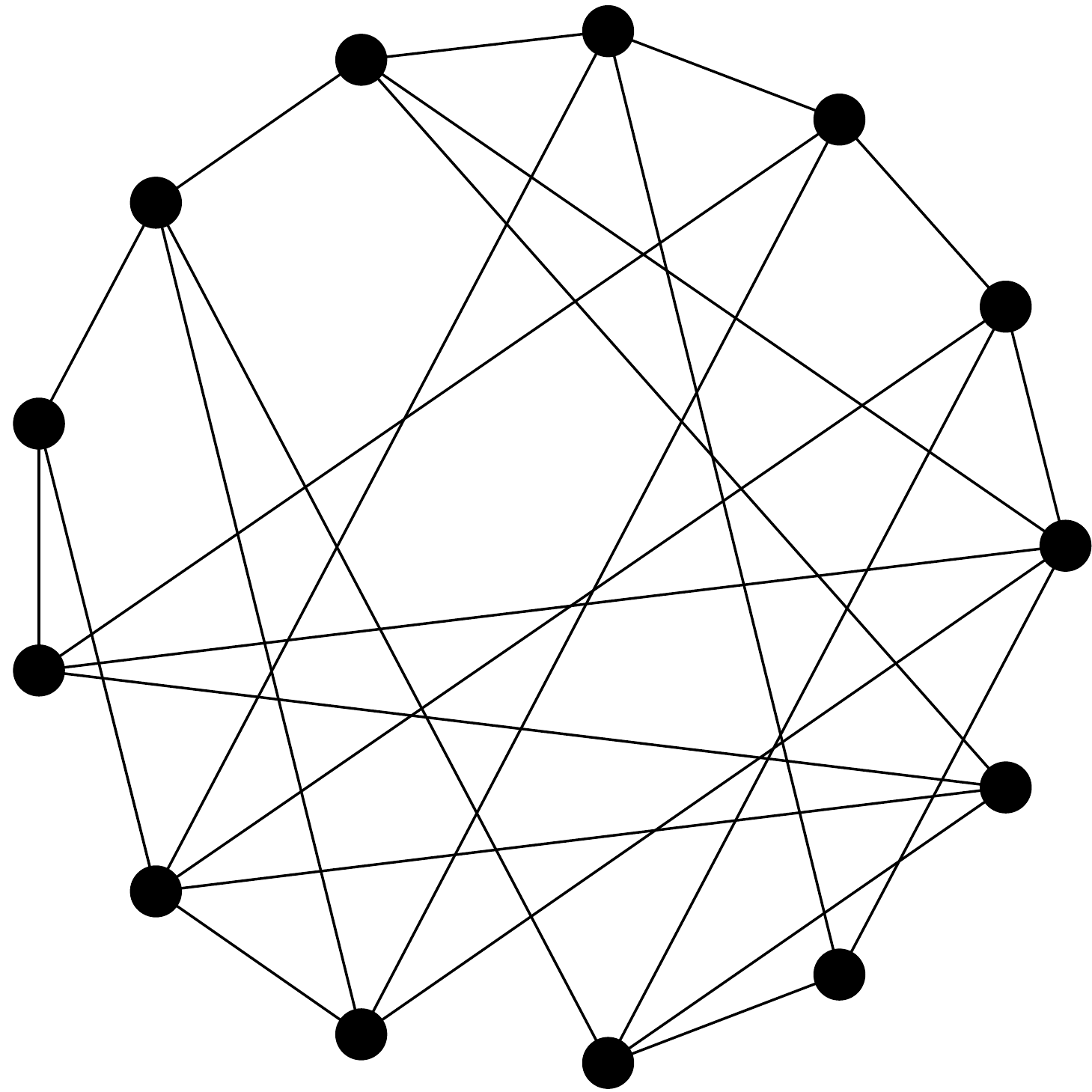}\ \ 
\includegraphics[width=.20\textwidth]{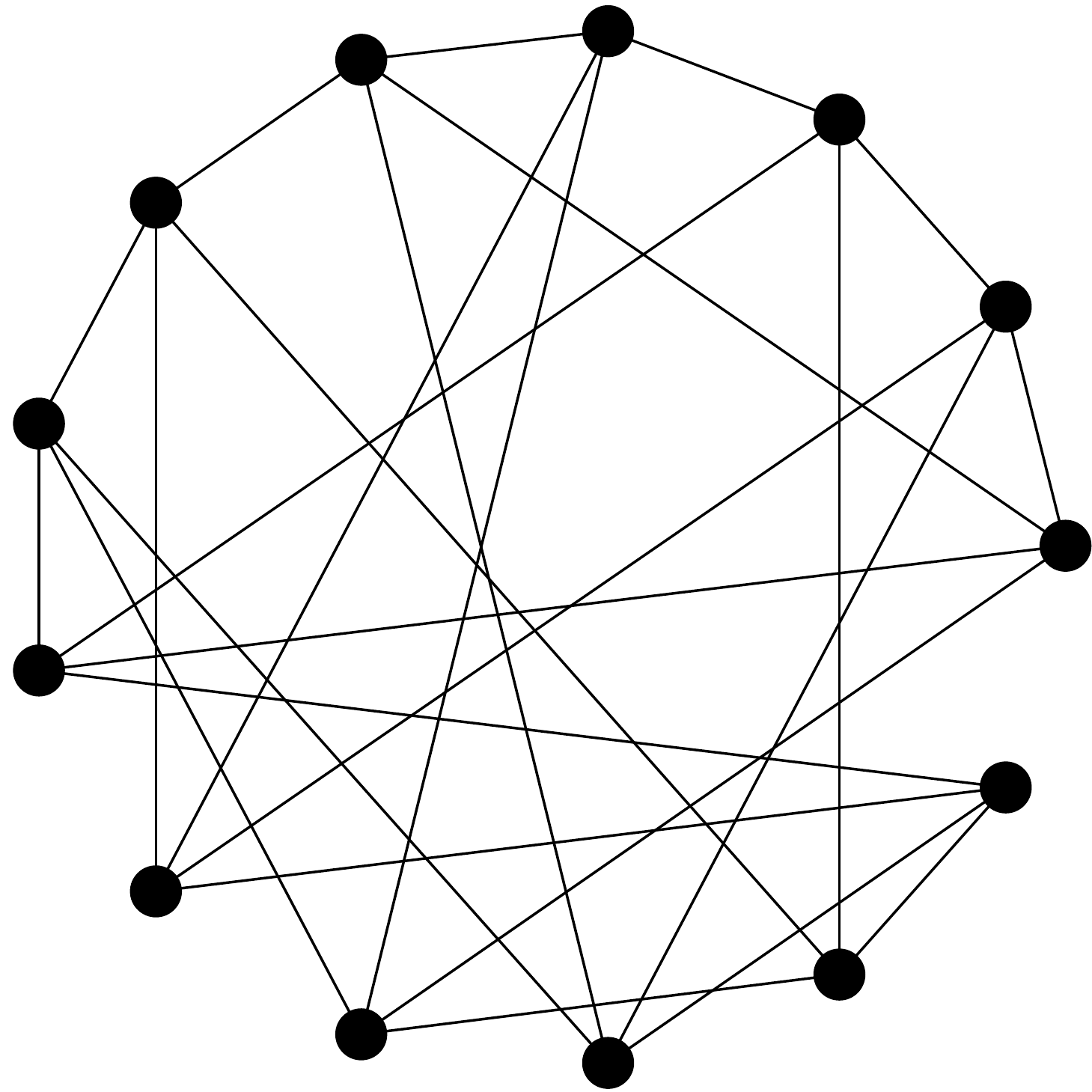}

\bigskip
 
\includegraphics[width=.20\textwidth]{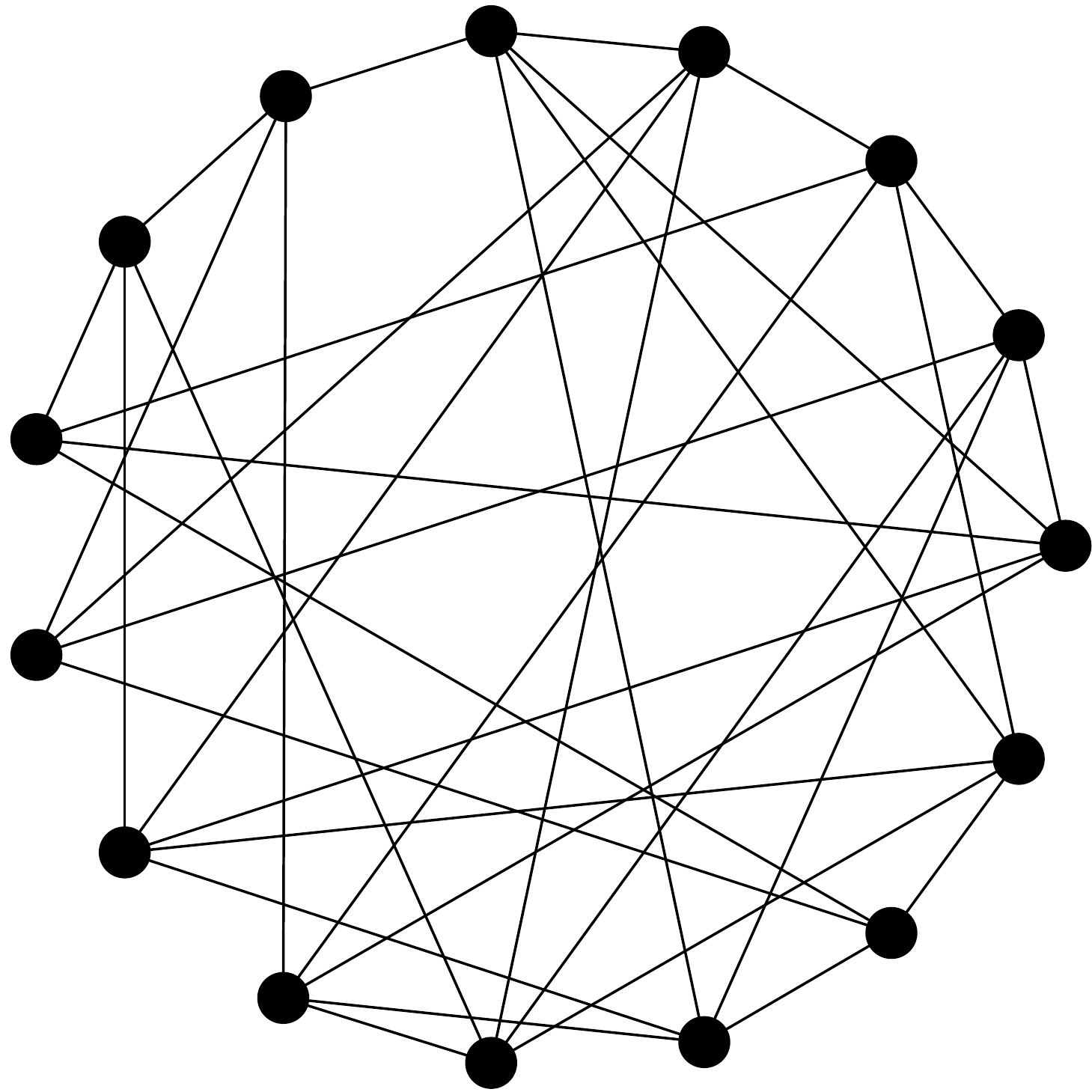}\ \ 
\includegraphics[width=.20\textwidth]{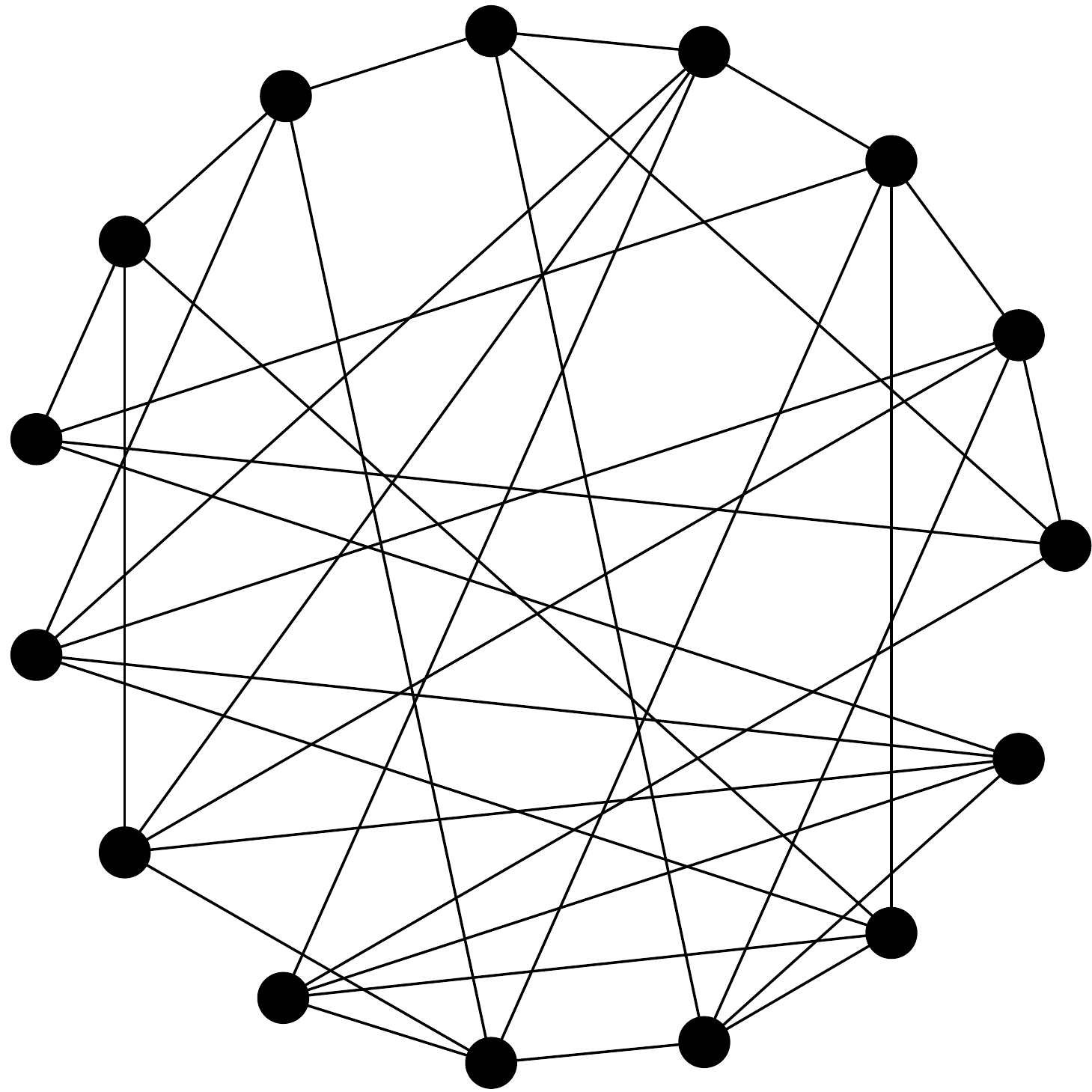}\ \ 
\includegraphics[width=.20\textwidth]{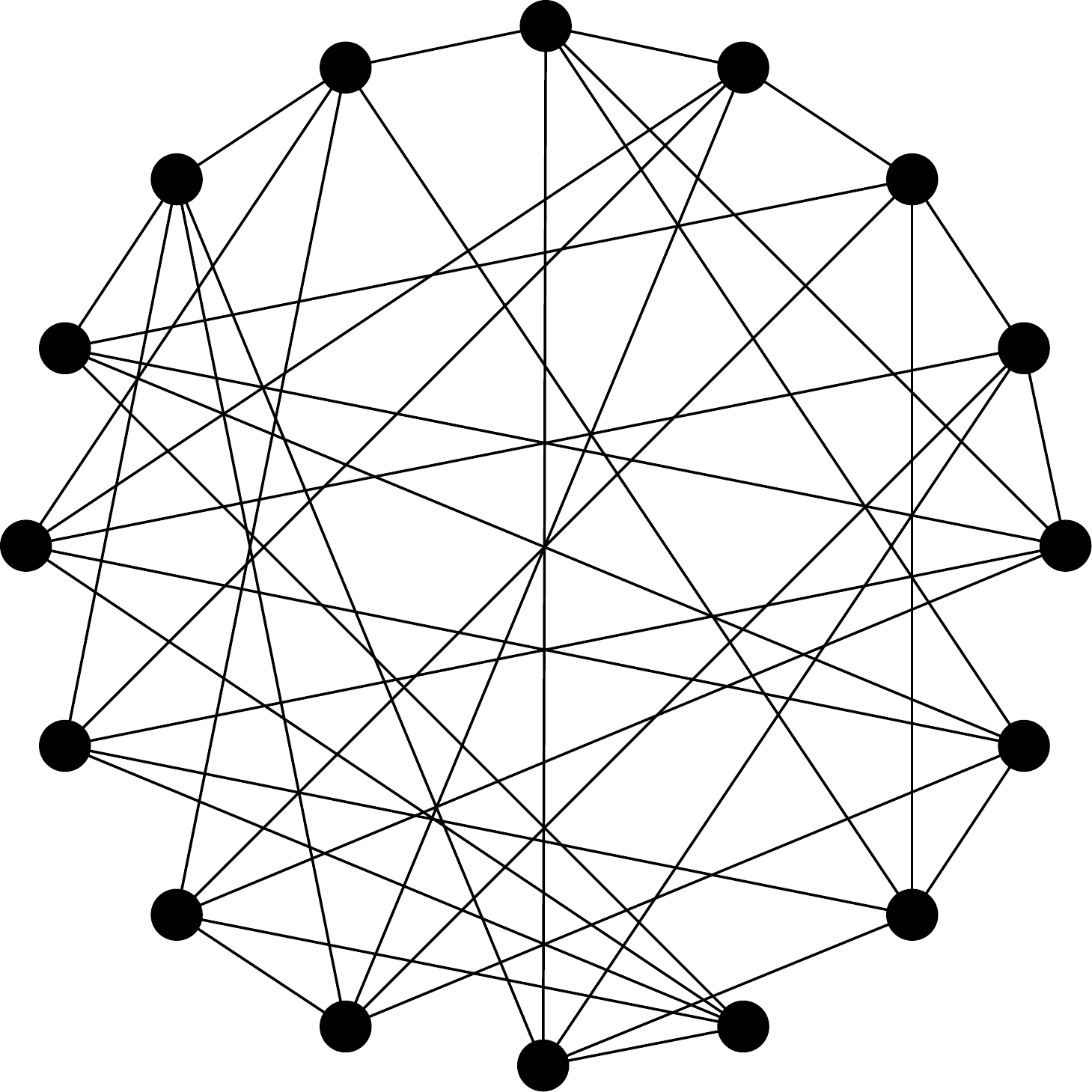}

\caption{All seven 4-critical $(P_7,C_3)$-free graphs with at most 35 vertices.}
\label{fig:animals_N3P7C3}

\end{figure}

%

\begin{table}[ht!]
\centering
\begin{tabular}{| l | c c c c c | c |}
\hline 
Vertices 								& 11		& 12		&	13		&	15		&	16	& total		\\ \hline
Critical graphs 				& 1			& 1			& 2			& 2			& 1		& 7				\\
\hline 
\end{tabular}
\caption{Counts of all 7 4-critical $(P_7,C_3)$-free graphs with at most 35 vertices. The number of 4-vertex-critical graphs is the same.}

\label{table:counts_animals_N3P7C3}
\end{table}

\subsection{4-critical graphs with a given minimal girth}
\label{sect:girth}

Golovach et al.~\cite{golovach2014coloring} mention the Brinkmann graph, constructed by Brinkmann and Meringer in~\cite{brinkmann1997smallest},
as an example of a $P_{10}$-free of girth five which is not 3-colorable. 
However, this claim is not entirely correct since the Brinkmann graph contains a $P_{12}$ as an induced subgraph. 
Using Algorithm~\ref{algo:init-algo} we can in fact show that every $P_{11}$-free graph with girth at least five is 3-colorable.
We also improve upon results of Golovach et al.~\cite{golovach2014coloring} in the cases of girth at least six, and at least seven. These results are also summarized in Table~\ref{table:bounds_girth}.

\begin{theorem}\label{thm:girth}
The following assertions hold.
\begin{enumerate}[(a)]
	\item Every $P_{11}$-free graph of girth at least five is 3-colorable. 
	\item There is a 4-chromatic $P_{12}$-free graph of girth five, and the smallest such graph has 21 vertices.
	\item Every $P_{14}$-free graph of girth at least six is 3-colorable.
	\item Every $P_{17}$-free graph of girth at least seven is 3-colorable.
\end{enumerate}
\end{theorem}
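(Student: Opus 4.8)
\textbf{Proof proposal for Theorem~\ref{thm:girth}.}

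The plan is to use Algorithm~\ref{algo:init-algo} directly, treating each item as a computational claim supported by Theorem~\ref{lem:diamond-algo-correct}. For each of the classes in question I would run the generation algorithm with the appropriate set $\mathcal H$ of forbidden induced subgraphs and verify that it terminates, outputting a finite (possibly empty) list $\mathcal F$ of $4$-critical graphs. Note that requiring girth at least $g$ is equivalent to forbidding the induced cycles $C_3,\ldots,C_{g-1}$, so ``girth at least five, $P_{11}$-free'' becomes $\mathcal H = \{P_{11}, C_3, C_4\}$; ``girth at least six, $P_{14}$-free'' becomes $\mathcal H = \{P_{14},C_3,C_4,C_5\}$; and ``girth at least seven, $P_{17}$-free'' becomes $\mathcal H = \{P_{17},C_3,C_4,C_5,C_6\}$. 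For part (a) I would show the algorithm terminates with \emph{empty} output: since every $4$-critical $\{P_{11},C_3,C_4\}$-free graph would appear in $\mathcal F$ (Theorem~\ref{lem:diamond-algo-correct}) and $K_4$ contains a $C_3$, an empty list means there is no such graph, hence every $\{P_{11},C_3,C_4\}$-free graph is $3$-colorable. The same argument, with the correspondingly larger path and cycle set, handles (c) and (d). For part (b), I would run the algorithm in the $\{P_{12},C_3,C_4\}$-free setting; here it produces a nonempty list, and I would exhibit the unique smallest member, verify by direct inspection that it has $21$ vertices, is $4$-chromatic, and has girth five, and check (again via the exhaustive output) that no $4$-critical such graph on fewer than $21$ vertices exists.

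First I would invoke the optimizations from Section~\ref{sect:optimisations}: in particular the ``limiting expansions'' optimization adapted to girth, namely that if attaching $v_n$ to a set $S$ already creates a cycle shorter than $g$, then no superset $S' \supseteq S$ need be considered. Combined with the heuristic for rapidly detecting induced $P_t$'s, this is what makes the larger path lengths ($t = 14$ and $t = 17$) feasible. I would also use Lemma~\ref{lem:startgraphs}: since $C_3$ (and, for girth six and seven, also $C_5$) is forbidden, the only starting graphs are the odd holes $C_{2s+1}$ with $2 \le s \le \lfloor (t-1)/2 \rfloor$ whose girth is at least $g$, and antiholes are excluded as they contain triangles for $s \ge 3$; this prunes the search at the root. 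For the weak-cycle expansion rule (Lemma~\ref{lem:crowns}, used since $k=4$) and the cutvertex rule (Lemma~\ref{lem:cutset}), nothing special is needed — they apply verbatim. I would then report the sizes of the search trees and the running times, as is done for the other theorems in Section~\ref{sect:testing_results}, and cross-check the output against the correctness tests described in the Appendix.

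The main obstacle will be \textbf{termination of the algorithm for the larger values of $t$}. The algorithm is only proven correct \emph{assuming} it terminates; there is no a priori bound on the size of the intermediate graphs, and for girth-five, $P_t$-free classes the finiteness of $4$-critical graphs is exactly what is in question for $t \ge 12$ (part (b) shows it fails for ordinary chromatic number but the critical-graph generation could still, in principle, blow up). So the real content is empirical: one must actually observe the recursion bottoming out. The girth constraint is what saves us — higher girth forces the ``weak cycle'' and minimum-degree expansion rules to bite hard, since low-degree induced cycles become unavoidable in sparse triangle-free (or $C_5$-free, etc.) graphs, and this is presumably why girth six tolerates $P_{14}$ and girth seven tolerates $P_{17}$ whereas girth five only reaches $P_{11}$. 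A secondary concern is performance: the induced-$P_{17}$ test is expensive, so I would rely on the stored-path heuristic and on ordering the property tests as $\mathcal H$-freeness first (Section~\ref{sect:optimisations}). Finally, for (b) I must make sure the ``limiting expansions'' optimization is switched off or handled correctly when also enumerating vertex-critical graphs, as flagged in Section~\ref{sect:optimisations}, though for the critical-graph count it applies safely.
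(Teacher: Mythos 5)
Your proposal takes essentially the same route as the paper: for (a), (c) and (d) the authors run Algorithm~\ref{algo:init-algo} with the girth condition encoded as forbidden short cycles, observe termination with an empty list of $4$-critical graphs, and conclude via Theorem~\ref{lem:diamond-algo-correct}. The one point to adjust is (b): the paper does not claim (and does not obtain) termination in the $P_{12}$/girth-five case — indeed finiteness of the $4$-critical graphs there is open — so instead it runs a modified version of the algorithm that discards every graph on more than $30$ vertices; this truncated run is still exhaustive below that threshold, which is all that is needed to certify that the unique $4$-critical example up to $30$ vertices has $21$ vertices and that nothing smaller exists.
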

\begin{proof}
Algorithm~\ref{algo:init-algo} terminates in the cases of (a), (c), and (d). 
It outputs an empty list of 4-critical graphs.
Thus, by Theorem~\ref{lem:diamond-algo-correct}, there are no 4-critical graphs in the respective graph classes, which means that all graphs therein are 3-colorable.

To prove (b), we modified Algorithm~\ref{algo:init-algo} such that it discards any graph with more than 30 vertices. Indeed no other 4-critical graphs than one graph on 21 vertices were found.
By Theorem~\ref{lem:diamond-algo-correct}, (b) follows.
\end{proof} 

We remark that the 4-critical $P_{12}$-free graph with girth five mentioned in Theorem~\ref{thm:girth}.(b) is the only such graph up to at least 30 vertices. 
This graph is shown in Figure~\ref{fig:N3P12g5} and its adjacency list can be found in Appendix~2. 
It can also be obtained from the \textit{House of Graphs}~\cite{hog} by searching for the keywords ``4-critical P12-free * girth 5''. 



Note that a graph with girth at least five cannot contain similar vertices $(u,v)$ which both have degree at least two. Experiments showed that when searching for critical $P_t$-free graphs with a given minimal girth, it is best only to try to apply the following expansion rules in Algorithm~\ref{algo:construct}: poor vertices (line~\ref{line:smallvertex}), weak cycles (line~\ref{line:weakcycle}) and cutvertices (line~\ref{line:clique}). This saves a significant amount of CPU time since then one does not have to search for similar \textit{elements} or compute \textit{$k$-hulls}.

\begin{table}[ht!]
\centering
\begin{tabular}{| c | l | l |}
\hline 
Girth & Old lower bound~\cite{golovach2014coloring} & New lower bound\\
\hline
4 & $P_5$-free (exact) & $P_5$-free (exact)\\
5 & $P_7$-free & $P_{11}$-free (exact)\\
6 & $P_{10}$-free & $P_{14}$-free\\
7 & $P_{12}$-free & $P_{17}$-free\\
\hline
\end{tabular}
\caption{Old and new lower bounds such that every $P_k$-free graph with girth at least $g$ is 3-colorable.}
\label{table:bounds_girth}
\end{table}

\begin{figure}[h!t]
\centering
\includegraphics[width=.4\textwidth]{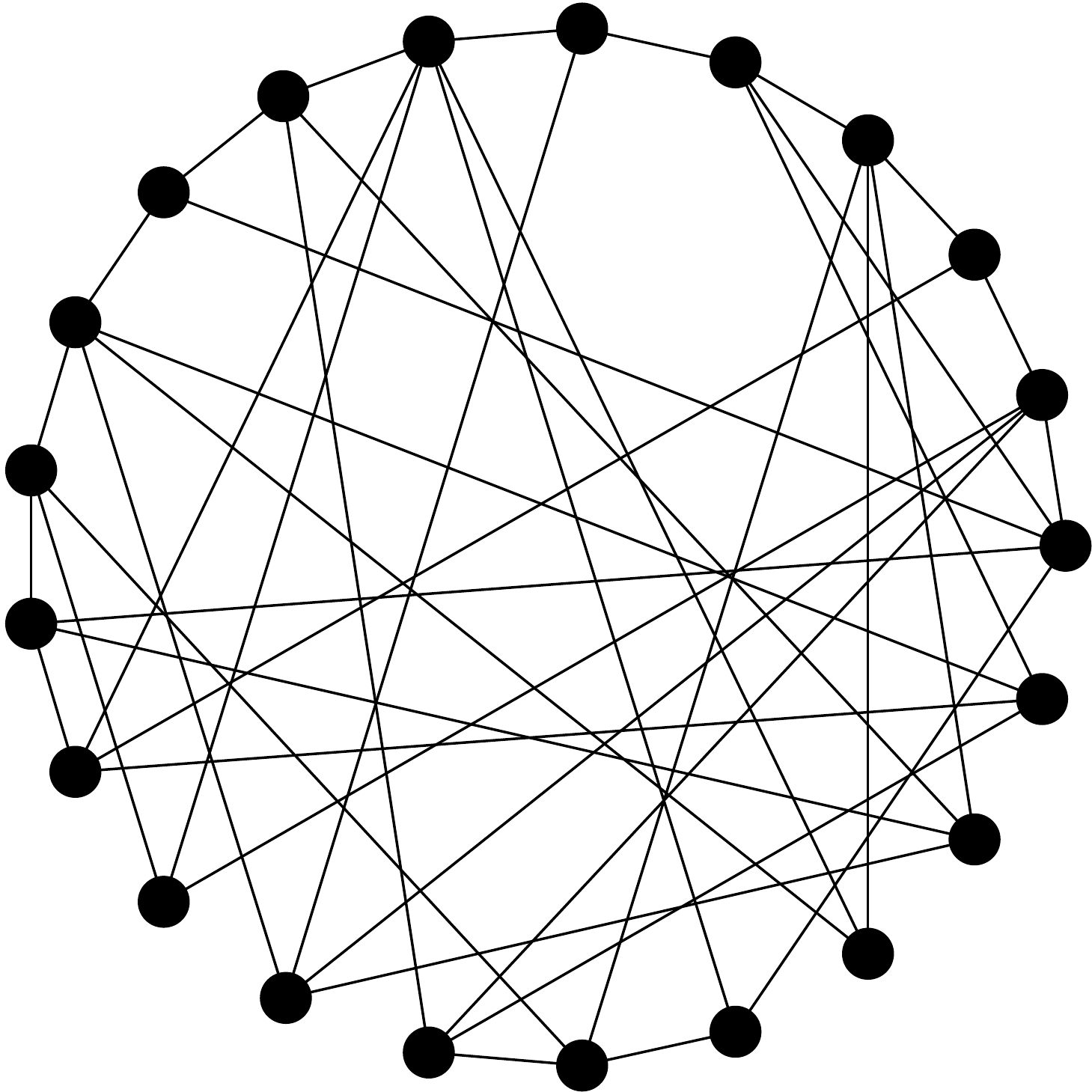}

\caption{The smallest 4-critical $P_{12}$-free graph with girth 5. It has 21 vertices and it is the only 4-critical $P_{12}$-free graph with girth at least five up to at least 30 vertices.}
\label{fig:N3P12g5}

\end{figure}

\subsection{4-critical planar graphs}

In this section we turn to planar critical graphs.
Due to the four-color theorem, we may restrict our attention to 4-critical graphs.
Let us first note that if an induced path is forbidden, there are only finitely many vertex-critical planar graphs.
This implies that there are only finitely many 4-critical planar $P_t$-free graphs, for all $t \in \mathbb N$.

\begin{theorem}\label{thm:planarfinite}
For any integer $t$ there are only finitely many vertex-critical graphs that are both planar and $P_t$-free.
\end{theorem}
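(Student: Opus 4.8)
The plan is to show that a vertex-critical planar $P_t$-free graph cannot be too large, by bounding both its \emph{diameter} and its \emph{maximum degree} in terms of $t$, and then invoking the fact that a planar graph of bounded degree and bounded diameter has bounded order. For the diameter bound: in a vertex-critical graph of chromatic number $k$, the graph is connected, and I claim that $P_t$-freeness forces small diameter. Indeed, if two vertices $u,v$ were at distance at least $t-1$, a shortest $u$--$v$ path would be an induced path on at least $t$ vertices, contradicting $P_t$-freeness; hence $\mathrm{diam}(G) \le t-2$. (Since we are free to restrict to $4$-critical planar graphs by the four-colour theorem, we even only need $k=4$, but the argument does not use this.)

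Next I would bound the maximum degree. Here is where planarity enters: if $v$ is a vertex of very large degree $d$, consider $G|N(v)$. This graph is planar and $(P_{t-1})$-free — because an induced $P_{t-1}$ inside $N(v)$ together with $v$ would not directly give a $P_t$, so instead one should argue that $G|N(v)$ has no long induced path at all, using that any induced path $Q$ in $N(v)$ yields, together with $v$, an induced structure; more carefully, an induced $P_{t-2}$ in $N(v)$ plus $v$ gives an induced $P_{t-1}$-ish configuration, and iterating/combining shortest-path arguments inside $N(v)$ bounds the number of ``independent directions''. The cleaner route: a $P_t$-free graph has no induced path on $t$ vertices, so in particular $G|N(v)$ is $P_{t-1}$-free (an induced $P_{t-1}$ in $N(v)$ is itself an induced $P_{t-1}$ in $G$, and appending $v$ is not needed — we just need that $P_{t-1}$-free planar graphs have bounded independence-like parameters). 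Since a large independent set in $N(v)$ would be a large induced subgraph of $G$, and since in a planar graph a vertex of degree $d$ has at most $2d-2$ edges inside its neighbourhood (outerplanarity of $G|N(v)$ when $G$ is planar), $N(v)$ contains an independent set of size at least roughly $d/3$; an independent set of size $m$ is an induced $\overline{K_m}$, which contains no long induced path only if $m$ is small — wait, $\overline{K_m}$ contains no $P_3$, so this gives nothing directly. The correct observation is that $G|N(v)$ being outerplanar and $P_{t-1}$-free is \emph{not} automatically of bounded order, so this must be combined with the diameter bound differently.

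The argument I would actually carry out: fix a spanning BFS tree $T$ of $G$ rooted at any vertex $r$; by the diameter bound it has depth at most $t-2$. It therefore suffices to bound the number of vertices at each BFS level, and in fact to bound the maximum degree $\Delta(G)$, since then $|V(G)| \le 1 + \Delta + \Delta^2 + \cdots + \Delta^{t-2}$. To bound $\Delta$: let $v$ have neighbourhood $A = N(v)$ with $|A| = \Delta$. The graph $G|A$ is planar, hence has an independent set $I \subseteq A$ with $|I| \ge |A|/4$ (four-colour theorem, or $|A|/6$ by a cheap degeneracy argument). Now $I$ is an independent set all of whose vertices are adjacent to $v$; by Lemma~\ref{lem:min_degree} (or plain vertex-criticality, since $G-v$ is $(k-1)$-colourable and $N(v)$ must see $k-1$ colours), $I$ meets at most $k-1 \le 3$ colour classes in a $3$-colouring of $G-v$, which is no contradiction yet. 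Instead, use $P_t$-freeness: I need that a large \emph{independent} set cannot sit inside one neighbourhood in a planar $P_t$-free graph. This is where the real work is — I would consider, for the vertices of $I$, their neighbourhoods outside $A\cup\{v\}$ and build a long induced path by a greedy ``escape'' argument, using planarity to find two vertices $x,y \in I$ whose other neighbourhoods are far apart in $G - (A \cup \{v\})$, then a shortest path between them (of bounded length by the diameter bound, but long in a relative sense) extended by $x, v, y$ — but a shortest path together with three extra vertices need not be induced.

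\textbf{The main obstacle}, as the above vacillation shows, is precisely the maximum-degree bound: controlling how large a single neighbourhood can be in a planar $P_t$-free graph. I expect the clean way is to observe that $G|N(v)$ is \emph{outerplanar} (a standard fact: in a plane embedding, the neighbours of $v$ lie on a closed curve around $v$, so $G|N(v)$ embeds in an annulus with all of $N(v)$ on one boundary, hence is outerplanar), that an outerplanar graph is $2$-degenerate and hence $3$-colourable, and that an outerplanar $P_{t-1}$-free graph on many vertices must contain a large induced matching or a large set of ``twins'' — and a large set of pairwise non-adjacent \emph{twins} $w_1, \dots, w_m$ inside $N(v)$, each with the same small neighbourhood, violates vertex-criticality directly (remove all but one; by $(k-1)$-colouring the rest and the folklore ``dominated vertex'' argument, or Lemma~\ref{lem:color-trick}, one gets a contradiction). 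Packaging this, I would prove: (i) $\mathrm{diam}(G) \le t-2$; (ii) for each $v$, $G|N(v)$ is outerplanar and $P_{t-1}$-free, and an outerplanar $P_s$-free graph has at most $f(s)$ vertices once we quotient by true twins, while vertex-criticality forbids many false/true twins, giving $\Delta(G) \le g(t)$; (iii) combine via the BFS-tree count to get $|V(G)| \le 1 + \sum_{i=1}^{t-2} g(t)^i$, a bound depending only on $t$. Step (ii) — showing an outerplanar graph with no long induced path is small modulo twins — is the part I would need to prove carefully, likely by induction on the ear/block structure of outerplanar graphs or by a Ramsey-type argument on the outer cycle; everything else is routine.
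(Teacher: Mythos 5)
Your diameter bound is correct and routine (a shortest path is induced, so a connected $P_t$-free graph has diameter at most $t-2$), and the observation that $G|N(v)$ is outerplanar is also fine. But the whole argument hinges on step (ii), the maximum-degree bound, and this is a genuine gap — not just an unpolished detail. The sublemma you would need, that an outerplanar $P_{t-1}$-free graph has bounded order once you quotient by twins, is false: the disjoint union $mK_2$ of $m$ edges is outerplanar, contains no induced $P_3$, and has $m$ twin-classes. So a vertex $v$ whose neighbourhood induces a huge matching (or a huge independent set whose members have pairwise distinct neighbourhoods outside $N[v]$) defeats both halves of your dichotomy ``large induced matching or large set of twins'': you only handle the twin case, and the matching/independent-set cases cannot be dismissed by Lemma~\ref{lem:color-trick} alone, since the relevant vertices have private neighbours elsewhere in the graph. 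Ruling these configurations out requires using the global structure of $G$, which is exactly the part of the proof you have not supplied; your own text concedes that the attempted ``escape path'' construction does not produce an induced path. As it stands, the proposal reduces the theorem to an unproved (and, in the form stated, false) claim.

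For comparison, the paper avoids any degree/diameter decomposition. It invokes the theorem of B\"ohme et al.\ (Theorem~\ref{thm:PtorK2s}): a $2$-connected graph on sufficiently many vertices contains an induced $P_t$ or a subdivision of $K_{2,2t}$. Since a vertex-critical graph has no cutvertex, a large planar $P_t$-free vertex-critical graph would contain a subdivision of $K_{2,2t}$ with branch vertices $u,v$; the $2t$ internally disjoint $u$--$v$ paths nest in the plane, so a shortest path in $G-\{u,v\}$ (or in the relevant component, when $\{u,v\}$ is a cutset) between a vertex of the innermost and a vertex of the $t$-th path must meet $t$ of them, and being a shortest path it is induced — contradicting $P_t$-freeness. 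If you want to salvage your outline, you would need to replace step (ii) by an argument of this planarity-forces-a-long-induced-path type; the twin reduction alone will not close it.
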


For the proof of Theorem~\ref{thm:planarfinite}, we need the following result of B\"ohme et al.~\cite{BMSS04}, which we state in a slightly different fashion than in the original paper.

\begin{theorem}[B\"ohme et al.~\cite{BMSS04}]\label{thm:PtorK2s}
For every $k,s,t \in  \mathbb N$ there is a number $n=n(k,s,t)$ such that every $k$-connected graph on at least $n$ vertices contains an induced path on $t$ vertices or a subdivision of $K_{2,s}$.
\end{theorem}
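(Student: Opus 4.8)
The plan is to prove the statement in the contrapositive, quantitative form: assuming $G$ is $k$-connected, contains no induced $P_t$, and has no subdivision of $K_{2,s}$, I would bound $|V(G)|$ by an explicit function $n(k,s,t)$. Throughout I take $k \ge 2$; for $k \le 1$ the statement as written fails (large stars are $1$-connected, $P_4$-free, and contain no $K_{2,2}$-subdivision), so the theorem is to be read for $k \ge 2$, which is also the only case needed in the applications.

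First I would exploit the elementary fact that a shortest path between two vertices is always chordless, hence induced. Thus if $G$ had two vertices at distance at least $t-1$, a shortest path between them would be an induced $P_t$. Since $G$ is $P_t$-free, the diameter of $G$ is at most $t-2$. Fixing a root $r$ and a BFS tree $T$, this means $T$ has depth at most $t-2$. Next I convert size into branching: if every vertex of $T$ had at most $\Delta-1$ children, then $|V(G)| \le (t-1)\,\Delta^{\,t-2}$, so once $|V(G)|$ exceeds this, some vertex $u$ has a set $C$ of at least $\Delta$ children, with $\Delta$ as large as we please by taking $n(k,s,t)$ large. All of $C$ lies in one BFS layer and is adjacent to $u$. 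I then run a Ramsey-type reduction on $G[C]$: if some $c \in C$ has at least $s$ neighbours inside $C$, then $u$ and $c$ have at least $s$ common neighbours, giving $s$ internally disjoint paths of length two from $u$ to $c$, i.e.\ a $K_{2,s}$-subdivision. Otherwise $G[C]$ has maximum degree less than $s$, so a greedy argument yields an independent set $I \subseteq C$ with $|I| \ge |C|/s$. Hence we may assume $u$ is adjacent to an arbitrarily large independent set $I$.

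The heart of the proof, and the step I expect to be the main obstacle, is to show that a large independent set $I \subseteq N(u)$ inside a $k$-connected, bounded-diameter, $P_t$-free graph forces a $K_{2,s}$-subdivision. The clean case is when the connectivity is large relative to $s$. Passing to $G-u$ (which is $(k-1)$-connected and still has diameter at most $t-2$, since otherwise a shortest path in $G-u$ is an induced $P_t$), the fan lemma produces $k-1$ paths from a fixed $c' \in I$ to distinct vertices $c_i \in I$, pairwise disjoint except at $c'$ and avoiding $u$. Since $I$ is independent each such path has length at least two; prefixing it with the edge $uc_i$ gives $k-1$ internally disjoint paths from $u$ to $c'$, each of length at least three, i.e.\ a $K_{2,k-1}$-subdivision with branch vertices $u$ and $c'$. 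This settles the case $k-1 \ge s$ completely.

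The genuine difficulty is the regime of small connectivity and large $s$, where the fan lemma alone is too weak to reach $s$ disjoint paths. Here I would argue by induction on $s$, working inside the bounded-diameter graph $G-u$: the many short paths joining pairs of the huge set $I$ must either be arrangeable into $s$ internally disjoint paths between a single pair of vertices — boosting the disjoint-path count beyond what the raw connectivity provides, by rerouting through $I$ and the bounded-diameter structure — or else their forced overlaps expose a long chordless path, contradicting $P_t$-freeness. Quantifying this dichotomy, and thereby extracting the missing disjoint paths while controlling their lengths and intersections, is the technical crux. Once it is in place, the final value of $n(k,s,t)$ emerges by unwinding the iterated pigeonhole over the at most $t-2$ BFS layers together with the bounded-degree Ramsey threshold used to produce $I$.
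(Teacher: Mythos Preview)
This theorem is not proved in the paper: it is quoted from B\"ohme et al.\ and used as a black box in the proof of Theorem~\ref{thm:planarfinite}, so there is no ``paper's own proof'' to compare your attempt against.

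On its own merits your outline is sound up to the point you yourself flag. The remark that $k\ge 2$ is necessary (large stars), the diameter bound, the BFS pigeonhole producing a vertex $u$ with many children, the Ramsey step extracting a large independent set $I\subseteq N(u)$, and the fan-lemma argument for $k\ge s+1$ are all correct. The gap is the small-$k$ regime --- and $k=2$ is the only instance the paper actually invokes (it takes $n(2,2t,t)$), so the case you do settle is not the one that matters here. Your sketch for small $k$ (``induction on $s$'', overlapping short paths either rearranging into $s$ disjoint ones or exposing a long induced path) is too vague to assess, and I do not see how to make that dichotomy precise. A direct completion within your own setup goes as follows: in $H=G-u$ (connected and still $P_t$-free, hence of diameter at most $t-2$, as you correctly note) take a BFS tree and let $T''$ be its inclusion-minimal subtree containing $I$. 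By minimality every component of $T''-v$ meets $I$ for each $v\in V(T'')$; since $T''$ has tree-diameter at most $2(t-2)$, a tree of that diameter and maximum degree at most $s-1$ has bounded order, so once $|I|$ is large enough some $v$ has degree at least $s$ in $T''$. The $s$ tree-paths from $v$ into distinct components of $T''-v$ end at distinct vertices of $I$ and are disjoint except at $v$; prefixing each by the edge from $u$ to its endpoint in $I$ gives $s$ internally disjoint $u$--$v$ paths of length at least two, i.e.\ a $K_{2,s}$-subdivision. This handles all $k\ge 2$ at once and makes the fan-lemma detour unnecessary.
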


From this result our theorem follows readily.

\begin{proof}[Proof of Theorem~\ref{thm:planarfinite}]
Let $n=n(2,2t,t)$ be the number promised by Theorem~\ref{thm:PtorK2s}.
Suppose for a contradiction that there is a planar $P_t$-free vertex-critical graph $G=(V,E)$ on at least $n$ vertices.
Clearly $G$ is connected.
We consider $G$ to be embedded into the plane.

By Theorem~\ref{thm:PtorK2s} and since $G$ is $P_t$-free, $G$ contains a subdivision $H$ of $K_{2,2t}$ as a subgraph.
We consider $H$ as an embedded subgraph of $G$.
Let $u$ and $v$ be the two vertices of degree $2t$ in $H$, and let $P^1, \ldots , P^{2t}$ be the mutually vertex disjoint paths from $u$ to $v$ in $H$.
W.l.o.g.~$P^i \cup P^{i+1}$ is the border of an inner face of $H$, for each $i=1,\ldots,2t-1$, and $P^1 \cup P^{2t}$ is the border of the outer face.

First we assume that $G-\{u,v\}$ is connected.
We pick two vertices $x \in V(P^1) \setminus \{u,v\}$ and $y \in V(P^t) \setminus \{u,v\}$.
Let $Q$ be a shortest path from $x$ to $y$ in $G-\{u,v\}$.
Due to the planarity of $G$, necessarily $Q$ contains a vertex of each $P^i$, $i = 1, \ldots, t$, or of each $P^j$, $j = 1,t,t+1, \ldots, 2t$.
So, $Q$ is an induced path on at least $t$ vertices, in contradiction to the fact that $G$ is $P_t$-free.

So we know that $\{u,v\}$ is a cutset of $G$.
Thus $G-\{u,v\}$ has exactly two components, say $G_1$ and $G_2$, a folklore fact about vertex-critical graphs.
We may assume that the paths $P^1-\{u,v\},\ldots,P^{t}-\{u,v\}$ are contained in $G_1$.
We again pick vertices $x \in V(P^1) \setminus \{u,v\}$ and $y \in V(P^t) \setminus \{u,v\}$.
Let $Q$ be a shortest path from $x$ to $y$ in $G_1$.
Since $G_2$ is not empty and $G$ is planar, $Q$ must contain a vertex of each $P^i$, $i = 1, \ldots, t$.
But this is again a contradiction to the fact that $G$ is $P_t$-free. 
\end{proof}

We modified Algorithm~\ref{algo:construct} so it only searches for \textit{planar} $k$-critical $\mathcal{H}$-free graphs by adding a test for planarity on line~\ref{line:isocheck}. We used the algorithm of Boyer and Myrvold~\cite{boyer2004cutting} to test if a graph is planar. Using this modified version of Algorithm~\ref{algo:construct} we obtained the following result.

\begin{theorem}\label{thm:planarresults}
There are exactly 27 4-critical $(P_7,C_6)$-free planar graphs.
\end{theorem}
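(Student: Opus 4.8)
The plan is to combine the correctness of the enumeration algorithm (Theorem~\ref{lem:diamond-algo-correct}) with the finiteness guarantee coming from Theorem~\ref{thm:planarfinite}, exactly as in the proofs of Theorems~\ref{thm:N3P7Ck} and~\ref{thm:girth}. First I would invoke the variant of Algorithm~\ref{algo:construct} that has been modified to search only for \emph{planar} $4$-critical $\mathcal H$-free graphs, here with $\mathcal H=\{P_7,C_6\}$: the extra planarity test on line~\ref{line:isocheck} prunes every non-planar intermediate graph, and the limiting-expansions optimization from Section~\ref{sect:optimisations} may additionally be used to skip any expansion that already destroys planarity. Since we are enumerating critical (not merely vertex-critical) graphs, the $k$-colorability pruning and the planarity pruning of supersets are both sound.

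The crux is that this modified algorithm \emph{terminates} on the $(P_7,C_6)$-free planar input. I would argue termination abstractly rather than by tracing the search tree: Theorem~\ref{thm:planarfinite}, applied with $t=7$, tells us there are only finitely many vertex-critical graphs that are simultaneously planar and $P_7$-free, hence in particular only finitely many planar $(P_7,C_6)$-free $4$-critical graphs, and moreover these have bounded order, say at most $N$ vertices. Every graph the algorithm ever constructs is $P_7$-free, planar, and $(k-1)$-colorable (the $k$-critical ones are placed in $\mathcal F$ and not expanded further), so one still has to rule out an infinite ascending chain of such intermediate graphs. The key observation is that whenever the algorithm expands a graph $G$ it does so only because $G$ exhibits one of the explicit deficiencies — similar vertices/edges/triangles/diamonds, a poor vertex, a weak cycle, or a cutvertex — and, by Lemmas~\ref{lem:color-trick}, \ref{lem:min_degree}, \ref{lem:cutset}, and~\ref{lem:crowns}, such a $G$ cannot itself be an induced subgraph of a $4$-critical graph witnessed by a one-vertex extension that avoids the deficiency in the appropriate sense; a $4$-critical planar $P_7$-free graph has none of these deficiencies (minimum degree $\ge 3$, $2$-connected, no weak cycle), so it is never expanded. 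Combined with Theorem~\ref{thm:planarfinite} this caps the number and size of all \emph{terminal} graphs in the search; since every non-terminal graph is on a path toward some terminal graph of bounded size, and each expansion adds exactly one vertex, the search tree is finite.

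With termination in hand, I would simply report the output of the run: the algorithm outputs exactly $27$ graphs, all of which are $4$-critical $(P_7,C_6)$-free planar by lines~\ref{line:isocheck}–\ref{line:k-critical} of Algorithm~\ref{algo:construct}, and by Theorem~\ref{lem:diamond-algo-correct} this list is complete. The adjacency lists of these $27$ graphs are recorded in Appendix~2 and can be independently verified to be $4$-chromatic, planar, $P_7$-free, $C_6$-free, and vertex-minimal, which also serves as a machine-checkable certificate for the statement.

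The main obstacle, as in the other computational theorems of this paper, is not the mathematics but establishing \emph{a priori} that the concrete run terminates in practice within feasible time and memory; the abstract finiteness argument above guarantees termination in principle, but the effective bound $N$ from Theorem~\ref{thm:PtorK2s} is astronomically large, so in practice one relies on the algorithm self-terminating (which it does, as reported) rather than on that bound. A secondary, purely bookkeeping point is to be careful that the planarity-based pruning of supersets of a non-colorable or non-planar expansion is only valid when enumerating critical graphs and not vertex-critical ones, but that is exactly the setting here.
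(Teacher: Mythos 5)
Your overall strategy matches the paper's: run the variant of Algorithm~\ref{algo:construct} with a planarity test added on line~\ref{line:isocheck} (the paper uses the Boyer--Myrvold test), observe that it terminates with $27$ graphs in its output, and conclude completeness from Theorem~\ref{lem:diamond-algo-correct}. That part is fine, and your remark that the superset-pruning optimization is only sound for critical (not vertex-critical) graphs is also correct and appears in Section~\ref{sect:optimisations}.

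However, the piece you call ``the crux'' --- the abstract termination argument --- does not work, and the paper makes no such claim. Two of its steps fail. First, it is not true that the algorithm expands a graph only when it exhibits one of the listed deficiencies: Algorithm~\ref{algo:construct} has a final catch-all branch that attaches a new vertex in all possible ways whenever \emph{no} deficiency is present, so every $\mathcal H$-free, $(k-1)$-colorable, previously unseen graph is expanded. Second, and more fundamentally, Claim~\ref{clm:invariant} is a completeness invariant --- it guarantees that \emph{some} chain of induced subgraphs leading to each critical graph is generated --- but it does not say that every generated intermediate graph lies on such a chain. The search produces many $3$-colorable, planar, $(P_7,C_6)$-free graphs that are induced subgraphs of no $4$-critical graph, and there are infinitely many non-isomorphic such graphs, so bounding the order of the critical (or vertex-critical) graphs via Theorem~\ref{thm:planarfinite} places no bound on the depth of the search tree. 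Consequently ``every non-terminal graph is on a path toward some terminal graph of bounded size'' is false, and termination cannot be deduced a priori from Theorem~\ref{thm:planarfinite}; Theorem~\ref{lem:diamond-algo-correct} is deliberately conditional on termination, and the paper's proof of Theorem~\ref{thm:planarresults} rests on the empirical fact that the implemented run halts, not on an abstract finiteness argument. (Theorem~\ref{thm:planarfinite} does tell you in advance that the \emph{answer} is a finite list, but not that this particular algorithm will find it in finite time.)
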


Note that, as mentioned earlier, if the planarity condition is dropped there are infinitely many 4-critical $(P_7,C_6)$-free graphs. The counts of the 4-critical and 4-vertex-critical planar $(P_7,C_6)$-free graphs can be found in Table~\ref{table:counts_animals_N3P7C6_planar}. The graphs themselves can be downloaded from \url{http://hog.grinvin.org/Critical}

%

\begin{table}[ht!]
\centering
\begin{tabular}{| l | c c c c c c c c | c |}
\hline 
Vertices 								& 4		& 6		&	7		&	8		&	9			&	10	& 11 	&	12	& total		\\ \hline
Critical graphs 				& 1		& 1		& 2		& 2		& 10		& 9		& 1		& 1		&	27	\\
Vertex-critical graphs 	& 1		& 1		& 6		& 2		& 52		& 118	& 2		& 3		&	185	\\
\hline 
\end{tabular}
\caption{Counts of all 4-critical and 4-vertex-critical planar $(P_7,C_6)$-free graphs.}

\label{table:counts_animals_N3P7C6_planar}
\end{table}

It follows from Gr\"otzsch's Theorem~\cite{grotzsch1958theorie} that there are no planar 4-critical $(P_7,C_3)$-free graphs.
The planar 4-critical $(P_7,C_4)$-free and $(P_7,C_5)$-free graphs can easily be obtained by testing which of the critical graphs from Theorem~\ref{thm:N3P7Ck} are planar. 

We did not succeed in solving the general planar $P_7$-free case, in the sense that we could not determine the exact list of 4-critical graphs in this class. However, we were able to determine all planar 4-critical $P_7$-free graphs with at most 30 vertices. Their counts are shown in Table~\ref{table:counts_animals_N3P7planar}. These graphs can also be obtained from the \textit{House of Graphs}~\cite{hog} by searching for the keywords ``planar 4-critical P7-free''.  Since the largest planar 4-critical $P_7$-free graph we found up to 30 vertices has 13 vertices, we conjecture the following.

\begin{conjecture}\label{conj:N3P7planar}
The 52 graphs from Table~\ref{table:counts_animals_N3P7planar} are the only planar 4-critical $P_7$-free graphs.
\end{conjecture}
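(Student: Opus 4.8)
\emph{Proof proposal.}
By Theorem~\ref{thm:planarfinite}, applied with $t=7$ and using that every $4$-critical graph is $4$-vertex-critical, there are only finitely many planar $4$-critical $P_7$-free graphs, and the paper exhibits the $52$ of them with at most $30$ vertices. So the conjecture reduces to the finite assertion that no planar $4$-critical $P_7$-free graph has more than $30$ vertices. The plan is: (i) extract from the proof of Theorem~\ref{thm:planarfinite} an \emph{explicit} upper bound $N$ on the number of vertices of such a graph; (ii) run the planarity-modified version of Algorithm~\ref{algo:init-algo} restricted to graphs on at most $N$ vertices — a finite search which, by Theorem~\ref{lem:diamond-algo-correct}, is complete and terminates — and (iii) check that its output coincides with the list underlying Table~\ref{table:counts_animals_N3P7planar}.

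Step~(i) is the substantive part and amounts to making the argument of Theorem~\ref{thm:planarfinite} quantitative. First, a connected $P_7$-free graph has diameter at most $5$: a shortest path between two vertices at distance $d$ has $d+1$ vertices and is induced, forcing $d\le 5$. Second, re-reading the proof of Theorem~\ref{thm:planarfinite}, the only large object used is a subdivision of $K_{2,2t}=K_{2,14}$, and the remainder of that proof in fact shows that a planar $P_7$-free vertex-critical graph contains \emph{no} $K_{2,14}$-subdivision whatsoever: if $u,v$ were the two branch vertices of such a subdivision, then either $G-\{u,v\}$ is connected, or $\{u,v\}$ is a $2$-cut of a vertex-critical graph and hence leaves exactly two components, and in both cases a shortest path across seven of the nested internally disjoint $u$–$v$ paths is an induced $P_7$. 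So it suffices to bound the size of a planar $P_7$-free graph of diameter at most $5$, minimum degree $3$, and with no $K_{2,14}$-subdivision. One option is to invoke the explicit form of the constant $n(2,14,7)$ from B\"ohme et al.~\cite{BMSS04}; if that value is larger than $30$, a planar-specific estimate is needed, for which I would root a BFS at a vertex of minimum degree (at most $5$ by Euler's formula) and bound each of the at most five layers in terms of the preceding one, using that a layer interface carrying too large a ``book'' produces a $K_{2,14}$-subdivision, and that no single vertex can carry many internally disjoint paths without either forcing high degree — which one caps by analysing the cyclic link of a high-degree vertex together with $P_7$-freeness and the absence of a separating clique (Lemma~\ref{lem:cutset}) — or producing a forbidden subdivision.

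The hard part will be step~(i), and more precisely pushing $N$ down to a value for which step~(ii) is computationally feasible: the bound coming straight out of Theorem~\ref{thm:PtorK2s} (and even a crude planar degree–diameter estimate) is almost certainly far beyond the reach of exhaustive generation. I therefore expect that a proof will require either a genuinely sharp structural analysis of planar $4$-critical $P_7$-free graphs — exploiting diameter $\le 5$, the domination structure forced by $P_7$-freeness, the absence of separating triangles, and Gr\"otzsch-type discharging constraints on faces — or the design of new planarity-exploiting expansion rules for Algorithm~\ref{algo:construct} (for instance via Gallai's decomposition of a critical graph along a $2$-cut into smaller critical planar pieces) that make the unbounded search provably terminate on its own, as in the other cases resolved in this paper. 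Once a usable bound (or a terminating enriched algorithm) is in hand, steps~(ii) and~(iii) are routine bookkeeping against Table~\ref{table:counts_animals_N3P7planar}.
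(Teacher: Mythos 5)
The statement you are addressing is stated in the paper as Conjecture~\ref{conj:N3P7planar}, and the paper offers no proof of it: the authors explicitly write that they did not succeed in solving the general planar $P_7$-free case, and only report an exhaustive check that no further examples exist on at most $30$ vertices. So there is no paper proof to compare against, and the real question is whether your proposal closes the gap. It does not. Your steps~(ii) and~(iii) are indeed routine \emph{once} an explicit bound $N$ within computational reach is available, but step~(i) --- the only substantive step --- is never carried out. You correctly observe that the argument of Theorem~\ref{thm:planarfinite} in fact rules out any $K_{2,14}$-subdivision in a planar $4$-vertex-critical $P_7$-free graph (not merely a large one), and that such a graph has diameter at most $5$ and minimum degree at least $3$; but you then only list candidate strategies for converting these facts into a numerical bound. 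The explicit constant $n(2,14,7)$ from B\"ohme et al.~\cite{BMSS04} is, as you yourself concede, far beyond the reach of the generation algorithm, and your proposed BFS-layer / ``book'' / cyclic-link analysis is not developed to the point where any concrete bound, let alone one near $30$, is obtained. In particular you give no argument bounding the maximum degree of such a graph, and bounded diameter plus minimum degree $3$ alone do not bound the order of a planar graph, so everything hinges on the unexecuted $K_{2,14}$-exclusion analysis.

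In short, your proposal is a reasonable research programme --- and its first reduction (finiteness via Theorem~\ref{thm:planarfinite}, diameter at most $5$, no $K_{2,14}$-subdivision) is sound and goes slightly beyond what the paper records --- but it is not a proof, and the statement remains a conjecture both in the paper and after your attempt. The missing idea is precisely the one the authors were unable to supply: either a structural theorem sharp enough to yield a small explicit vertex bound, or a new planarity-exploiting expansion rule making the planar $P_7$-free run of Algorithm~\ref{algo:init-algo} terminate on its own.
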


%

\begin{table}[ht!]
\centering
\begin{tabular}{| l | c c c c c c c c c | c |}
\hline 
Vertices 								& 4		& 6		&	7		&	8		&	9			&	10	& 11 	&	12	& 13	& total		\\ \hline
Critical graphs 				& 1		& 1		& 2		& 2		& 14		& 19	& 4		& 6		&	3		&	52	\\
Vertex-critical graphs 	& 1		& 1		& 6		& 2		& 65		& 347	& 6		& 19	&	15	&	462	\\
\hline 
\end{tabular}
\caption{Counts of all planar 4-critical $P_7$-free graphs with at most 30 vertices.}

\label{table:counts_animals_N3P7planar}
\end{table}


\subsection*{Acknowledgements}

Several of the computations for this work were carried out using the Stevin Supercomputer Infrastructure at Ghent University.
Jan Goedgebeur is supported by a Postdoctoral Fellowship of the Research Foundation Flanders (FWO).


\bibliographystyle{amsplain}
\bibliography{bnm-j,bnm,references}


\section*{Appendix 1: Correctness testing}

The correctness of our algorithm is proven in Theorem~\ref{lem:diamond-algo-correct}, but it is also very important to thoroughly verify the correctness of our implementation to minimize the chance of programming errors. Therefore this section is dedicated to the description of the extensive correctness tests which we performed on our implementation.

Since all of our consistency tests passed, we believe that this is strong evidence for the correctness of our implementation.

More specifically, we performed the following consistency tests to verify the correctness of our generator for $k$-critical $\mathcal H$-free graphs (i.e.\ Algorithm~\ref{algo:init-algo}). The source code of this program can be downloaded from~\cite{criticalpfree-site}.

\begin{itemize}

\item As already mentioned in Section~\ref{sect:previous_results}, we verified that our program indeed yields the same results when we apply it to the following cases which were already known before in the literature (by hand or by computer):

\begin{itemize}
\item There are six 4-critical $P_5$-free graphs~\cite{BHS09}.
\item There are eight 5-critical $(P_5,C_5)$-free graphs~\cite{HMRSV15}.
\item The Gr\"otzsch graph is the only 4-critical $(P_6,C_3)$-free graph~\cite{randerath_04}.
\item There are four 4-critical $(P_6,C_4)$-free graphs~\cite{hell_14}.
\end{itemize}

\item We developed a completely independent generator for $k$-critical $P_t$-free graphs. Here we started from a generator for all graphs (i.e.\ the program \verb|geng|~\cite{nauty-website,mckay_14}) and added routines to it to prune graphs which are not $(k-1)$-colorable or which are not $P_t$-free. The number of graphs generated by this program keeps growing, so it cannot terminate. But it still allowed us to independently generate all $k$-critical $P_t$-free graphs up to a given number of vertices:

\begin{itemize}

\item We executed this program to generate all 4-critical $P_6$-free graphs up to 16 vertices and it indeed yielded the same 24 graphs from~\cite{CGSZ15}.

\item We executed this program to generate all 4-critical $(P_7,C_3)$-free graphs up to 16 vertices and it indeed yielded the same 7 critical graphs from Conjecture~\ref{conj:N3P7C3}.

\item We executed this program to generate all 4-critical and 4-vertex-critical $P_7$-free graphs up to 13 vertices and it indeed yielded the same graphs from~\cite{CGSZ15}.

\item We executed this program to generate all 5-critical $P_5$-free graphs up to 15 vertices and in both cases it yielded the same 90 critical graphs.

\end{itemize}

\item We modified our program to generate all $P_t$-free graphs and compared it with the known counts of $P_t$-free graphs for $t=4,5$ on the On-Line Encyclopedia of Integer Sequences~\cite{OEIS} (i.e.\ sequences A000669 and A078564).

\item We modified our program to generate all $C_u$-free graphs and compared it with the known counts of $C_u$-free graphs for $u=4,5$ on the On-Line Encyclopedia of Integer Sequences~\cite{OEIS} (i.e.\ sequences A079566 and A078566).

\item We modified our program to generate all $k$-colorable graphs and compared it with the known counts of  $k$-colorable graphs for $k=3,4$ on the On-Line Encyclopedia of Integer Sequences~\cite{OEIS} (i.e.\ sequences A076322 and A076323).

\item We tested $k$-criticality in two independent ways and both methods yielded exactly the same results:
\begin{enumerate}

\item By removing every vertex $v$ once and verifying that $G-v$ is $(k-1)$-colorable and by removing all possible non-empty sets of edges $E$ and verifying that every $G-E$ which is $\mathcal H$-free is $(k-1)$-colorable.

\item By collecting all non-$(k-1)$-colorable graphs generated by the program and only outputting the graphs which do not contain any other graph from the list as a subgraph.
\end{enumerate}

\item We determined all $k$-vertex-critical graphs in two independent ways which both yielded exactly the same results:

\begin{enumerate}
\item By modifying line~\ref{line:k-critical} of Algorithm~\ref{algo:construct} so it tests for $k$-vertex-criticality instead of $k$-criticality.

\item By recursively adding edges in all possible ways to the set of $k$-critical graphs (as long as the graphs remain $k$-vertex-critical) and  testing if the resulting graphs are $\mathcal H$-free. 

\end{enumerate}

\end{itemize}

\section*{Appendix 2: Adjacency lists}

This section contains the adjacency lists of various critical $\mathcal H$-free graphs mentioned in this article. These graphs and  the lists of the vertex-critical graphs can also be found on the \textit{House of Graphs}~\cite{hog} at \url{http://hog.grinvin.org/Critical}.

\subsection*{Adjacency lists of 4-critical $(P_7,C_3)$-free graphs}

Below are the adjacency lists of the seven 4-critical $(P_7,C_3)$-free graphs with at most 35 vertices from Conjecture~\ref{conj:N3P7C3}.

\begin{enumerate}
\item \{0 :1 4 6 8; 1 : 0 2 7 9; 2 : 1 3 6 10; 3 : 2 4 8 9; 4 : 0 3 5; 5 : 4 6 7 9 10; 6 : 0 2 5; 7 : 1 5 8; 8 : 0 3 7 10; 9 : 1 3 5; 10 : 2 5 8\}

\item \{0 :1 4 7 11; 1 : 0 2 8 9; 2 : 1 3 7 10; 3 : 2 4 9 11; 4 : 0 3 5 8; 5 : 4 6 9 10; 6 : 5 7 8 11; 7 : 0 2 6 9; 8 : 1 4 6 10; 9 : 1 3 5 7; 10 : 2 5 8 11; 11 : 0 3 6 10\}

\item \{0 :1 4 7 9 11; 1 : 0 2 8 10; 2 : 1 3 7 9; 3 : 2 4 8 11; 4 : 0 3 5 12; 5 : 4 6 9 10; 6 : 5 7 8; 7 : 0 2 6 12; 8 : 1 3 6 9 12; 9 : 0 2 5 8; 10 : 1 5 11 12; 11 : 0 3 10; 12 : 4 7 8 10\}

\item \{0 :1 4 7 9; 1 : 0 2 8 10; 2 : 1 3 7 11; 3 : 2 4 8 9; 4 : 0 3 5 10; 5 : 4 6 8 11; 6 : 5 7 9 10; 7 : 0 2 6 12; 8 : 1 3 5 12; 9 : 0 3 6 11; 10 : 1 4 6 12; 11 : 2 5 9 12; 12 : 7 8 10 11\}

\item \{0 :1 4 7 9 10; 1 : 0 2 8 11 12; 2 : 1 3 7 10 14; 3 : 2 4 8 9 11; 4 : 0 3 5 12 14; 5 : 4 6 8 10; 6 : 5 7 9 11; 7 : 0 2 6 13; 8 : 1 3 5 13; 9 : 0 3 6 12 14; 10 : 0 2 5 11 12; 11 : 1 3 6 10 14; 12 : 1 4 9 10 13; 13 : 7 8 12 14; 14 : 2 4 9 11 13\}

\item \{0 :1 4 7 10; 1 : 0 2 8 9 12; 2 : 1 3 7 11 13; 3 : 2 4 8 9 10; 4 : 0 3 5 12; 5 : 4 6 8 11; 6 : 5 7 9 13; 7 : 0 2 6 14; 8 : 1 3 5 13 14; 9 : 1 3 6 11 14; 10 : 0 3 11 13 14; 11 : 2 5 9 10 12; 12 : 1 4 11 13 14; 13 : 2 6 8 10 12; 14 : 7 8 9 10 12\}

\item \{0 :1 4 7 9 10; 1 : 0 2 8 11 12; 2 : 1 3 7 10 14; 3 : 2 4 8 9 11; 4 : 0 3 5 12 15; 5 : 4 6 8 10 14; 6 : 5 7 9 11 12; 7 : 0 2 6 13 15; 8 : 1 3 5 13 15; 9 : 0 3 6 13 14; 10 : 0 2 5 11 13; 11 : 1 3 6 10 15; 12 : 1 4 6 13 14; 13 : 7 8 9 10 12; 14 : 2 5 9 12 15; 15 : 4 7 8 11 14\}
\end{enumerate}

\subsection*{Adjacency lists of 4-critical $(P_7,C_4)$-free graphs}
Below are the adjacency lists of the seventeen 4-critical $(P_7,C_4)$-free graphs from Theorem~\ref{thm:N3P7Ck}.

\begin{enumerate}
\item \{0 : 1 2 3; 1 : 0 2 3; 2 : 0 1 3; 3 : 0 1 2\}

\item \{0 : 1 2 5; 1 : 0 3 5; 2 : 0 4 5; 3 : 1 4 5; 4 : 2 3 5; 5 : 0 1 2 3 4\}

\item \{0 : 1 2 4; 1 : 0 3 5; 2 : 0 4 6; 3 : 1 5 6; 4 : 0 2 6; 5 : 1 3 6; 6 : 2 3 4 5\}

\item \{0 : 1 2 7; 1 : 0 3 7; 2 : 0 4 7; 3 : 1 5 7; 4 : 2 6 7; 5 : 3 6 7; 6 : 4 5 7; 7 : 0 1 2 3 4 5 6\}

\item \{0 : 1 2 6; 1 : 0 3 5; 2 : 0 4 6 7; 3 : 1 5 7; 4 : 2 5 6; 5 : 1 3 4; 6 : 0 2 4 7; 7 : 2 3 6\}

\item \{0 : 1 2 7; 1 : 0 3 6 8; 2 : 0 4 7; 3 : 1 5 8; 4 : 2 6 8; 5 : 3 7 8; 6 : 1 4 8; 7 : 0 2 5; 8 : 1 3 4 5 6\}

\item \{0 : 1 2 7; 1 : 0 3 6; 2 : 0 4 7; 3 : 1 5 6 8; 4 : 2 6 7; 5 : 3 7 8; 6 : 1 3 4; 7 : 0 2 4 5 8; 8 : 3 5 7\}

\item \{0 : 1 2 8; 1 : 0 3 6 8; 2 : 0 4 8; 3 : 1 5 6; 4 : 2 6 7; 5 : 3 6 7; 6 : 1 3 4 5 7; 7 : 4 5 6; 8 : 0 1 2\}

\item \{0 : 1 2 8; 1 : 0 3 8; 2 : 0 4 6 8; 3 : 1 5 7; 4 : 2 6 9; 5 : 3 7 9; 6 : 2 4 9; 7 : 3 5 9; 8 : 0 1 2; 9 : 4 5 6 7\}

\item \{0 : 1 2 7 9; 1 : 0 3 6 9; 2 : 0 4 7; 3 : 1 5 8 9; 4 : 2 6 8; 5 : 3 7 8; 6 : 1 4 9; 7 : 0 2 5; 8 : 3 4 5; 9 : 0 1 3 6\}

\item \{0 : 1 2 7; 1 : 0 3 6; 2 : 0 4 7 9; 3 : 1 5 8; 4 : 2 6 9; 5 : 3 7 8; 6 : 1 4 9; 7 : 0 2 5 8; 8 : 3 5 7; 9 : 2 4 6\}

\item \{0 : 1 2 7; 1 : 0 3 6 8; 2 : 0 4 7; 3 : 1 5 8; 4 : 2 6 9; 5 : 3 7 8 9; 6 : 1 4 9; 7 : 0 2 5; 8 : 1 3 5; 9 : 4 5 6\}

\item \{0 : 1 2 7; 1 : 0 3 6 8; 2 : 0 4 7 9; 3 : 1 5 8; 4 : 2 6 9; 5 : 3 7 8; 6 : 1 4 9; 7 : 0 2 5; 8 : 1 3 5; 9 : 2 4 6\}

\item \{0 : 1 2 7; 1 : 0 3 6 8 9; 2 : 0 4 7; 3 : 1 5 8; 4 : 2 6 9; 5 : 3 7 8; 6 : 1 4 9; 7 : 0 2 5; 8 : 1 3 5; 9 : 1 4 6\}

\item \{0 : 1 2 7; 1 : 0 3 6 8; 2 : 0 4 7; 3 : 1 5 8; 4 : 2 6 8 9; 5 : 3 7 9; 6 : 1 4 8 9; 7 : 0 2 5; 8 : 1 3 4 6; 9 : 4 5 6\}

\item \{0 : 1 2 7; 1 : 0 3 6 8; 2 : 0 4 7 9; 3 : 1 5 8 9; 4 : 2 6 7 9; 5 : 3 7 8; 6 : 1 4 8; 7 : 0 2 4 5; 8 : 1 3 5 6; 9 : 2 3 4\}

\item \{0 : 1 2 9 10; 1 : 0 3 6 12; 2 : 0 4 7 8; 3 : 1 5 11; 4 : 2 6 11 12; 5 : 3 7 8; 6 : 1 4 12; 7 : 2 5 8; 8 : 2 5 7; 9 : 0 10 11; 10 : 0 9 11; 11 : 3 4 9 10; 12 : 1 4 6\}

\end{enumerate}

\subsection*{Adjacency lists of 4-critical $(P_7,C_5)$-free graphs}

Below are the adjacency lists of the six 4-critical $(P_7,C_5)$-free graphs from Theorem~\ref{thm:N3P7Ck}.

\begin{enumerate}
\item \{0 : 1 2 3; 1 : 0 2 3; 2 : 0 1 3; 3 : 0 1 2\}

\item \{0 : 2 3 4 5; 1 : 3 4 5 6; 2 : 0 4 5 6; 3 : 0 1 5 6; 4 : 0 1 2 6; 5 : 0 1 2 3; 6 : 1 2 3 4\}

\item \{0 : 1 6 7; 1 : 0 2 7; 2 : 1 3 7; 3 : 2 4 7; 4 : 3 5 7; 5 : 4 6 7; 6 : 0 5 7; 7 : 0 1 2 3 4 5 6\}

\item \{0 : 1 6 7; 1 : 0 2 7 8; 2 : 1 3 8; 3 : 2 4 8; 4 : 3 5 8; 5 : 4 6 8; 6 : 0 5 7 8; 7 : 0 1 6; 8 : 1 2 3 4 5 6\}

\item \{0 : 1 6 7; 1 : 0 2 7 9; 2 : 1 3 9; 3 : 2 4 8 9; 4 : 3 5 8; 5 : 4 6 8; 6 : 0 5 7; 7 : 0 1 6; 8 : 3 4 5; 9 : 1 2 3\}

\item \{0 : 1 6 7; 1 : 0 2 7 8 12; 2 : 1 3 9 12; 3 : 2 4 9; 4 : 3 5 9; 5 : 4 6 8; 6 : 0 5 7 8 12; 7 : 0 1 6; 8 : 1 5 6 10 11; 9 : 2 3 4; 10 : 8 11 12; 11 : 8 10 12; 12 : 1 2 6 10 11\}
\end{enumerate}

\subsection*{Adjacency list of smallest 4-critical $P_{12}$-free graph with girth five}

Below is the adjacency list of the smallest 4-critical $P_{12}$-free graph with girth 5 from  Theorem~\ref{thm:girth}.(b).

\begin{itemize}
\item \{0 : 1 4 8 11 17; 1 : 0 2 13 14 15; 2 : 1 3 12; 3 : 2 4 16 18 19; 4 : 0 3 5 20; 5 : 4 6 14; 6 : 5 7 12 13 17 18; 7 : 6 8 15 19; 8 : 0 7 9; 9 : 8 10 14 18 20; 10 : 9 11 13 16; 11 : 0 10 12 19; 12 : 2 6 11 20; 13 : 1 6 10; 14 : 1 5 9 19; 15 : 1 7 16 20; 16 : 3 10 15 17; 17 : 0 6 16; 18 : 3 6 9; 19 : 3 7 11 14; 20 : 4 9 12 15\}
\end{itemize}

\end{document}